\theoremstyle{plain}
\newtheorem{theorem}{Theorem}[section]
\newtheorem{lemma}[theorem]{Lemma}
\newtheorem{proposition}[theorem]{Proposition}
\newtheorem{corollary}[theorem]{Corollary}
\newtheorem{conjecture}[theorem]{Conjecture}
\theoremstyle{definition}
\newtheorem{definition}[theorem]{Definition}
\newtheorem{remark}[theorem]{Remark}
\newtheorem{remarks}[theorem]{Remarks}
\newtheorem{example}[theorem]{Example}
\DeclareMathOperator{\hyp}{hyp}
\DeclareMathOperator{\length}{length}
\DeclareMathOperator{\area}{area}
\DeclareMathOperator{\inte}{int}
\DeclareMathOperator{\injrad}{injrad}
\newcommand{\piinv}{\pi^{-1}}
\newcommand{\BR}{\mathbb R}
\newcommand{\BN}{\mathbb N}
\newcommand{\BZ}{\mathbb Z}
\newcommand{\mB}{\mathcal{B}}
\newcommand{\mD}{\mathcal{D}}
\newcommand{\mI}{\mathcal{I}}
\newcommand{\mL}{\mathcal{L}}
\newcommand{\mP}{\mathcal{P}}
\newcommand{\mV}{\mathcal{V}}
\theoremstyle{definition}
\def\fnum{equation} 
\newtheorem{Thm}[\fnum]{Theorem}
\newtheorem{Cor}[\fnum]{Corollary}
\newtheorem{Lem}[\fnum]{Lemma}
\newtheorem{Def}[\fnum]{Definition}
\newtheorem{Pro}[\fnum]{Proposition}
\numberwithin{equation}{section}
\newcommand{\nn}{{\bf{n}}}
\newcommand{\Ric}{{\text{Ric}}}
\newcommand{\An}{{\text{An}}}
\newcommand{\diam}{{\text {diam}}}
\newcommand{\dist}{{\text {dist}}}
\def\RR{{\bold R}}
\newcommand{\e}{{\text {e}}}
\newcommand{\Area}{{\text {Area}}}
\newcommand{\cB}{{\mathcal{B}}}
\newcommand{\cL}{{\mathcal{L}}}
\newcommand{\eqr}[1]{(\ref{#1})}
\begin{document}

\title{Effective Finiteness  of irreducible Heegaard splittings of non Haken 3-manifolds}

\author{Tobias Holck Colding}\address{Department of Mathematics\\ Massachusetts Institute of Technology\\Cambridge, MA 02139-4307}

\author{David Gabai}\address{Department of Mathematics\\Princeton
University\\Princeton, NJ 08544}

\thanks{The first author
was partially supported by NSF Grant DMS   
11040934 and NSF FRG grant DMS 
 0854774 and the second by DMS-1006553 and NSF FRG grant DMS-0854969}
 
 \email{colding@math.mit.edu and gabai@math.princeton.edu}

\begin{abstract}  The main result is a short effective proof of Tao Li's theorem that a closed non Haken hyperbolic 3-manifold $N$ has at most finitely many irreducible Heegaard splittings.  
 \end{abstract}

\maketitle

\setcounter{section}{-1}

\section{Introduction}\label{S0}  The long standing classification problem in the theory of Heegaard splittings of 3-manifolds is to exhibit for each closed 3-manifold a complete list, without duplication, of all its irreducible Heegaard splittings, up to isotopy.  

	Fundamental results in this direction were obtained by Klaus Johannson \cite{Jo} and Tao Li \cite{Li1}, \cite{Li2},  \cite{Li3}.  Johannson showed that for Haken 3-manifolds, up to Dehn twisting along tori, for each genus $g$ there are only finitely many isotopy classes of Heegaard surfaces of genus-$g$ and these classes are constructible.  Waldhausen \cite{Wa} conjectured that there are only finitely many irreducible Heegaard surfaces of a fixed genus and this finiteness statement answers a corrected form of Waldhausen's conjecture for Haken 3-manifolds.   For non Haken manifolds, Li's second paper showed that up to isotopy any closed 3-manifold $N$ has at most finitely many irreducible Heegaard splittings, thereby proving a strengthened Waldhausen conjecture.  His first paper did this for splittings of a fixed genus. Li's third paper shows how given $g\in \BN$ to effectively construct a finite list of  closed surfaces of genus-$g$ such that each surface is a Heegaard surface and each Heegaard surface of genus-$g$, up to isotopy, lies in this list.
	
	To solve the classification problem for a non Haken manifold $M$ it therefore suffices to first give an \emph{effective} upper bound for the genus of an irreducible Heegaard splitting of $M$, second to give an algorithm to decide whether or not two irreducible Heegaard splittings are isotopic and third to give an algorithm to decide if a Heegaard splitting is irreducible.  In this paper we solve the first of these problems.  In the sequel \cite{CGK} with Daniel Ketover we solve the second and third.


\begin{theorem}  \label{main} If $N$ is a closed non Haken hyperbolic 3-manifold, then there exists an effectively computable $G(N)$ such that any irreducible Heegaard splitting of $N$ has genus bounded above by $G(N)$. \end{theorem}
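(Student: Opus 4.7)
The plan is to associate to any irreducible Heegaard splitting $\Sigma$ of $N$ of genus $g$ an embedded minimal surface $\Sigma^\star$ of the same genus and controlled Morse index, and then bound $g$ by effective geometric invariants of $N$.

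First, since $N$ is non-Haken, a theorem of Casson--Gordon implies that every irreducible Heegaard splitting is strongly irreducible. The min-max theory of Pitts--Rubinstein, as rigorously implemented by De~Lellis--Pellandini and refined for Heegaard sweepouts by Colding--De~Lellis--Ketover, applied to the natural sweepout of $N$ by parallel copies of $\Sigma$, produces a smooth closed embedded minimal surface $\Sigma^\star\subset N$ of Morse index at most one whose area realizes the min-max width. The alternative possible outcome, a two-fold (possibly branched) cover of an incompressible minimal surface, is ruled out by non-Hakenness, so $\Sigma^\star$ is isotopic to $\Sigma$ and in particular has genus $g$.

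Second, since $N$ has sectional curvature $-1$, the Gauss equation together with Gauss--Bonnet applied to $\Sigma^\star$ gives
\[
\text{Area}(\Sigma^\star)+\tfrac12\int_{\Sigma^\star}|A|^2\,dA=4\pi(g-1),
\]
so both the area and the $L^2$-norm of the second fundamental form are controlled by $g$. Combined with the Margulis thick-thin decomposition (with explicit constants), the problem reduces to the following question: how large can $g$ be, given that $\Sigma^\star$ is an embedded minimal surface of index at most one in the fixed closed hyperbolic non-Haken $3$-manifold $N$? If $g$ were very large, one applies the compactness theory for bounded-index minimal surfaces (Colding--Minicozzi and its descendants) to extract either a limit measured lamination whose support would contain an essential subsurface of $N$, contradicting non-Hakenness, or a curvature blow-up inside a Margulis tube, which is incompatible with index one and the solid-torus topology of the tube. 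Making each step quantitative in terms of $\injrad(N)$, the Margulis constant, and $\Vol(N)$ yields an explicit bound $g\le G(N)$.

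The main obstacle is the effectivization of the last step: converting the qualitative non-Haken hypothesis into an explicit quantitative geometric obstruction. Without non-Hakenness, index-one minimal surfaces of arbitrarily high genus can exist (for instance as double covers of high-genus incompressible surfaces), so non-Hakenness must be turned into a concrete constant via a careful analysis of how a hypothetical large-genus index-one minimal surface would be forced to spread through the thick part and wrap inside Margulis tubes. Threading explicit constants from the hyperbolic structure of $N$ through the Colding--Minicozzi compactness arguments, and ruling out lamination limits in an \emph{effective} way, is where the bulk of the technical work will lie.
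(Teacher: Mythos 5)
Your first two steps are right and match the paper: strong irreducibility via Casson--Gordon, then the Pitts--Rubinstein realization of a strongly irreducible Heegaard surface by an index-$\le 1$ minimal surface (or the double-cover-with-tube alternative). After that, however, your plan diverges and contains genuine gaps.

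First, the double-cover alternative is \emph{not} ruled out by non-Hakenness. What Pitts--Rubinstein produce in that case is an index-$0$ (stable) \emph{one-sided} surface, which need not be incompressible; the paper must handle this case explicitly (see Lemma~\ref{1-sided} and the ``associated $1$-sided surface'' arguments in the proof of Theorem~\ref{technical}), and it is one of the two live branches throughout Section~6. You cannot discard it.

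Second, and more seriously, your proposed dichotomy for the large-genus case --- a limit measured lamination containing an essential subsurface, or curvature blow-up in a Margulis tube --- is not correct and would not close the argument. A limit of index-$\le 1$ minimal surfaces is a minimal lamination (Corollary~\ref{c:lamination}), but its leaves need not be compact, need not be incompressible, and need not yield an essential surface; non-Haken hyperbolic manifolds carry plenty of laminations with dense non-compact leaves, so ``support contains an essential subsurface'' does not follow. And since $N$ is closed and the index is bounded, curvature does not blow up at all: the whole point of Theorem~\ref{t:mainindex} is that index-one surfaces have \emph{uniformly controlled} local geometry (catenoidal neck plus two stable graphical sheets), so nothing singular happens in a Margulis tube. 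The real content of the paper is an entirely different mechanism: Theorem~\ref{negative branched} packages all index-$\le 1$ surfaces into finitely many effectively constructible negatively curved branched surfaces; Theorem~\ref{plante} (Plante-style polynomial growth) plus negative curvature then gives an effective upper bound on the injectivity radius at every point of any carried surface (Corollary~\ref{injectivity}); this is used in Section~5 to split into horizontally large branched surfaces; and the heart of the proof (Theorem~\ref{technical}) shows, by normal-surface/Haken-sum combinatorics on fundamental solutions, that if a carried Heegaard surface has a large coefficient on some fundamental surface then either a subbranched surface carries an incompressible surface (excluded since $N$ is non-Haken) or one can build two disjoint compressing discs --- from monogons of a compressing disc for the subbranched surface, extended by strips of uniformly bounded depth in the interstitial bundle --- exhibiting weak reducibility. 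None of that machinery appears in your outline; ``making each step quantitative via $\injrad(N)$, the Margulis constant, and $\Vol(N)$'' is not a proof of the dichotomy you assert, because the dichotomy itself is not true as stated. You need the branched-surface and weak-reduction argument, not a compactness-with-constants argument.
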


	The proof relies on two technical results.  

\begin{theorem}   \label{negative branched} If $N$ is a complete finite volume  hyperbolic 3-manifold and $\eta>-1$, then there exists finitely many effectively constructible $\eta$-negatively curved branched surfaces $B_1, \cdots, B_n$ such that any index-$\le 1$ closed surface is carried by some $B_i$.\end{theorem}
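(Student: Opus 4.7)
The plan is to exploit the strong local geometric rigidity of index-$\le 1$ minimal surfaces in the hyperbolic $3$-manifold $N$ and to assemble the possible local pictures into finitely many branched surfaces. By the Gauss equation any minimal surface $\Sigma\subset N$ satisfies $K_\Sigma = -1 - \tfrac12|A|^2 \le -1 < \eta$, so any sheet modeled on such a $\Sigma$ is automatically $\eta$-negatively curved. Moreover, Schoen's curvature estimate gives a uniform pointwise bound on $|A|^2$ for a stable minimal surface in a region of bounded geometry, away from its boundary; an index-$\le 1$ surface is stable outside a single nodal domain, so this bound holds off a controlled region. Together these yield $C^{2,\alpha}$-compactness for the possible local pictures.

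First I would fix $\epsilon>0$ effectively computable from $N$ and below the Margulis constant, and decompose $N=\Nthick\cup\Nthin$. The thin part has only finitely many components, each a cusp or a Margulis tube. Cover $\Nthick$ by a finite family $\{B_j\}_{j=1}^m$ of balls of fixed small radius $r\ll\epsilon$ with controlled overlap, all parameters effectively chosen. Inside each $B_j$, select a finite $\delta$-net $\mD_j=\{D_j^1,\ldots,D_j^{k_j}\}$ in the compact $C^2$ moduli space of minimal disks of bounded $|A|^2$: for $\delta$ small enough every sheet of an index-$\le 1$ surface that crosses $B_j$ is $C^1$-close to some $D_j^i$, and a $C^1$-small perturbation of a minimal disk still has curvature $\le \eta$. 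Enumerate similarly a finite family of standard pieces in each cusp cross-section and Margulis tube.

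For each globally consistent selection of local models across overlapping balls and thin-part pieces, I would glue the chosen models together with the standard branched-surface smoothing to obtain a branched surface $B_i$. Every sheet has curvature $\le\eta$, so $B_i$ is $\eta$-negatively curved, and there are only finitely many such selections, yielding a finite effectively constructible list $B_1,\ldots,B_n$. Any index-$\le 1$ closed surface $\Sigma\subset N$ is locally approximated by a member of $\mD_j$ in each $B_j$ and by a standard piece in each thin component; these approximations assemble into a globally consistent selection, so $\Sigma$ is carried by the corresponding $B_i$.

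The main obstacle is coordinating the single nodal domain where index-$1$ instability is concentrated with the thin part $\Nthin$: the bad disk may have diameter of order one, and a surface can enter a Margulis tube or cusp cross-section with arbitrarily complicated winding around a short core. Both must be absorbed into additional flexible pieces of each branched surface so that every such local behaviour is captured by some global selection, while keeping the total number of branched surfaces finite. A secondary challenge is full effectiveness: the Margulis constant, Schoen's constant, the covering radius $r$, the net spacing $\delta$, and the resulting branch-locus combinatorics must all be made explicit in terms of $N$ and $\eta$.
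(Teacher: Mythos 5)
Your high-level plan --- cover the (compact part of the) manifold by small balls, build a finite $\delta$-net of local minimal-disc models in each ball, and glue compatible local models into finitely many branched surfaces whose sheets are nearly minimal and hence $\eta$-negatively curved --- matches the skeleton of the paper's proof (Step 1 of the proof of Theorem \ref{eta branched}). However, the two obstacles you flag in your last paragraph are precisely where the paper's substantive content lies, and your proposal leaves both unresolved, so there is a genuine gap rather than an alternative complete argument.

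First, the concentration of the index. Your claim that ``an index-$\le 1$ surface is stable outside a single nodal domain, so this bound holds off a controlled region'' does not give a uniform $|A|^2$ bound. What index one actually yields is that any two disjoint intrinsic balls cannot both be unstable; it does not bound the diameter or the curvature of the unstable region, and $|A|^2$ can still blow up along a degenerating family. The paper closes this gap in \S2.1 by a blow-up analysis (Proposition \ref{p:convergence} and Theorem \ref{t:mainindex}): if the unstable scale degenerates, the rescaled surface converges to a catenoid, and on a larger but still controlled scale the surface consists of two graphs over a punctured disc with sub-linear separation (Lemma \ref{l:sublinear}, Proposition \ref{p:appendixDref}). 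This is then used in Step 2: one cuts out the catenoid neck, caps with two nearly-parallel graphical discs to get a surface with $|A|^2 < E_1+1$ and mean curvature $< 10^{-100}$, carries that by one of the Step-1 branched surfaces via the Addendum, and then re-inserts a small annulus (and a rescaled catenoid sector) into the branched surface to carry the original index-$1$ surface. Without the catenoidal blow-up structure, ``additional flexible pieces'' is not a finite list, because a priori the unstable region could have unbounded complexity.

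Second, the thin part. Your proposal to ``enumerate a finite family of standard pieces in each cusp cross-section'' does not obviously terminate: a cusp is noncompact, and winding around a short core or going deep down a cusp is not bounded in a way visible from local models. The paper sidesteps this entirely in \S2.2 by proving that a closed embedded index-$\le 1$ minimal surface cannot penetrate past a computable horoball level, so that everything happens inside a compact $N'$ and only finitely many chart balls are needed. This is a key reduction your outline omits.

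A smaller but real error: a $C^1$-small perturbation of a minimal disc does \emph{not} keep $K \le \eta$, since curvature involves second derivatives; you need $C^2$ closeness, which is exactly what the paper's gluing lemma (Lemma \ref{gluing}) provides, controlling both $|A|^2$ and the mean curvature of the glued surface.
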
  


By an $\eta$-negatively curved surface $B$ we mean that any smooth disc embedded in $B$ has sectional curvature $<\eta$.   

	In 1972 Joe Plante proved that any leaf of a codimension-1 foliation without holonomy has polynomial growth, Theorem 6.3 \cite{Pl}.  In \S3 we will observe that his argument shows 
	
\begin{theorem} \label{plante} Let $M$ be a Riemannian 3-manifold and $B\subset M$ a branched surface.  There exists an effectively constructible polynomial $p(B)$ such that any leaf of any measured lamination carried by $B$ has growth bounded by $p(B)$.  \end{theorem}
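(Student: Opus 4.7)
The plan is to adapt Plante's original argument for polynomial growth of leaves to the setting of a measured lamination $\mathcal{L}$ carried by the branched surface $B$. The key observation is that the transverse invariant measure $\mu$ forces trivial holonomy on $\supp(\mu)$: a germ of a measure-preserving homeomorphism of a one-dimensional transversal that fixes $p \in \supp(\mu)$ must be the identity on $\supp(\mu)$ near $p$, since it preserves the strictly increasing function $x \mapsto \mu([p,x])$ on the support. Thus each leaf of $\mathcal{L}$ is without holonomy in Plante's sense, so Theorem 6.3 of \cite{Pl} applies.

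I then translate the argument into branched-surface terms. A regular fibered neighborhood $N(B)$ projects onto $B$ along interval fibers; $\mathcal{L}$ sits in $N(B)$ transverse to these fibers, and each leaf $L$ decomposes into plaques (one per sector of $B$ traversed) with uniformly bounded area and diameter depending only on $B \subset M$. Bounding $\area(B_L(x,r))$ therefore reduces to counting plaques in the ball of radius $r$ and multiplying by $\max_i \area(S_i)$.

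For the count, I follow Plante's period argument: the measure $\mu$ yields a period homomorphism on $\pi_1$ of the leaf which factors through $H_1(B;\BR)$, sending loops of length at most $r$ to points of bounded norm in a finitely generated abelian group. Counting lattice points then forces the number of plaques in $B_L(x,r)$ to grow polynomially in $r$, with degree bounded in terms of $H_1(B)$ and ultimately in terms of the finite cell structure of $B$.

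Effectively constructing $p(B)$ then requires assembling explicit constants from: the number and metric sizes of the sectors of $B$, the combinatorics of the branch locus, and a density bound for the image of the period map. Each of these is computable from the finite cellular data of $B$ together with the Riemannian metric on $M$. The main obstacle is making the Plante period argument quantitatively effective in terms of $B$; but since all the relevant combinatorial data is finite, each constant can in principle be bounded explicitly, which is what \S3 carries out.
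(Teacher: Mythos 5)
Your proposal follows the same strategy as the paper's proof: reduce to counting plaques (equivalently, intersections of the leaf with a fixed $I$-fiber through each sector), use the transverse measure to define a period class $[\mu]\in H^1(N(B);\BR)$, observe that distinct intersection points within a leafwise $r$-ball give short closed loops in $N(B)$ with distinct periods and hence distinct homology classes, and finish with a polynomial bound on the number of classes in $H_1(N(B))$ realized by loops of length $\le 2r+1$. Two small cautions: the period homomorphism lives on $\pi_1(N(B))$, not on $\pi_1$ of the leaf (leafwise loops have zero period; the paper's curves $\gamma_j$ are made of two paths in $L$ closed up by a short transverse arc), and the opening appeal to trivial holonomy plus Plante's Theorem~6.3 alone would not give a single polynomial $p(B)$ uniform over all measured laminations carried by $B$ --- that uniformity is exactly what the explicit period/homology counting you sketch afterwards provides.
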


Putting these two results together we obtain

\begin{corollary}  \label{injectivity}  Let $N$ be a complete finite volume hyperbolic 3-manifold, then there exists an effectively computable $r>0$ such that if $S$ is an index-$\le 1$ closed minimal surface, then for every $x\in S, \injrad_x(S)\le r$.\end{corollary}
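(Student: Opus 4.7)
The plan is to combine Theorems \ref{negative branched} and \ref{plante} with the Gauss equation and G\"unther volume comparison. First, I would fix some $\eta \in (-1, 0)$, say $\eta = -1/2$, and invoke Theorem \ref{negative branched} to produce finitely many effectively constructible $\eta$-negatively curved branched surfaces $B_1, \ldots, B_n$ in $N$ carrying every closed index-$\le 1$ surface. Applying Theorem \ref{plante} to each $B_i$ would then provide polynomials $p_1, \ldots, p_n$ such that every leaf of every measured lamination carried by $B_i$ has growth at most $p_i$.

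Next, given a closed index-$\le 1$ minimal surface $S \subset N$, it is carried by some $B_i$, and viewing $S$ as the support of a measured lamination carried by $B_i$ (single leaf, counting transverse measure) would give
$$\area(B_S(x, r)) \le p_i(r) \quad \text{for every } x \in S \text{ and every } r > 0,$$
where $B_S(x, r)$ denotes the intrinsic metric ball in $S$. Since $N$ is hyperbolic and $S$ is minimal, the Gauss equation yields the pointwise curvature bound $K_S = -1 - \tfrac{1}{2}|A|^2 \le -1$, where $A$ is the second fundamental form of $S$. For any $r < \injrad_x(S)$ the ball $B_S(x, r)$ is isometric to a geodesic ball of radius $r$ in the universal cover $\tilde S$, which is simply connected with curvature $\le -1$. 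G\"unther's volume comparison therefore gives
$$\area(B_S(x, r)) \ge 2\pi(\cosh r - 1).$$

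Combining the two inequalities produces $2\pi(\cosh r - 1) \le p_i(r)$ for all $r < \injrad_x(S)$. Since the left hand side grows exponentially and the right hand side only polynomially, this forces $\injrad_x(S) \le R_i$ for some effective $R_i$ depending only on $p_i$; taking $r := \max_i R_i$ then yields the desired effective bound. The main obstacle I anticipate is the bookkeeping in the identification step: confirming that a closed surface carried by $B_i$ does supply a measured lamination in the precise sense needed by Theorem \ref{plante}, and that ``growth'' in that theorem means the intrinsic area of metric balls on the leaf, with constants effective in $B_i$ alone. Once that is settled, the remaining argument is a routine synthesis of the two technical theorems with classical Riemannian comparison geometry.
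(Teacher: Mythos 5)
Your proof is correct and follows essentially the same strategy as the paper: the paper's proof of this corollary (sketched in the proof of Corollary \ref{no annuli}) likewise plays the effective polynomial growth bound from Theorem \ref{plante} off against the exponential growth forced by negative curvature, and notes the $\injrad$ bound follows by comparison. The only cosmetic difference is your choice to derive the curvature bound $K_S\le -1$ from the Gauss equation for minimal surfaces in a hyperbolic manifold, whereas the paper instead leans on the $\eta$-negatively-curved property of the carrying branched surface $B_i$; both yield the required exponential lower bound via volume comparison.
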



It is well known that a complete hyperbolic 3-manifold has only finitely many minimal surfaces of uniformly bounded genus.  For index at most one, this follows computably from normal surface theory and Theorem \ref{negative branched}.  Using Theorem \ref{main} we now have an effective version of the main result of \cite{Li2}.

\begin{corollary}  If $N$ is a closed non Haken hyperbolic 3-manifold, then there exists an effectively computable $H(N)$ such that $N$ has at most $H(N)$ isotopy classes of irreducible Heegaard splittings.\end{corollary}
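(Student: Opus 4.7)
The plan is to use Theorem~\ref{main} to bound the genus of any irreducible Heegaard splitting of $N$ by $G(N)$, then apply Pitts--Rubinstein min-max theory to convert each Heegaard isotopy class into an index-$\le 1$ closed minimal surface of comparable complexity, and finally invoke the remark preceding the Corollary to count these minimal surfaces effectively.

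Since $N$ is non-Haken, every irreducible Heegaard splitting is in fact strongly irreducible: by Casson--Gordon, weak reducibility would force the presence of an essential surface, which is absent from a non-Haken manifold. To a strongly irreducible Heegaard splitting of genus $g \le G(N)$, the Pitts--Rubinstein theorem (as completed by Colding--De~Lellis, and further refined by Ketover) associates an embedded closed minimal surface of index at most one, which is either isotopic to the Heegaard surface itself (the two-sided case, same genus $g$) or is a non-orientable surface whose regular-neighbourhood boundary is isotopic to the Heegaard surface (the one-sided case, non-orientable genus $g+1$ and Euler characteristic $1-g$). In both cases the Euler characteristic of the associated minimal surface is bounded in terms of $G(N)$. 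Moreover, a single such minimal surface can arise in this way from at most two isotopy classes of Heegaard splittings, since an embedded closed surface in $N$ determines the isotopy class of its Heegaard decomposition up to a possible swap of the two handlebodies, and the regular-neighbourhood boundary of a non-orientable surface is itself determined up to isotopy by the non-orientable surface.

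By the remark immediately preceding the Corollary, the number of index-$\le 1$ closed minimal surfaces in $N$ of Euler characteristic bounded in terms of $G(N)$ is finite and effectively computable. Explicitly, Theorem~\ref{negative branched} provides finitely many effectively constructible $\eta$-negatively curved branched surfaces $B_1,\ldots,B_n$ that carry every such minimal surface; the Gauss--Bonnet theorem together with the $\eta < 0$ curvature bound converts the Euler characteristic bound into an effective area bound, hence into an effective bound on the integer transverse weight vector describing each carried surface inside the normal-surface cone of the corresponding $B_i$. Only finitely many such weight vectors remain, and they can be enumerated algorithmically. Multiplying the resulting count by two to account for the at-most-two-to-one correspondence above yields the desired $H(N)$.

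The main obstacle is invoking the min-max correspondence in the second step with the correct control: verifying that irreducibility suffices to apply the theorem in the non-Haken setting, that the resulting minimal surface has index at most one, and that its Euler characteristic is controlled by the Heegaard genus in both the two-sided and the one-sided degeneration cases. Once this correspondence is in place, the effective counting follows from the results already established earlier in the paper.
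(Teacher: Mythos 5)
Your proposal follows the same route the paper indicates: Theorem~\ref{main} bounds the genus, Casson--Gordon upgrades irreducible to strongly irreducible in the non-Haken setting, Pitts--Rubinstein replaces the Heegaard surface (or its one-sided associate) by an index-$\le 1$ minimal surface of controlled Euler characteristic, and Theorem~\ref{negative branched} together with normal-surface theory (Gauss--Bonnet turning the curvature bound into an area bound, hence a bound on the weight vector over the finitely many branched surfaces) yields an effective finite count. One small inaccuracy that does not affect the conclusion: since in the two-sided case the minimal surface is isotopic to the Heegaard surface, and in the one-sided case the tubed double of $\Sigma$ is determined up to isotopy by $\Sigma$, the map from minimal surfaces back to Heegaard isotopy classes is actually at most one-to-one rather than two-to-one, so the factor of two in your final bound is harmless slack.
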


Here is the idea of the proof of Theorem \ref{main}.  By Casson - Gordon \cite{CaGo} any irreducible Heegaard splitting surface $S$ of $N$ is strongly irreducible and as announced by Pitts - Rubinstein \cite{PR} $S$ is either isotopic to an index-$\le 1$ surface or is obtained from the double cover of an index-0 surface by attaching an unknotted tube between the sheets.  Thus by Theorem \ref{negative branched} it suffices to show that any negatively curved branched surface carries only finitely many strongly irreducible Heegaard  surfaces or 1-sided strongly irreducible Heegaard surfaces and that this bound is effectively computable.  Using Theorem \ref{plante} and its Corollary, it follows that any -1/2-negatively curved  branched surface $E$ can be effectively split into a uniform number of negatively curved branched surfaces $B_1, \cdots, B_m$ such that for each $i$, no component of $\partial_h(B_i)$ is either a disc or an annulus.  Furthermore, any closed surface carried by $E$ is carried by some $B_i$.  We will show that  the $B_i$'s can be constructed so that if $B$ is a subbranched surface of $B_i$ that fully carries a surface, then $B$ has the similar property.  Now let $F_1, \cdots, F_q$ be the set of fundamental solutions of $B_i$.  We effectively find a number $p(B_i)$ such that if $S=n_1F_1+\cdots+n_qF_q$, $S$ is a Heegaard surface and for some $i$, $n_i>p(B_i)$, then $S$ is weakly reducible.  The idea is that if $B$ is the subbranched surface that carries all the $F_i$'s with coefficients $>p(B_i)$, then either $B$ is incompressible and hence $N$ is Haken or $\partial_h N(B)$ is compressible.  In the latter case, in the most interesting situation, a compressing disc for $\partial_h N(B)$ decomposes into two monogons that after a small perturbation gives rise to disjoint monogons for $\partial_hN(B_i)$.  These monogons extend to disjoint compressing discs for $S$ one on each side of $S$.  Each such disc consists of two copies of a monogon and a strip that lies in the interstitial bundle of $S$ and which penetrates only a uniformly bounded amount.  It is this bound that enables us to control $p(B_i)$.  \vskip8pt

\noindent\emph{Comparison with Tao Li's work.}  First, Li works without any assumption on  the Riemannian metric of the underlying manifold.   Second, the idea of splitting to eliminate disc regions of $\partial_hN(B_i)$ is central in his papers as is the idea of using incompressible subbranched surfaces to find incompressible surfaces.  Third, he uses the interstitial bundle to find helix-turn-helix bands which are used to build compressing discs.  The main innovation here is the interplay between Theorems \ref{negative branched} and \ref{plante} which effectively bounds the amount of splitting needed to eliminate discs and annuli lying in the horizontal boundary of $N(B)$ and effectively bounds the injectivity radii at each point of a strongly irreducible Heegaard surface.  That in turn controls how deep one must enter the interstitial bundle to find essential annuli, pieces of which are used to build relatively simple compression discs.    Provided $p(B_i)$ is sufficiently large, there is enough depth to construct these compression discs.  \vskip 8pt

This paper is organized as follows.  Basic definitions and some facts are presented in \S1.  Some fundamental results about minimal surfaces in 3-manifolds are established in \S2. Theorem \ref{negative branched} is proved in \S3 and Theorem \ref{plante} is proved in \S4.  In \S5 we show that given an $\eta$-negatively curved branched surface $B$ a uniformly bounded amount (in $\eta$) of splitting is needed to produce a set of \emph{horizontally large} branched surfaces that in the aggregate carry the same surfaces that $B$ does.   The main result is obtained in \S6.    In \S7 we reduce the problem of finding a purely combinatorial proof to a conjecture about branched surfaces.  
Finally, in the appendix (using results announced in \cite{PR}; see also \cite{Ru} and compare with \cite{Ke}) we show that any closed orientable Riemannian 3-manifold $M$ with a strongly irreducible Heegaard splitting, supports a mean convex foliation.



\section{Basic definitions and facts}\label{basic}

\begin{definition} For the basic definitions regarding measured laminations and branched surfaces see \cite{O}. In particular you will find there pictures of local models of a branched surface $B$, definitions of the \emph{branch locus} $b(B)$, \emph{fibered neighborhood} $N(B)$, the \emph{vertical}  and \emph{horizontal}  boundaries $\partial v N(B)$ and $\partial h N(B)$ as well as definitions of \emph{carrying}, \emph{monogon} and \emph{sector}.  We let $\mV(N(B))$ denote the \emph{$I$-fibering} of $N(B)$ and $\pi:N(B)\to B$ the projection contracting fibers to points. We often think of $\pi$ as being defined on all of the ambient manifold $M$.  All measured laminations are assumed to have full support.  All branched surfaces are implicitly assumed to be associated with an immersion in some 3-manifold thereby enabling us to define $N(B)$.  If $B$ is embedded in the 3-manifold $M$, then the \emph{closed complement} $C(N(B))$ is the closure of $M\setminus N(B)$.  If $B$ carries $\lambda$, then a corresponding embedding of $\lambda$ in $N(B)$ gives rise to the \emph{interstitial bundle} $\mI(\lambda)$ well defined up to fiber preserving homeomorphism.  Here $\mI(\lambda)$ is the $I$-bundle obtained by starting with $N(B)\setminus \lambda$, taking the closure with respect to the induced path metric, then restricting to those $I$-fibers with both endpoints in $\lambda$.  We say that $B$ \emph{fully carries} $\lambda$ if it carries $\lambda$ and each sector of fiber of $\mV(N(B))$ hits some leaf.  Given an explicit embedding of $\lambda\subset N(B)$ transverse to $\mV$ we say that a point $x\in \lambda$ is \emph{interior} to $N(B)$ if there exists an interval $J$ of an $I$-fiber such that $J\cap \partial_v N(B)=\emptyset$, $\partial J\subset \lambda$ and $x\in \inte(J)$. \end{definition}  

 \begin{definition}  We say that $D$ is a \emph{compression disc} for the branched surface $B$ if $D$ is the $\pi$-image of an embedded essential compression disc for $\partial_h N(B)$.   \end{definition}
 
 \begin{remark} \label{compressing}  We will view $D$ as both an embedded  disc with boundary in $\partial_h N(B)$ and as an immersed disc with boundary in $B$.  The difference will be clear from context.  Note that if $B$ is a smooth 1-sided surface, then  a compressing disc for $B$ viewed as a branched surface may not  be a compression disc for $B$ viewed as a smooth surface.\end{remark}

\begin{definition} If $B'$ is obtained by splitting $B$, then let $\Phi:\pi_0(\partial_h N(B))\to \pi_0(\partial_h N(B'))$ be the map induced by inclusion.  We say that $B'$ is a \emph{regular splitting} if $\Phi$ is surjective.  If $B$ carries the lamination $\lambda$ and $B'$ also naturally carries $\lambda$, then $B'$ is said to be obtained by $\lambda$-\emph{splitting} $B$.   \end{definition} 

A branched surface chart in a 3-manifold can be given one of two transverse orientations.  Also if $B$ is a branched surface then there is a  homomorphism $f:\pi_1(B)\to \BZ_2$ such that $f[\gamma]=0$ if and only if $\gamma$ is orientation preserving.  Thus we have the well known  

\begin{proposition}  \label{2-fold} Let $B$ be a branched surface embedded in an 3-manifold $M$.  There exists a 2-fold cover $\hat N(B)\to N(B)$ such that the induced $I$-fibering of $\hat N(B)$ is orientable.\end{proposition}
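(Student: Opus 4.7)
The plan is to use the homomorphism $f : \pi_1(B) \to \BZ_2$ described in the paragraph immediately preceding the statement, which records whether a loop in $B$ is orientation preserving with respect to the local transverse orientations of the branched surface charts.

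First I would observe that the fibered neighborhood $N(B)$ deformation retracts onto $B$ by collapsing each $I$-fiber of $\mV(N(B))$ to its midpoint, so the inclusion induces an isomorphism $\pi_1(B) \cong \pi_1(N(B))$. Let $\bar f : \pi_1(N(B)) \to \BZ_2$ denote the homomorphism corresponding to $f$ under this isomorphism. Then I would define $\hat N(B) \to N(B)$ to be the 2-fold cover corresponding to $\ker(\bar f)$; if $\bar f$ is already trivial (so $N(B)$ has an orientable $I$-fibering to begin with), interpret this as the disconnected trivial double cover $N(B) \sqcup N(B)$.

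Next I would verify that the $I$-fibering on $\hat N(B)$ pulled back from $\mV(N(B))$ is orientable. In each branched surface chart of $B$, the two possible transverse orientations determine an orientation of the $I$-fibers over that chart. Pick a basepoint $x_0 \in N(B)$, fix one of the two orientations of the fiber through $x_0$, and propagate it by analytic continuation along paths. By construction of $f$, a loop $\gamma$ based at $x_0$ returns the chosen orientation to itself iff $\bar f([\gamma]) = 0$. Hence the propagation descends to a well-defined continuous orientation of the $I$-fibering precisely on the cover associated with $\ker(\bar f)$, which is $\hat N(B)$.

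The only step requiring a little care is checking that analytic continuation of the local fiber orientation is well-defined along the branching, i.e.\ that the local models of a branched surface given in \cite{O} admit a consistent transverse orientation on each chart and that transition across the branch locus $b(B)$ is compatible. This is a routine verification from the local model but is the main conceptual point behind why $f$ is well-defined, and it is exactly what makes the cover $\hat N(B)$ do what is required.
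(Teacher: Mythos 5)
Your proposal is correct and follows exactly the approach the paper sketches in the paragraph immediately preceding the proposition: build the 2-fold cover from the kernel of the $\BZ_2$-valued orientation homomorphism $f:\pi_1(B)\to\BZ_2$, transferred to $\pi_1(N(B))$ via the fiber-collapsing deformation retraction. The paper leaves this as "well known," and you have filled in the standard covering-space details (including the correct handling of the already-orientable case via the trivial disconnected double cover).
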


Assuming that a space $X$ and its metric are understood from context, then the notation $N(x,r)$ will denote the metric $r$-ball about $x$, otherwise we will use $N_X(x,r)$.  If $H$ is a rectifiable path connected subset of the Riemannian manifold $X$, then $H$ will be given the induced path metric.  If $X$ is a space, then $|X|$ will denote the number of components of $X$.  If it is just a set, then $|X|$ denotes the number of elements.


\begin{Def}
A branched surface is \emph{negatively curved} (resp. 
\emph{c-negatively curved}) if each point has sectional curvature $<0$ 
(resp. $<c<0$).
\end{Def}

\begin{Thm} 
Let $N$ be a complete finite volume hyperbolic $3$-manifold and 
$\epsilon >0$. Then there exists a constructible set $B_1, \cdots, B_n$ 
of $(-1+\epsilon)$-negatively curved branched surfaces such that if $S$ 
is an embedded minimal surface of index $\leq 1$, then $S$ is carried by 
some $B_i$.
\end{Thm}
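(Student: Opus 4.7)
The plan is to build the $B_i$ by a local-to-global discretization of the tangent-plane fields of index-$\le 1$ minimal surfaces, making the $(-1+\epsilon)$-curvature bound nearly automatic via the Gauss equation. For a minimal surface $\Sigma$ in the hyperbolic $3$-manifold $N$, the principal curvatures at each point take the form $\pm\lambda$, so the intrinsic sectional curvature is $K_\Sigma = -1-\lambda^2 \le -1$. Consequently, if every sector of every $B_i$ is built from pieces that are $C^2$-close to actual minimal patches in $N$ (within a tolerance depending on $\epsilon$), then every smoothly embedded disc in $B_i$ will automatically satisfy $K < -1+\epsilon$. The main task is therefore to produce such sectors and glue them into a finite list of branched surfaces that together carry every index-$\le 1$ minimal surface.

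The key analytic input I would use is a curvature estimate for low index minimal surfaces: there exist effective constants $r_0 = r_0(N)>0$ and $C_0 = C_0(N)>0$ such that for any embedded closed minimal surface $\Sigma \subset N$ of index at most one, outside of at most two geodesic balls of radius $r_0$ one has $|A_\Sigma|^2 \le C_0$ pointwise. This follows from the usual point-picking and rescaling argument going back to Schoen: at a point of very large $|A_\Sigma|$, a rescaled sequence produces a complete stable minimal surface in Euclidean space, which combined with the index bound forces concentration to occur at only finitely many (at most $\mathrm{index}+1$) points. Given this estimate, cover the thick part of $N$ by finitely many small geodesic balls $\{U_\alpha\}$ with centers $x_\alpha$ and choose a $\delta$-net of $2$-planes in the Grassmannian $G_2(T_{x_\alpha}N)$ at each $x_\alpha$. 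Outside the concentration balls, $\Sigma \cap U_\alpha$ is a disjoint union of sheets, each a graph over one of the $\delta$-net planes with bounded $C^2$-norm, and so is determined up to $C^1$-proximity by a discrete label. Enumerating compatible labellings across neighboring balls and then smoothing yields a finite list of branched surfaces; adding a finite set of local models for the behavior inside the concentration balls and inside the cusps/Margulis tubes completes the construction of $B_1, \ldots, B_n$.

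The main obstacle will be compatibility along the branching locus. When two sheets from adjacent balls $U_\alpha, U_\beta$ are to be glued, their tangent planes must agree along the intersection arc, which severely restricts the legal combinations of discrete labels and demands a careful enumeration to ensure the output is a genuine branched surface whose smooth sectors still obey the curvature bound. A secondary subtlety is the treatment of the concentration balls, where $|A_\Sigma|^2$ may blow up and one must instead rely on lamination compactness (of Choi--Schoen / Colding--Minicozzi type) to bound the number of topological possibilities, together with the treatment of the thin part of $N$, where Margulis-tube geometry restricts the intersection complexity of any embedded minimal surface. Once these local lists are enumerated, the global family of $B_i$ is produced by a finite combinatorial procedure subject to explicit tangent-plane compatibility inequalities, delivering the effective constructibility required in the theorem.
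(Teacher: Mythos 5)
Your general plan — cover by charts, discretize tangent planes, use the Gauss equation $K_\Sigma = -1 - \lambda^2 \leq -1$ to get the curvature bound for sectors that are nearly minimal, and handle the high-curvature region separately — matches the paper's Step 1 in spirit. But there is a real gap in how you treat the concentration balls, and it is exactly there that the paper's work lies.

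You propose to handle the at-most-two geodesic balls where $|A_\Sigma|^2$ blows up by "lamination compactness of Choi--Schoen / Colding--Minicozzi type to bound the number of topological possibilities." That is not enough. To produce a branched surface whose smooth sectors remain $(-1+\epsilon)$-negatively curved \emph{and} which carries the surface through the concentration region, you need a precise geometric model there, not merely a bound on the number of topological types. The paper supplies exactly this: Theorem \ref{t:mainindex} shows that near the concentration point an index-one surface, after rescaling, is $C^k$-close to a rescaled catenoid, and the two sheets leaving the neck are graphical with small separation. This is what allows the paper to build the branched surfaces $B_i^F$ in Step 2 by splitting a sector of a bounded-curvature branched surface along a disc and inserting a small, explicitly constructed, almost-minimal annulus (a scaled catenoid) into the resulting $D^2 \times I$ complementary region, with the Gluing Lemma \ref{gluing} controlling $|A|^2$ and the mean curvature. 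Without a catenoid (or equivalent) structure theorem, you cannot know that the resulting sector of $B_i$ near the neck has $K < -1+\epsilon$, nor that the finite list of local models is even correct. Also note that the relevant structure theorem for index $\le 1$ yields a \emph{single} catenoid neck, not two concentration balls; your crude "$\mathrm{index}+1$" count reflects a missing step, not a harmless overestimate.

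A second, smaller gap: you propose to add local models "inside the cusps/Margulis tubes." The paper proves (Theorem 2.12 in \S2.2) that closed embedded index-$\le 1$ minimal surfaces do \emph{not} penetrate deeply into the cusps, so the entire construction can be carried out in a compact $N'$ consisting of the thick part plus Margulis tubes; there is nothing to model in the cusps. Finally, for the claimed effective constructibility you cannot rest on compactness arguments (Choi--Schoen, Gromov--Hausdorff limits of laminations) without making them quantitative; the paper flags this explicitly and invokes the effective versions from \cite{CM7}. Your proposal as written would give a non-effective finiteness statement, which is not what the theorem asserts.
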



\section{Minimal surfaces in hyperbolic $3$-manifolds}
\subsection{Index one minimal surfaces in a ball and in a $3$-manifold}  \label{s:sindex}

We will state the main result of this subsection below as a result about embedded minimal surfaces in a unit ball in Euclidean $3$-space with boundary in the boundary of the ball.   However, with obvious changes this result holds for a ball in a fixed $3$-manifold provided that the radius of the ball is sufficiently small depending on the center of the ball and we will later use this generalization.   In particular, this results holds in both closed and non-compact manifolds as long as the manifold is complete; the only catch is that the radius of the ball where it holds depend on the local geometry and thus in the case of non-compact manifolds may not have a uniform positive lower bound independent of the center.

\vskip2mm
Let $\Gamma$ be a surface in $\RR^3$ possibly with boundary. We will use $\An_r(\gamma)$ to denote the intrinsic tubular neighborhood of radius $r$ about a curve $\gamma$, i.e.,
\begin{align}
\An_r(\gamma) = \{x \in \Gamma| \dist_{\Gamma}(x,\gamma) < r\}\, .
\end{align}
Similarly, we will write $\An_{s,t}(\gamma)$ for the �annulus� $\An_t(\gamma) \setminus \An_s(\gamma)$. Let $B_\epsilon\subset \BR^3$ denote the ball of radius $\epsilon$ centered at the origin.  

\begin{Thm}   \label{t:mainindex}
There exists $\delta$, $c> 0$ such that the following holds:  Given $C>1$, $\mu> 0$, and an integer $k$, there exists $\epsilon> 0$ so that if $\Sigma\subset B_1\subset \RR^3$ is a compact embedded minimal surface with $\partial \Sigma\subset \partial B_1$ and index one and if $B_{\epsilon}\cap \Sigma$ is unstable,  then there exists a simple closed geodesic $\gamma\subset B_{2\epsilon}\cap \Sigma$ of length $\ell$ so that
\begin{enumerate}
\item $\Sigma_{c}\setminus \An_{\delta \ell}(\gamma)$ consists of two graphical annuli of functions with gradient at most one.   \label{e:e1mt} 
\item $\An_{C\ell}(\gamma)$ is $\mu$-$C^k$ close to the corresponding annulus in an rescaled catenoid with neck of length $\ell$.   \label{e:e2mt} 
\end{enumerate}
Here $\Sigma_c$ is the connected component of $B_c\cap \Sigma$ containing $\gamma$.
\end{Thm}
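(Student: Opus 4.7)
The plan is to argue by contradiction and blow-up. I fix universal constants $\delta, c > 0$ determined by the standard unit-neck catenoid (to be chosen at the end). Suppose the conclusion fails for some $C > 1$, $\mu > 0$, $k \in \BN$. Then there is a sequence $\epsilon_j \downarrow 0$ and index-one compact embedded minimal surfaces $\Sigma_j \subset B_1$ with $\partial \Sigma_j \subset \partial B_1$ such that $B_{\epsilon_j}\cap \Sigma_j$ is unstable and at least one of (1), (2) fails for $\Sigma_j$. I want to extract a ``neck'' geodesic $\gamma_j$ inside $B_{2\epsilon_j}$, rescale so that its length is one, pass to a catenoid limit, and use smooth convergence together with stability outside the neck to verify both conclusions.

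\emph{Locating the neck.} Since $\Sigma_j$ has index one and the instability is concentrated in $B_{\epsilon_j}$, the complement $\Sigma_j \setminus B_{\epsilon_j}$ is stable, and some connected component $\Sigma_j^0$ of $B_{\epsilon_j}\cap \Sigma_j$ is itself unstable with free boundary on $\partial B_{\epsilon_j}$. For $\epsilon_j$ small, any minimal disc with boundary on a small sphere is nearly planar and hence stable; therefore $\Sigma_j^0$ has nontrivial $H_1$. Minimizing length in a generator of $H_1(\Sigma_j^0)$ that is represented by a loop inside $B_{\epsilon_j}$ yields a simple closed geodesic $\gamma_j \subset \Sigma_j \cap B_{2\epsilon_j}$ of length $\ell_j$, and $\ell_j \to 0$ as $\epsilon_j \to 0$: otherwise a subsequence has $\ell_j$ bounded below while trapped in a ball of vanishing radius, forcing $\int_{\Sigma_j^0}|A|^2 \to \infty$, which is incompatible with the one-dimensional negative eigenspace.

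\emph{Blow-up and catenoid limit.} Rescale by $\ell_j^{-1}$: the minimal surfaces $\tilde\Sigma_j = \ell_j^{-1}\Sigma_j$ live in $B_{1/\ell_j}\subset \RR^3$ and each contains a simple closed geodesic $\tilde\gamma_j$ of length exactly one at bounded distance from the origin. Outside $B_{\epsilon_j/\ell_j}$ the rescaled surface is stable, so Schoen's curvature estimate together with interior $C^{k+1}$ estimates yields uniformly bounded geometry on fixed compact sets away from $\tilde\gamma_j$; the unit-length closed geodesic supplies a definite length scale through the neck. Standard minimal-surface compactness then produces a smooth subsequential limit $\tilde\Sigma_\infty$, a properly embedded complete minimal surface in $\RR^3$ of index at most one containing a simple closed geodesic of length one. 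By the López--Ros/Schoen classification of properly embedded complete minimal surfaces of finite total curvature and index one, $\tilde\Sigma_\infty$ must be the catenoid of neck length one: planes are excluded by the presence of a closed geodesic, and higher-genus or multi-neck examples are excluded by the index bound.

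\emph{Deducing (1), (2), and main obstacles.} Choose $\delta, c > 0$ once and for all so that on the unit-neck catenoid $\mathcal{K}$ the set $(B_c \cap \mathcal{K})\setminus \An_\delta(\textrm{neck})$ consists of two graphs of gradient $\leq 1/2$; this is possible by the asymptotic planarity of the catenoid ends. Given $(C,\mu,k)$, smooth $C^{k+1}$-convergence $\tilde\Sigma_j \to \tilde\Sigma_\infty$ on the compact set $\overline{\An_{2C}(\tilde\gamma_\infty)} \cup (B_{2c}\cap \tilde\Sigma_\infty)$ implies, for $j$ large, that $\An_C(\tilde\gamma_j)$ is $\mu$-$C^k$ close to the corresponding annulus in $\mathcal{K}$, and that $(B_c \cap \tilde\Sigma_j)\setminus \An_\delta(\tilde\gamma_j)$ consists of two graphs with gradient $\leq 1$. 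Rescaling back by $\ell_j$ yields (2) at scale $C\ell_j$ and (1) at the fixed scale $c$ with neighborhood $\delta\ell_j$ of $\gamma_j$, contradicting the choice of $\Sigma_j$. The main obstacles I expect are (a) producing $\gamma_j$ inside $B_{2\epsilon_j}$ with $\ell_j\to 0$ while controlling the a priori topology of the unstable component $\Sigma_j^0$ (one must confirm that a single neck emerges and that the minimizing curve stays in $B_{2\epsilon_j}$), and (b) pinning down the blow-up limit as exactly the catenoid, which requires the index-one hypothesis together with finite-total-curvature rigidity to rule out planes, higher-genus limits, and degenerations.
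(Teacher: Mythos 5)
Your overall plan (contradiction, extract a neck, blow up, converge to a catenoid via the Cheng--Tysk/L\'opez--Ros classification) matches the paper's strategy for the blow-up step, and your conclusion (2) at scale $C\ell$ is essentially sound once the blow-up limit is pinned down. The paper's blow-up is normalized differently --- it uses the stability radius $s_i = \sup\{r: \cB_r(p)$ is stable for all $p\in B_{1/2}\cap\Sigma_i\}$ as the rescaling factor (Proposition~\ref{p:convergence}), which avoids your preliminary task of producing a short homology-minimizing geodesic $\gamma_j\subset B_{2\epsilon_j}$ before you know the local picture; the geodesic then comes for free from the catenoid limit. Those are minor differences in the entry to the argument.

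The genuine gap is in your deduction of conclusion~(1). You rescale by $\ell_j^{-1}$ and then claim that ``smooth $C^{k+1}$-convergence $\tilde\Sigma_j\to\tilde\Sigma_\infty$ on the compact set $\dots\cup(B_{2c}\cap\tilde\Sigma_\infty)$'' gives that $(B_c\cap\tilde\Sigma_j)\setminus\An_\delta(\tilde\gamma_j)$ is two graphs, and that ``rescaling back by $\ell_j$ yields~(1) at the fixed scale $c$.'' But $c$ in the statement is a \emph{fixed} universal constant, while $\ell_j\to 0$, so in the $\ell_j^{-1}$-rescaled picture the ball $B_c$ becomes $B_{c/\ell_j}$ whose radius diverges. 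Smooth convergence to the catenoid on compact sets of $\RR^3$ gives no control there; unwinding the rescaling gives you graphical structure only out to a scale that is $O(\ell_j)$, not out to $\Sigma_c$. In other words, conclusion~(1) is a genuinely multi-scale statement: it says the surface is two-sheeted and graphical from the neck (radius $\delta\ell$) all the way out to the fixed radius $c$, across a range of scales that becomes unbounded as $\ell\to 0$. A single blow-up cannot see that. The paper bridges the scales with a separate stability argument: away from the neck, $\Sigma$ is stable, and Proposition~\ref{p:appendixD} (Proposition D.2 of \cite{CM4}, graphicality of stable annuli away from their boundary) applied at the \emph{original} scale, together with Lemma~\ref{l:sublinear} (sub-linear growth of the separation between the two minimal graphs) and Lemma~\ref{l:connected} (which converts small separation into a lower bound on intrinsic distance between the inner boundary components so that Proposition~\ref{p:appendixDref} applies), propagates the two-sheeted graphical structure outward to $B_c$. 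None of this appears in your proposal, and without it, conclusion~(1) does not follow from the catenoid limit.
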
 

As a corollary we get that in any $3$-manifold any limit of any sequence of closed embedded index one minimal surfaces is a smooth lamination.  This is the following:

\begin{Cor}   \label{c:lamination}
Let $M^3$ be a complete $3$-manifold and $\Sigma_i\subset M$ a sequence of closed embedded minimal surfaces with index one.  Then a subsequence converges to a smooth minimal lamination $\cL$.  Moreover, at most one leaf of $\cL$ is unstable and if $L$ is an unstable leaf, then it is isolated.  
\end{Cor}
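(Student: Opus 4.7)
The plan is to use the version of Theorem \ref{t:mainindex} valid for small balls in a Riemannian 3-manifold (promised in the opening discussion of Section \ref{s:sindex}) to confine the instability of each $\Sigma_i$ to a single small region, apply standard stable minimal surface compactness elsewhere, and then analyze the local catenoidal model furnished by the theorem to pass to the limit near the exceptional point.

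First, to each $x \in M$ I would assign the radius $\epsilon(x) > 0$ produced by Theorem \ref{t:mainindex} applied at $x$ with some fixed choice of the parameters $C$, $\mu$, $k$, and set
\[
U_i = \{x \in M : B_{\epsilon(x)}(x) \cap \Sigma_i \text{ is unstable}\}.
\]
If $x, y \in U_i$ are such that $B_{\epsilon(x)}(x)$ and $B_{\epsilon(y)}(y)$ are disjoint, then the second variation of area on $\Sigma_i$ admits two linearly independent negative directions with disjoint supports, forcing $\operatorname{index}(\Sigma_i) \geq 2$ and contradicting the hypothesis. Hence $U_i$ has diameter bounded in terms of the local geometry. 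After passing to a subsequence, either $U_i$ is empty for every $i$, or one may pick $x_i \in U_i$ with $x_i \to x_\infty$ for some $x_\infty \in M$.

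Away from the (possibly empty) point $x_\infty$, each $\Sigma_i$ is locally stable on balls of uniform radius, so classical curvature estimates for stable minimal surfaces together with monotonicity-based local area bounds give, on every compact subset of $M \setminus \{x_\infty\}$, smooth subsequential convergence of $\Sigma_i$ to a stable minimal lamination $\cL_0$. If $x_\infty$ does not occur, this $\cL_0$ is the desired $\cL$. Otherwise, apply Theorem \ref{t:mainindex} at $x_i$ to produce a simple closed geodesic $\gamma_i \subset \Sigma_i$ of length $\ell_i$ such that, in a ball of definite radius about $x_i$, the component of $\Sigma_i$ through $\gamma_i$ consists of an annulus $\mu$-$C^k$-close to a rescaled catenoidal neck together with two graphical annular complements of gradient at most one. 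Pass to a further subsequence so that $\ell_i \to \ell_\infty \in [0, \infty)$.

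In the case $\ell_\infty = 0$, the catenoidal neck shrinks to $x_\infty$ and the two graphical annular pieces converge smoothly to a pair of minimal graphs (possibly coinciding) through $x_\infty$, which extend $\cL_0$ across $x_\infty$ to a smooth minimal lamination $\cL$, entirely stable. In the case $\ell_\infty > 0$, the catenoidal piece persists and the limit near $x_\infty$ is a single smooth surface modelled on a catenoid; this contributes a unique unstable leaf $L$ to $\cL$, and the graphical annular pieces, staying at a definite distance from $L$, limit to stable leaves, showing that $L$ is isolated. The main obstacle is the degenerate case $\ell_\infty = 0$: one must check that the two graphical sheets fit together into a smooth lamination across $x_\infty$. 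The quantitative $\mu$-$C^k$ closeness to a catenoid in Theorem \ref{t:mainindex} is tailored precisely for this, so the step reduces to careful bookkeeping of the convergence rather than any essentially new estimate.
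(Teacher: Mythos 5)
Your overall strategy mirrors the paper's: confine the instability of each $\Sigma_i$ to a single point via index-one considerations, get a stable minimal lamination away from that point by curvature estimates, and use Theorem \ref{t:mainindex} near the exceptional point to control the catenoidal neck (the paper discharges this last step by citing the removable singularity theorem from page 119 of \cite{CM1}, which is exactly the degenerate $\ell_\infty = 0$ case you flag as the ``main obstacle''). Two details, however, need repair.

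First, ``monotonicity-based local area bounds'' is not something you have: the hypotheses put no uniform area bound on the $\Sigma_i$, and the monotonicity formula gives lower density bounds, not upper ones. Fortunately you do not need area bounds at all. The curvature estimate for stable minimal surfaces already forces the $\Sigma_i$, on each compact subset of $M\setminus\{x_\infty\}$ and at a fixed small scale, to be a union of graphs with uniformly bounded gradient and higher derivatives; Arzel\`a--Ascoli applied to this collection of graphs produces a limit lamination $\cL_0$ directly, possibly with infinitely many leaves through a given transversal. This is precisely why the statement concludes with a lamination and not a surface, and any appeal to area bounds would be both unavailable and misleading.

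Second, the argument that an unstable leaf $L$ is isolated is wrong as stated. The ``graphical annular pieces'' you invoke are parts of the same surface $\Sigma_i$ that contains the catenoidal neck $\gamma_i$; in the limit they merge onto the two sheets of $L$ emanating from the neck and do not remain at any definite distance from $L$, nor do they produce separate leaves whose stability would have any bearing on isolatedness. The correct mechanism is the positive Jacobi field argument: if $L$ were not isolated one could find a sequence of leaves $L_j$ of $\cL$, disjoint from $L$, converging to $L$; writing $L_j$ as normal graphs $u_j$ over larger and larger portions of $L$ and normalizing, a subsequence of $u_j/\sup u_j$ converges to a positive Jacobi field on $L$, which by Fischer-Colbrie--Schoen forces $L$ to be stable, contradicting the assumption. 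With that substitution, and the first correction, your proof closes.
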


Recall that a leaf $L$ of a lamination $\cL$ of a $3$-manifold $M$ is said to be isolated if for each $x\in L$, there exists an $\epsilon>0$ such that $L$ is the only leaf of $\cL$ that intersect $B_{\epsilon}(x)$.  Note that an isolated leaf can limit into a non-isolated leaf.

\begin{proof}
(of Corollary \ref{c:lamination}).  
A priori such a limit could be smooth away from a single point where the index concentrate however it follows easily from Theorem \ref{t:mainindex} together with a standard removable singularity theorem (cf. page 119 of \cite{CM1}) that this potentially singular point is a removable singularity.
\end{proof}

To prove Theorem \ref{t:mainindex} we will first show a blow up version of it on `the scale' of the index.  In this next result and the lemma that follow 
$0\in\Sigma_i\subset B_{r_i} =B_{r_i} (0)\subset \RR^3$ will be a sequence of compact embedded minimal surfaces with index one satisfying:
\begin{enumerate}
\item $\partial \Sigma_i\subset \partial B_{r_i}$ where $r_i\to \infty$.  \label{e:blowup1}
\item Each intrinsic unit ball $\cB_1(p)$ in each $\Sigma_i$ is stable.  \label{e:blowup2}
\item The intrinsic ball $\cB_2(0)$ in each $\Sigma_i$ is unstable. \label{e:blowup3}
\end{enumerate}

The blow up result is the following:

\begin{Pro}    \label{p:convergence}
If $\Sigma_i\subset B_{r_i} =B_{r_i} (0)\subset \RR^3$ is a sequence of compact embedded index one minimal surfaces satisfying \eqr{e:blowup1}--\eqr{e:blowup3}, 
then a subsequence $\Sigma_j$ converges in the $C^k$ topology on compact subsets of $\RR^3$ to the catenoid for any $k$.
\end{Pro}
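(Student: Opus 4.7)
The plan is to extract a smooth minimal lamination limit $\mathcal L$ of $\RR^3$ from a subsequence of the $\Sigma_i$, and then to identify the leaf through $0$ as a catenoid. First I would apply Schoen's curvature estimate for stable minimal surfaces, using hypothesis \eqr{e:blowup2}, to obtain a uniform pointwise bound $\sup_{\Sigma_i}|A|\leq C$ on the second fundamental form. Together with embeddedness and the uniform local area bounds furnished by the monotonicity formula, the standard compactness theory gives a subsequence $\Sigma_j$ converging in $C^k_{\rm loc}(\RR^3)$ to a smooth minimal lamination $\mathcal L$ of $\RR^3$ with bounded curvature. Let $L_0$ be the leaf of $\mathcal L$ through $0$.

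\medskip
\noindent Next I would transfer the index information to the limit. The instability hypothesis \eqr{e:blowup3} gives a compactly supported test function on $\mathcal B_2(0)\subset \Sigma_j$ with negative Jacobi form, and smooth convergence passes such a test function to $L_0$, proving $L_0$ is unstable. Passing \eqr{e:blowup2} to the limit shows every intrinsic unit ball in every leaf of $\mathcal L$ is stable, and exhausting any non-$L_0$ leaf by such balls shows it is complete and stable in $\RR^3$, hence a flat plane by Fischer-Colbrie--Schoen and do Carmo--Peng. The same exhaustion shows $L_0$ has stable ends and therefore finite Morse index; because the $\Sigma_j$ have index one, the index of $L_0$ is exactly one.

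\medskip
\noindent Bounded curvature of $\mathcal L$ in $\RR^3$ lets me invoke the Meeks--Rosenberg result that leaves of such laminations are properly embedded, so $L_0$ is properly embedded. Combined with finite index, the Fischer--Colbrie theorem gives that $L_0$ has finite total curvature. The classification of complete embedded finite-total-curvature minimal surfaces in $\RR^3$---specifically L\'opez--Ros (genus zero plus finite total curvature forces plane or catenoid), combined with Tysk-type index lower bounds that rule out positive genus once the index equals one---identifies $L_0$ with a catenoid. A final argument using connectedness of the unstable component of $\Sigma_j$, together with the fact that the index is concentrated near $0$, rules out additional planar leaves in $\mathcal L$ (such a leaf would have to be glued to $L_0$ in $\Sigma_j$ or contribute additional index), so $\mathcal L = L_0$ and $\Sigma_j \to L_0 = $ catenoid in $C^k_{\rm loc}(\RR^3)$.

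\medskip
\noindent I expect the main obstacle to be the classification step: identifying a properly embedded, finite total curvature, index one minimal surface in $\RR^3$ as a catenoid requires combining L\'opez--Ros with index estimates and is delicate. Eliminating spurious planar leaves in the limit lamination is also subtle and, in the absence of a connectedness hypothesis, requires a sharp area-concentration argument near $0$ together with the observation that each such planar leaf would be forced to carry its own unstable region in a sufficiently large intrinsic ball, contradicting the global index one hypothesis.
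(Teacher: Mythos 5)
Your overall architecture (extract a lamination limit, identify the leaf through the origin as a catenoid, then rule out extra leaves) is a reasonable alternative to the paper's route, but there are two concrete problems.

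First, the claim that the monotonicity formula furnishes ``uniform local area bounds'' is backwards: monotonicity gives a \emph{lower} bound on the area of a minimal surface passing through a point, not an upper bound. Uniform upper area bounds do not come for free here, and in fact establishing them is the entire technical content of the paper's proof. The paper's argument is: the hypothesis (2) gives curvature bounds on all of $\cB_{s_i}(0)$ by Schoen/Colding--Minicozzi; then, to obtain a genuine properly embedded limit rather than a lamination with possible multiplicity, one \emph{proves} a uniform bound on $\Area(B_R\cap\cB_{s_i}(0))$ for each fixed $R$. This is the substance of Lemma \ref{l:convergence}, which shows that if the area blew up one could find points $x_i$, $y_i$ at wildly different intrinsic scales inside a fixed extrinsic ball, and that the resulting chain of stability-based curvature estimates plus a triangle inequality on intrinsic versus extrinsic distances yields a contradiction. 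Your proposal has no substitute for this step.

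Second, you do correctly route around the missing area bound by invoking lamination compactness (which only needs the curvature bound), Meeks--Rosenberg properness for leaves, Fischer-Colbrie--Schoen/do Carmo--Peng for the stable leaves, and Cheng--Tysk and L\'opez--Ros for the classification of the index-one leaf. But you then acknowledge, without resolving it, the problem of eliminating spurious planar leaves and multiplicity. This is precisely the gap the paper's area bound is designed to close: with the area bound (plus the curvature bound), the convergence is automatically single-sheeted to a properly embedded surface $\Sigma_\infty$, which then has index one and is a catenoid by L\'opez--Ros/Cheng--Tysk. Without it, you must separately argue that the ends cannot shed extra planar leaves with multiplicity, and ``connectedness of the unstable component'' plus ``index concentration'' as stated does not do this; the dangerous degeneration has multiplicity-two convergence on the graphical ends with all instability still concentrated near the neck. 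So the step you flag as ``subtle'' is a genuine gap, and the false monotonicity claim earlier cannot be repaired to paper over it. On the other hand, the step you anticipated as the hardest --- identifying an index-one, finite-total-curvature, properly embedded surface in $\RR^3$ as the catenoid --- is actually dispatched directly by the cited classification results (corollary 1 on page 255 of \cite{CgT}, theorem 2 on page 37 of \cite{LR}) once properness and multiplicity one are in hand.
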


We will use a number of times below that since $\Sigma_i$ has index one and the ball $\cB_2(0)$ is unstable any intrinsic ball in $\Sigma_i$ that is disjoint from $\cB_2(0)$ is stable.\footnote{Within the proof of Proposition \ref{p:convergence} we will show a bit more than what is actually needed (namely, that the limit $\Sigma_{\infty}$ is properly embedded); however we believe that the additional estimates add clarity to the argument.}

\begin{proof}
(of Proposition \ref{p:convergence}.)
Since each unit ball $\cB_1(p)$ in each $\Sigma_i$ is stable for any sequence $s_i\to \infty$ with $r_i\geq 2s_i$ the sequence $\cB_{s_i}(0)\subset \Sigma_i$ has uniformly bounded second fundamental form by \cite{S2}, \cite{CM2}.  We claim that for any fixed $R>0$, there is a uniform bound, independent of $i$, for the area of $B_R\cap \cB_{s_i}(0)$.  Together these two properties gives that a subsequence $\cB_{s_i}(0)$ converges in the $C^k$ topology to a properly embedded minimal surface $\Sigma_{\infty}$ that then is easily is seen to have index one.\footnote{The bound for the second fundamental form gives that locally a subsequence converges smoothly but possibly with multiplicity; the uniform area bound (together with the second fundamental form bound) rules out multiplicity.}  By the classification of index one embedded minimal surfaces in $\RR^3$ (see corollary 1 on page 255 in \cite{CgT} or theorem 2 on page 37 of \cite{LR}) it follows that this limit is the catenoid and from this and since the convergence is in the $C^k$ topology the claim easily follows.

To complete the proof we need therefore only show that for fixed $R>0$, there exists some sequence $s_i$ as above so that there is a uniform bound, independent of $i$, for the area of $B_R\cap \cB_{s_i}(0)$.  This is essentially contained in \cite{CM5} (see lemma II.2.1 there or lemma 8.23 in \cite{CM6}), however, for completeness we include a proof that is adapted to the relatively strong assumptions here\footnote{The same proof that we give below show that as long as the annuli $\cB_{sr_i}(0)\setminus \cB_{R_0}(0)$ are stable for some fixed $R_0>0$ and the surfaces has uniformly bounded curvature, then any limit $\Sigma_{\infty}$ is properly embedded.}.  Suppose therefore that for some $R>0$ fixed and any sequence $s_i$ as above
\begin{align} \label{e:noareabound} 
\Area (B_R\cap \cB_{s_i}(0))\to \infty\, ; 
\end{align}
This easily implies that there exists $x_i$, $y_i\in B_R\cap \Sigma_i$ such that
\begin{align}
d_{\Sigma_i}(0,x_i)&\to \infty\, , \label{e:e1proper}\\
d_{\Sigma_i}(0,x_i)/d_{\Sigma_i}(0,y_i)&\to 0\, , \label{e:e2proper}\\
d_{\Sigma_i}(0,y_i)/r_i&\to 0\, .\label{e:e3proper}
\end{align}
This contradict the next lemma; thus proving the proposition.  
\end{proof}

\begin{Lem}    \label{l:convergence}
Let $\Sigma_i\subset B_{r_i} \subset \RR^3$ be a sequence of compact embedded index one minimal surfaces satisfying \eqr{e:blowup1}--\eqr{e:blowup3}.   
Then there are no sequences $x_i$, $y_i\in B_R\cap \Sigma_i$ satisfying \eqr{e:e1proper}--\eqr{e:e3proper}.
\end{Lem}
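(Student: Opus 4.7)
The plan is to argue by contradiction. Suppose sequences $x_i,y_i\in B_R\cap \Sigma_i$ as in the statement exist. Write $d_i=d_{\Sigma_i}(0,x_i)$ and $e_i=d_{\Sigma_i}(0,y_i)$; by hypothesis $d_i\to\infty$, $d_i/e_i\to 0$, and $e_i/r_i\to 0$. The key observation, already used in the proof of Proposition \ref{p:convergence}, is that since $\Sigma_i$ has index one and $\cB_2(0)$ is unstable, every intrinsic ball in $\Sigma_i$ disjoint from $\cB_2(0)$ must be stable.

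First I would use this to produce curvature bounds at $x_i$ and $y_i$. For all sufficiently large $i$, the intrinsic balls $\cB_{d_i/2}(x_i)$ and $\cB_{e_i/2}(y_i)$ avoid $\cB_2(0)$, so Schoen's curvature estimate for stable minimal surfaces \cite{S2} (cf.\ \cite{CM2}) gives
$|A|^2(x_i)\le C/d_i^2\to 0$ and $|A|^2(y_i)\le C/e_i^2\to 0$, and in fact uniform $C^k$ bounds on intrinsic balls of any fixed radius around $x_i,y_i$. The conditions $d_i,e_i\ll r_i$ guarantee that these intrinsic balls lie well away from $\partial\Sigma_i$, so they are genuine interior intrinsic balls. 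Passing to a subsequence, $x_i\to x_\infty$ and $y_i\to y_\infty$ in $\overline{B_R}$. Taking radii $s_i\to\infty$ with $s_i\le d_i/4$, the intrinsic balls $\cB_{s_i}(x_i)\subset \Sigma_i$ converge smoothly on compact subsets of $\RR^3$ to a complete embedded stable minimal surface through $x_\infty$; by Fischer--Colbrie / do Carmo--Peng this limit is a plane $P_x$, and analogously a plane $P_y$ arises at $y_\infty$.

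The contradiction will come from the fact that $x_i$ and $y_i$ lie on the same connected surface $\Sigma_i$, at extrinsic distance at most $2R$ but intrinsic distance $d_{\Sigma_i}(x_i,y_i)\ge e_i-d_i\to\infty$. Combined with the above limits, this forces $\Sigma_i$ (restricted to a neighborhood of $\overline{B_R}$) to accumulate onto a nontrivial minimal lamination $\cL$ whose leaves in $B_R$ are planar, in the sense of the Colding--Minicozzi limit-lamination theory (cf.\ \cite{CM6}). I would then apply the one-sided curvature estimate (\cite{CM4}, see also \cite{CM1}) near a plane leaf of $\cL$: sheets of $\Sigma_i$ sufficiently close to the leaf are graphical over it, hence locally stable. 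A standard monodromy/connectedness argument propagates this graphical stability along $\Sigma_i$, ultimately forcing $\cB_2(0)\subset\Sigma_i$ to be stable for all sufficiently large $i$, contradicting \eqr{e:blowup3}.

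The main obstacle is the last step: rigorously upgrading the intrinsic--extrinsic mismatch between $x_i$ and $y_i$ to the existence of a genuine multi-leaf planar lamination in a neighborhood of $\overline{B_R}$, and then converting accumulation onto such a lamination into stability of $\cB_2(0)$. This is exactly the kind of accumulation phenomenon that the one-sided curvature estimate controls, but one must be careful that the graphical region propagated through $\Sigma_i$ actually reaches and covers $\cB_2(0)$; this is where the smallness $d_i/e_i\to 0$ and the smallness $e_i/r_i\to 0$ are used, since together they ensure that the stable, nearly-flat region of $\Sigma_i$ detected at $x_i$ and $y_i$ is large enough, both intrinsically and extrinsically, to envelop the unstable core.
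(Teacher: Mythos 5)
Your setup — noting that intrinsic balls disjoint from $\cB_2(0)$ are stable, invoking Schoen's estimate to get decaying curvature at $x_i$ and $y_i$, and taking flat-plane limits — matches the start of the paper's proof, with the minor cosmetic difference that you route the flatness through Fischer-Colbrie/do~Carmo-Peng whereas the paper gets it directly from the curvature decay and the disjointness $\cB_{s_i}(x_i)\cap\cB_{s_i}(y_i)=\emptyset$.

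Where you diverge is in the target contradiction, and this is where the gap sits. The paper never tries to upgrade stability to the ball $\cB_2(0)$. Instead it argues metrically: after rescaling by $1/s_i$ the ball $\cB_{3s_i/4}(x_i)$ converges to a flat disc through the origin (using the flatness at $y_i$ and disjointness to control its geometry), and since $0\notin\cB_{3s_i/4}(x_i)$ while $|0-x_i|\leq R$, any intrinsic path from $0$ to $x_i$ must first exit that almost-flat disc through its boundary, forcing $d_{\Sigma_i}(0,x_i)\geq 3s_i/2 - O(R)>s_i$, contradicting $d_{\Sigma_i}(0,x_i)=s_i$. No lamination or one-sided curvature estimate is needed, and the unstable core never has to be reasoned about directly.

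Your plan, by contrast, tries to show $\cB_2(0)$ itself becomes stable. You flag the weak point yourself, and it is a genuine one: the one-sided curvature estimate lets you propagate graphicality between accumulating nearly-flat sheets, but $\cB_2(0)$ is precisely the region where graphicality must fail. In the intended application (Proposition~\ref{p:convergence}) the rescaled surfaces converge to a catenoid, whose neck sits exactly at $\cB_2(0)$ and is not a graph over any plane; so a literal propagation of graphicality all the way to $\cB_2(0)$ would prove more than is true. What your argument actually establishes is sheeting and stability away from the core; turning that into stability of the core itself would require an additional argument that the unstable region is trapped between accumulating sheets, which nothing in the hypotheses forces. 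So as written the last step is a real gap, not just an omitted routine detail, and the paper's metric contradiction is the cleaner route.
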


\begin{proof}
Suppose not; we will obtain a contradiction.  
Set 
\begin{align}
s_i&=d_{\Sigma_j}(0,x_i)\, , \\
t_i&=d_{\Sigma_j}(0,y_i)\, .
\end{align}
Then for $i$ sufficiently large the intrinsic balls
$\cB_{\frac{5s_i}{6}}(x_i)$, $\cB_{\frac{5t_i}{6}}(y_i)$ 
are stable.  In particular, by Schoen's curvature estimate for stable minimal surfaces, \cite{S2}, \cite{CM2}, the balls $\cB_{s_i}(y_i)$ converges to flat balls in a plane (even after rescaling to unit size as $s_i/t_i\to 0$) and  for some constant $C$
\begin{align}  \label{e:apriori}
\sup_{\cB_{\frac{4s_i}{5}}(x_i)}|A|^2\leq C\,s_i^{-2}\, . 
\end{align}
We claim that it follows from this curvature estimate together with the fact that $\cB_{s_i}(x_i)\cap \cB_{s_i}(y_i)=\emptyset$ for $i$ sufficiently large that the rescaled balls $\cB_{\frac{3s_i}{4}}(x_i)$ converges to a flat ball in a plane centered at the origin.  From this and since $0\notin \cB_{\frac{3s_i}{4}}(x_i)$ it follows from the triangle inequality and since intrinsic distances are larger than extrinsic distances that $d_{\Sigma_i}(0,x_i)>s_i$ for $i$ sufficiently large which is the desired contradiction.

It remains to show that a subsequence of the rescaled balls $\cB_{\frac{3s_i}{4}}(x_i)$ converges to a flat ball in a plane centered at the origin.  This however follows from the following four facts:
\begin{itemize}
\item The rescaled balls $\cB_{\frac{3s_i}{4}}(x_i)$ have uniform curvature bounds by \eqr{e:apriori}.
\item The rescaled distance between the centers of the balls $\cB_{s_i}(y_i)$ and $\cB_{\frac{3s_i}{4}}(x_i)$ and $0$ goes to zero as  $x_i$ and $y_i\in B_R$.
\item $\cB_{s_i}(x_i)\cap \cB_{s_i}(y_i)=\emptyset$ for $i$ sufficiently large.
\item A subsequence of the rescaled balls $\cB_{s_i}(y_i)$ converges to flat balls in a plane.   
\end{itemize}
\end{proof}

\vskip2mm
We will also need proposition D.2 of \cite{CM4}.  This proposition will be used to go from the blow-up scale, where the index one minimal surface looks like the catenoid, to almost all the way out to the boundary of the initial ball.
Loosely speaking this proposition from \cite{CM4} asserts that a stable embedded minimal surface with a single interior boundary curve $\gamma$ and an area bound near $\gamma$ is graphical away from its boundary.

\begin{Pro}   \label{p:appendixD}
(Proposition D.2 of \cite{CM4}.)  
Given a constant $C$, there exists $\omega> 1$ so that if $\Gamma \subset B_R$ is a stable embedded minimal surface whose �interior boundary� $\partial \Gamma\setminus \partial B_R$ is a simple closed curve $\gamma\subset B_4$ satisfying
$\Area(\An_2(\gamma)) \leq C$,  then each component of $B_{R/\omega} \cap \Gamma \setminus B_{\omega}$ is a graph with gradient bounded by one.
\end{Pro}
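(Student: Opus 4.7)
\emph{Proof proposal.} The plan is to argue by contradiction, using Schoen's curvature estimate for stable minimal surfaces together with the classical Bernstein theorem (via Fischer--Colbrie--Schoen/do Carmo--Peng) for complete stable minimal surfaces in $\RR^3$. Suppose no such $\omega$ exists. Then there is a sequence of stable embedded minimal surfaces $\Gamma_i \subset B_{R_i}$ with interior boundaries $\gamma_i \subset B_4$ satisfying $\Area(\An_2(\gamma_i))\le C$, together with constants $\omega_i\to\infty$ and points $p_i\in B_{R_i/\omega_i}\cap\Gamma_i\setminus B_{\omega_i}$ whose connected component in $B_{R_i/\omega_i}\cap\Gamma_i\setminus B_{\omega_i}$ fails to be a graph of gradient $\le 1$ over any plane through $p_i$.

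First I would use stability plus Schoen's interior curvature estimate to get, for every $x\in\Gamma_i$,
\begin{equation*}
|A|^2(x)\;\le\;\frac{c_0}{\dist_{\Gamma_i}(x,\partial\Gamma_i)^2}\, .
\end{equation*}
Because $\gamma_i\subset B_4$ and $\partial\Gamma_i\setminus\gamma_i\subset\partial B_{R_i}$, this gives uniform curvature bounds on every compact subset of $B_{R_i/2}\setminus B_5$. Next I would propagate the prescribed area bound near $\gamma_i$ outward: combining the monotonicity formula with the bounded curvature (which controls how fast area can grow along a stable surface) yields a uniform area bound $\Area(B_\rho\cap\Gamma_i)\le C(\rho)$ on every fixed extrinsic ball $B_\rho$, independent of $i$. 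This is the step I expect to be the main obstacle, since one has to make sure that there is no extra sheet coming in from far away, but this is exactly what the embeddedness together with the curvature estimate rules out.

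Now set $\lambda_i=|p_i|$, so $\omega_i\le\lambda_i\le R_i/\omega_i$, and rescale by $1/\lambda_i$. After passing to a subsequence, the rescaled surfaces $\lambda_i^{-1}\Gamma_i$ have locally uniform curvature and area bounds on $\RR^3\setminus\{0\}$ (the boundary $\gamma_i$ has shrunk into the origin), and the marked points $\lambda_i^{-1}p_i$ lie on the unit sphere. By the standard compactness theorem for embedded minimal surfaces with locally bounded area and curvature, a subsequence converges smoothly and with multiplicity one (the embeddedness together with the area bound rules out multiplicity) to a complete embedded stable minimal surface $\Gamma_\infty\subset\RR^3\setminus\{0\}$. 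Because $\Gamma_\infty$ has quadratic area growth and bounded curvature near the origin, the origin is a removable singularity, so $\Gamma_\infty$ extends to a complete stable embedded minimal surface in $\RR^3$, and hence by Fischer--Colbrie--Schoen / do Carmo--Peng is a flat plane.

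Finally, smooth convergence on a neighborhood of $\lambda_i^{-1}p_i$ in the unit sphere shows that for all sufficiently large $i$, the surface $\lambda_i^{-1}\Gamma_i$ near $\lambda_i^{-1}p_i$ is a $C^1$-small graph over the tangent plane of $\Gamma_\infty$ at the limit point. Scaling back, the component of $B_{R_i/\omega_i}\cap\Gamma_i\setminus B_{\omega_i}$ containing $p_i$ is then a graph with gradient $\ll 1$ on a neighborhood of $p_i$ of extrinsic scale comparable to $\lambda_i$, contradicting the choice of $p_i$. This forces the existence of a uniform $\omega$ as claimed.
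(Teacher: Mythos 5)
The paper does not prove this proposition; it is quoted verbatim from Appendix~D of \cite{CM4} and used as a black box, so there is no internal proof to compare against.  Evaluating the sketch on its own merits, the overall skeleton (contradiction, blow-down by $\lambda_i = |p_i|$, pass to a stable limit in $\RR^3\setminus\{0\}$, remove the singularity, invoke the Bernstein-type theorem of Fischer-Colbrie--Schoen / do Carmo--Peng) is the right sort of thing to try, but the step you yourself flag as the main obstacle --- the uniform area bound $\Area(B_\rho\cap\Gamma_i)\le C(\rho)$ on fixed extrinsic balls --- is not established by the argument you give, and the two mechanisms you invoke for it do not work.

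First, the monotonicity formula for a minimal surface gives that $r\mapsto \Area(B_r(x)\cap\Sigma)/r^2$ is \emph{nondecreasing}, so an area bound near $\gamma_i$ controls the area on \emph{smaller} balls, not larger ones.  It provides no mechanism for propagating an upper area bound outward to a fixed $B_\rho$.  Second, the claim that ``embeddedness together with the curvature estimate rules out extra sheets coming in from far away'' is false in general: an arbitrarily large number of nearly parallel graphical sheets is perfectly compatible with embeddedness and with Schoen's curvature bound.  The hypotheses here (a single inner boundary curve, an intrinsic area bound on $\An_2(\gamma)$, and stability) do in fact prevent this, but showing so requires real work --- in \cite{CM4} the area bound near $\gamma$ bounds $\Length(\gamma)$, stability with suitably chosen cutoffs bounds total curvature, Gauss--Bonnet then controls the topology, and one then uses the graphical/multi-graph structure theory and sublinear-separation estimates to see that only a bounded number of sheets can come back.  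None of this is supplied, and without a correct area bound the rest of your compactness argument (multiplicity-one smooth convergence, stability of the limit, removable singularity at the origin) does not get off the ground.

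A secondary, more minor issue is in the contradiction setup: ``the component fails to be a graph of gradient $\le 1$ over any plane through $p_i$'' does not yet pin down $p_i$ as a point witnessing the failure, so smooth graphical convergence near $p_i$ does not immediately contradict it.  This is fixable by taking $p_i$ to be a point where the tangent plane has slope exactly $1$, but as written the final step is also slightly loose.
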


We will need that this proposition has the following extension to the case where the interior boundary is disconnected:

\begin{Pro}  \label{p:appendixDref}
In Proposition \ref{p:appendixD} instead of assuming that the inner boundary is connected it suffices to assume that for each connected component of the inner boundary $\gamma$
\begin{align}
\An_{2R}(\gamma)\cap \partial \Gamma\subset \partial B_R\, .
\end{align}
All other assumptions as well as the conclusion remains the same.
\end{Pro}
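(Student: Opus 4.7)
The plan is to reduce to the single-component Proposition \ref{p:appendixD} by localizing around each connected component of the inner boundary. Let $\gamma_1,\dots,\gamma_n$ denote the components of $\partial\Gamma\setminus\partial B_R$; by assumption each lies in $B_4$. The extension hypothesis $\An_{2R}(\gamma_j)\cap\partial\Gamma\subset\partial B_R$ is to be read as saying that no other component $\gamma_k$ enters the open intrinsic $2R$-neighborhood of $\gamma_j$, so the $\gamma_j$'s are pairwise at intrinsic distance at least $2R$.

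For each $j$, I would set $\Omega_j=\An_{2R}(\gamma_j)\subset\Gamma$. Then $\Omega_j$ is a stable embedded minimal surface (as a subdomain of $\Gamma$) contained in $B_R$, whose unique inner boundary component in $B_4$ is $\gamma_j$, and which inherits the area bound $\Area(\An_2(\gamma_j))\le C$ from the hypothesis. The only way in which $\Omega_j$ fails to literally satisfy the assumptions of Proposition \ref{p:appendixD} is that its intrinsic cutoff at distance $2R$ contributes additional boundary not lying on $\partial B_R$. However, the proof of Proposition \ref{p:appendixD} runs through Schoen's curvature estimate and the area bound to produce graphicality at an extrinsic scale comparable to $\omega$; by construction the relevant computations only involve the surface within an intrinsic neighborhood of $\gamma_j$ of radius a fixed multiple of $\omega$, and once $\omega\ll R$ this neighborhood lies strictly inside $\Omega_j$. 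Hence the artificial cutoff at intrinsic distance $2R$ plays no role, and one concludes that every component of $B_{R/\omega}\cap\Omega_j\setminus B_\omega$ is a graph with gradient at most one.

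To finish, I would check that every component $U$ of $B_{R/\omega}\cap\Gamma\setminus B_\omega$ lies inside some $\Omega_j$. By stability of $\Gamma$ and Schoen's curvature estimate \cite{S2,CM2}, together with the area bound near each $\gamma_j$, the intrinsic diameter of any such $U$ is controlled by $R$ and $\omega$; since $U\subset B_{R/\omega}$ is extrinsically close to the origin and the inner boundaries of $\Gamma$ lie in $B_4$, $U$ must sit inside the intrinsic $2R$-neighborhood of one of the $\gamma_j$'s, i.e.\ inside $\Omega_j$. Combined with the previous paragraph this yields the desired graphicality for $\Gamma$.

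The main obstacle is justifying the claim that the argument of Proposition \ref{p:appendixD} genuinely localizes to an intrinsic neighborhood of $\gamma$ of the claimed radius, so that the artificial boundary of $\Omega_j$ at intrinsic distance $2R$ can be ignored. This is not a conceptual difficulty but an estimate-by-estimate re-reading of \cite[Proposition D.2]{CM4} to verify that no global feature of $\Gamma$ beyond a neighborhood of $\gamma$ of size $\ll R$ is ever invoked; once that is checked, the reduction outlined above goes through verbatim.
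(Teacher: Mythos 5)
The paper's proof of Proposition~\ref{p:appendixDref} is a one-line observation: on re-reading the proof of Proposition~D.2 in \cite{CM4} (cf.\ also Remark~D.14 there) one sees that connectedness of the inner boundary is never actually used; the only thing needed is precisely the intrinsic separation hypothesis $\An_{2R}(\gamma)\cap\partial\Gamma\subset\partial B_R$ for each component. You instead attempt a black-box reduction by restricting to $\Omega_j=\An_{2R}(\gamma_j)$ and feeding each $\Omega_j$ into Proposition~\ref{p:appendixD}. Two concrete problems with that reduction:

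First, you justify ignoring the artificial boundary of $\Omega_j$ by asserting that the proof of Proposition~\ref{p:appendixD} only interrogates the surface ``within an intrinsic neighborhood of $\gamma_j$ of radius a fixed multiple of $\omega$.'' That cannot be right: the conclusion of Proposition~\ref{p:appendixD} asserts graphicality of $B_{R/\omega}\cap\Gamma\setminus B_{\omega}$, a region whose outer extrinsic radius $R/\omega$ is \emph{not} a fixed multiple of $\omega$ but grows linearly in $R$. Whatever localization the argument enjoys must reach out to intrinsic scale comparable to $R/\omega$, not to $C\omega$; so the claim as you state it is false, and the weaker true version (``localizes within intrinsic radius $<2R$'') is essentially the content you would have to establish by re-examining \cite{CM4}, at which point you might as well check directly that connectedness is never used.

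Second, your final step — showing that every component $U$ of $B_{R/\omega}\cap\Gamma\setminus B_{\omega}$ lies inside some $\Omega_j$ — invokes an intrinsic-diameter bound for $U$ that you attribute to stability, Schoen's estimate, and the area bound near the $\gamma_j$'s. But Schoen's estimate gives a pointwise curvature bound, not a diameter bound, and the area bound is only assumed on $\An_2(\gamma_j)$, not on $U$. A stable minimal surface with bounded curvature sitting in a fixed extrinsic ball can have arbitrarily large intrinsic diameter (e.g.\ many nearly-parallel sheets), so the diameter control you need is essentially equivalent to the graphicality you are trying to prove. This makes the step circular as written. The paper's direct re-reading of \cite{CM4} sidesteps both issues entirely, which is why it is preferable here; if you wish to keep the reduction, you must separately rule out large-diameter non-graphical components of $\Gamma$ in $B_{R/\omega}\setminus B_{\omega}$ before localizing, and you must state the localization scale correctly.
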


\begin{proof}
In fact, this assumption was all that was needed in the proof of proposition D.2 in \cite{CM4}; cf. also with remark D.14 in \cite{CM4}.
\end{proof}

Let $D_r$ be the disk in the plane centered at the origin and of radius $r$; with polar coordinates $(\rho, \theta)$ so that $\rho > 0$ and $\theta\in \RR$.  Suppose also that $u$ and $v$ are functions on the annulus $D_R\setminus D_r$.  The separation is the function given by
\begin{align}
w(\rho, \theta) = v(\rho, \theta) - u(\rho, \theta) \, .
\end{align}

The argument given on page 50 in the proof of proposition II.2.12 of \cite{CM3} gives that the separation between two minimal graphs defined over an annulus grows sub-linearly:

\begin{Lem}   \label{l:sublinear}
Given $\alpha>0$,  there exists $\delta>0$ and $k>0$ so that the following holds:

If $u$ and $v$ satisfies the minimal surface equation\footnote{So that the graphs of $u$ and $v$ are minimal surfaces.} on $D_{\e^kR}\setminus D_{\e^{-k} R}$, have gradients bounded by $1/2$, and the separation $w$ satisfies 
$0< w< \delta R$, then for $R< s< 2R$
\begin{align}
\sup_{D_s\setminus D_R} w\leq  \left(\frac{s}{R}\right)^{\alpha}\sup_{\partial D_R} w\, .
\end{align}
\end{Lem}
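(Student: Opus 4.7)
The plan is to view the separation $w = v - u$ as a positive solution of a linear uniformly elliptic divergence-form equation and to bound its growth via an angular Fourier analysis. Subtracting the minimal surface equations for $u$ and $v$ and writing the difference in mean-value form gives $\dv(A\nabla w) = 0$ with $A := \int_0^1 F'(\nabla(tv+(1-t)u))\,dt$ and $F(p) = p/\sqrt{1+|p|^2}$; the bound $|\nabla u|,|\nabla v| \leq 1/2$ puts the eigenvalues of $A$ in a fixed positive range, so $L := \dv(A\nabla\cdot)$ is uniformly elliptic with constants depending only on $1/2$.

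The main step is to pass to logarithmic polar coordinates $(t,\theta) = (\log(\rho/R),\theta)$, in which the annulus $D_{e^kR}\setminus D_{e^{-k}R}$ becomes the fixed cylinder $[-k,k]\times S^1$, and to decompose $w(t,\theta) = w_0(t) + \sum_{n \neq 0} w_n(t)e^{in\theta}$. The zero mode $w_0$ is positive (as the angular average of $w > 0$), bounded above by $\delta R$, and in the Laplace approximation satisfies $w_0(t) = A_0 + B_0 t$; positivity at $t = \pm k$ forces $|B_0| \leq w_0(0)/k$, so $w_0(s) \leq w_0(R)(1 + \log(s/R)/k) \leq w_0(R)(s/R)^{1/k}$, which is at most $w_0(R)(s/R)^\alpha$ once $k \geq 1/\alpha$. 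For the nonzero modes, the cylindrical equation combined with the uniform bound $|w_n| \leq \delta R$ at $t = \pm k$ yields $|w_n(t)| \lesssim \delta R\, e^{-|n|(k-|t|)}$, so the total angular oscillation $\sum_{n \neq 0}|w_n(t)|$ on $[0,\log 2]$ is at most $C\delta R/e^k$, which is negligible for $k$ large. Combining these and using $w_0(R) \geq \sup_{\partial D_R} w - C\delta R/e^k$, one obtains $\sup_{\partial D_s} w \leq (s/R)^\alpha \sup_{\partial D_R} w$ for $s \in (R,2R)$ after absorbing the $O(\delta R/e^k)$ error by choosing $k$ sufficiently large and $\delta$ small; a maximum-principle barrier argument with the super-solution $\sup_{\partial D_R}(w)\cdot(\rho/R)^\alpha$ on each sub-annulus then upgrades this boundary bound to the claimed interior estimate.

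The main obstacle will be carrying out this Fourier-mode analysis for the actual operator $L$ rather than its Laplace approximation: since $A$ is only a bounded (not small) perturbation of the identity, the angular Fourier modes do not decouple exactly. One addresses this by working in conformal coordinates adapted to the graph of $u$, where the linearized operator becomes a conformal Laplacian plus bounded lower-order terms with the same angular spectral structure, and by treating the mode coupling as a lower-order perturbation controlled by the smallness $w < \delta R$. This is the technical content of Proposition II.2.12 of \cite{CM3} and is what forces the joint choice of $k$ large (for the spectral/three-circles gap) and $\delta$ small (for the perturbation analysis to close).
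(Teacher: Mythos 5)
The paper's own ``proof'' of Lemma \ref{l:sublinear} is a single sentence: it attributes the result to the argument on page 50 of the proof of Proposition II.2.12 in \cite{CM3}. Your proposal is in the same spirit but fills in a sketch of the ideas: linearize the difference to a uniformly elliptic divergence-form equation $\dv(A\nabla w)=0$, pass to log-polar coordinates, and run an angular Fourier analysis in which positivity of $w$ over the long cylinder $[-k,k]\times S^1$ forces the zero mode to have slope $O(1/k)$ while the nonzero modes decay exponentially into the interior. You then explicitly defer the hard part --- the failure of the modes to decouple because $A$ is a bounded rather than small perturbation of the identity --- to \cite{CM3}, which is exactly what the paper does. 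So this is essentially the same approach.

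One small quantitative slip in your sketch: you bound the nonzero Fourier coefficients at $t=\pm k$ by the crude estimate $|w_n(\pm k)|\le \delta R$ and claim the resulting $O(\delta R\,e^{-k})$ error can be absorbed into $\sup_{\partial D_R} w$ by taking $\delta$ small. But the hypothesis $0<w<\delta R$ gives no lower bound on $\sup_{\partial D_R} w$ in terms of $\delta R$, so this absorption does not go through as stated. The fix is to observe that positivity of $w$ forces $|w_n(\pm k)|\le w_0(\pm k)$, and the zero-mode analysis (slope $\le w_0(0)/k$) gives $w_0(\pm k)\le 2w_0(0)\lesssim\sup_{\partial D_R}w$; the higher-mode error is then $O(e^{-k})\sup_{\partial D_R}w$, which is genuinely absorbable by choosing $k$ large. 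The role of $\delta$ is instead to control $|\nabla w|\lesssim\delta$ on the middle annulus, which keeps the linearized operator close to the Jacobi operator on the graph of $u$ --- this is where the perturbation argument in \cite{CM3} needs $\delta$ small.
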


We will also need the following elementary lemma:

\begin{Lem}   \label{l:connected}
Given $C$, $\delta>1$, there exists $\epsilon>0$ such if $\Sigma\subset B_1\subset\RR^3$ is a compact embedded minimal surface with 
\begin{itemize}
\item $|A|^2\leq C\,r^{-2}$ and $\partial \Sigma\subset \partial B_1\cup B_{2r}$.   
\item The connected component of $B_{2r}\cap \Sigma$ containing $\partial \Sigma\setminus \partial B_1$ consists of two graphs over an annulus in a plane through the origin of functions with gradients bounded by $1$. 
\end{itemize}
Then the following holds:  If the separation between these two graphs is at most $\epsilon\,r$, then the intrinsic distance between the two inner boundary components of $\Sigma$ is bounded below by $\delta\,r$. 
\end{Lem}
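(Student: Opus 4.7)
The plan is a proof by contradiction using the curvature bound $|A|^2\le C\,r^{-2}$ together with the sublinear-growth estimate Lemma \ref{l:sublinear}. Suppose the conclusion fails: there is a path $\gamma\subset\Sigma$ of length less than $\delta r$ connecting the two inner boundary components despite the separation being at most $\epsilon r$. Because the two graphs are disjoint, each is a graph with gradient at most $1$, and together they form the component of $B_{2r}\cap\Sigma$ containing $\partial\Sigma\setminus\partial B_1$, the path $\gamma$ cannot switch between the two sheets while it stays in $B_{2r}$. It must therefore exit $B_{2r}$ and return via some other portion of $\Sigma$ in which the two graphical sheets coalesce through a ``neck'' region.

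The quantitative heart of the argument combines the two hypotheses. First, the separation function $w=v-u$ between the two graphs satisfies a linearized minimal-surface equation, so iterating Lemma \ref{l:sublinear} from scale to scale yields $w=O(\epsilon r)$ throughout the entire graphical annulus; combined with Proposition \ref{p:appendixDref} this propagates the separation bound into a substantial portion of $B_1\setminus B_{2r}$. Second, the pointwise bound $|A|\le\sqrt C/r$ forces any embedded minimal neck joining the two sheets to have waist radius at least $r/\sqrt C$. A catenoid comparison --- carried out either by the maximum principle applied to $w$ (in the spirit of the proof of Lemma \ref{l:sublinear}) or by a blow-up in which the rescaled neck converges to a standard catenoid --- then shows that at the boundary of the graphical neighborhood of such a neck (where the gradient of each sheet equals $1$), the two sheets are separated by a definite constant times their waist radius, hence by at least $c\,r/\sqrt C$ for some absolute constant $c$.

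Combining these two estimates gives $c\,r/\sqrt C\le O(\epsilon r)$, which fails once $\epsilon$ is chosen smaller than a threshold depending only on $C$. Hence no such neck exists, the two inner boundary components lie in distinct connected components of $\Sigma$, and the intrinsic distance between them is $+\infty$, which in particular exceeds $\delta r$. The dependence of $\epsilon$ on $\delta$ is therefore trivial: a single $\epsilon=\epsilon(C)$ works uniformly in $\delta>1$. The main obstacle I anticipate is formalizing the catenoid comparison --- rigorously showing from bounded curvature and embeddedness alone that the rescaled neck has separation comparable to its waist radius. This will likely require either a careful maximum-principle argument with a catenoid barrier or a compactness argument using the classification of complete embedded minimal surfaces of bounded curvature in $\RR^3$ cited in the proof of Proposition \ref{p:convergence}.
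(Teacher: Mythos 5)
The paper states Lemma~\ref{l:connected} without proof (it is introduced as ``the following elementary lemma''), so there is no paper proof to compare against directly; the proposal has to be judged on its own. It has several genuine gaps.

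\textbf{Circularity with Proposition~\ref{p:appendixDref}.} You invoke Proposition~\ref{p:appendixDref} to ``propagate the separation bound into a substantial portion of $B_1\setminus B_{2r}$.'' But the entire point of Lemma~\ref{l:connected}, in the proof of Theorem~\ref{t:mainindex}, is to verify the hypothesis $\An_{2R}(\gamma)\cap\partial\Gamma\subset\partial B_R$ needed to apply Proposition~\ref{p:appendixDref}. You cannot use that proposition as a tool inside this lemma without assuming what the lemma is meant to deliver. The same problem arises with Lemma~\ref{l:sublinear}: it requires the two sheets to be minimal graphs with small gradient over the annulus on which you want to control the separation, but outside $B_{2r}$ the graphical structure is exactly what has not yet been established.

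\textbf{The disconnection conclusion is overclaimed, and the $\delta$-independence is a red flag.} You conclude that the two inner boundary circles lie in distinct components of $\Sigma$, so the intrinsic distance is $+\infty$, and therefore $\epsilon$ can be taken independent of $\delta$. Neither step is justified. Ruling out a catenoid-like neck \emph{inside the slab over the annulus} does not rule out the two sheets joining through a ``handle'' or neck at some radius outside $B_{2r}$, where the hypothesis gives no control on their separation. Such a bridge can have waist comparable to $r/\sqrt C$ (hence curvature within the allowed bound) provided the sheets have spread far enough apart there; nothing in your argument prevents it from sitting just beyond $B_{2r}$ and producing a short intrinsic path. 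Moreover, the lemma explicitly makes $\epsilon$ depend on $\delta$; if the correct conclusion were disconnection, the statement would not be phrased this way. This mismatch is a strong signal that the intended conclusion is the stated finite lower bound $\delta r$, not infinity, and that $\epsilon$ must shrink as $\delta$ grows.

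\textbf{The quantitative heart is left open.} You flag the catenoid comparison as the step you cannot yet make rigorous, and indeed the claim ``waist $\geq r/\sqrt C$ implies separation at the edge of graphicality $\geq c\,r/\sqrt C$'' is exactly the kind of estimate that needs a real argument (a barrier or a compactness/blow-up as you suggest). What can be made elementary is a weaker version: since the two sheets are graphs with gradient $\leq1$ over the same plane and $\Sigma$ is embedded and (two-sided) oriented, the unit normals on the two sheets point into opposite half-spaces and each makes angle $\leq\pi/4$ with the vertical; along any path in $\Sigma$ between them the normal must turn by at least $\pi/2$, and $|A|\leq\sqrt C/r$ then gives intrinsic length $\geq c\,r/\sqrt C$. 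But this yields a \emph{fixed} multiple of $r$, independent of $\epsilon$, and therefore cannot produce $\delta r$ for arbitrary $\delta>1$. To obtain the $\delta$-dependence one has to exploit the smallness of $\epsilon$ quantitatively --- most plausibly by rescaling by $1/r$, letting $\epsilon\to0$, and arguing that in the limit the two sheets collapse to a single multiplicity-two leaf, which is incompatible with embeddedness together with the existence of a connecting path of length $<\delta$. That compactness step (or its effective version) is the missing idea, and your proposal does not supply it.
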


Lemma \ref{l:sublinear} together with Lemma \ref{l:connected} will be used in the proof of the main theorem of this section to show that the assumption in 
Proposition \ref{p:appendixDref} that the intrinsic distance between inner boundary components is large and thus the proposition applies.

We can now combine the above results to prove the main theorem of this section:

\begin{proof}
(of Theorem \ref{t:mainindex}.)
Suppose not;  then there exists a sequence of compact embedded minimal surfaces $\Sigma_i\subset B_1$ of index one with $\partial \Sigma_i\subset \partial B_1$ and so that $B_{\epsilon_i}\cap \Sigma_i$ is unstable where $\epsilon_i\to 0$.   Moreover, none of the $\Sigma_i$'s has a decomposition as in \eqr{e:e1mt} and \eqr{e:e2mt}.  For each $i$ let $s_i$ be the supremum of all $r>0$ such that the intrinsic balls $\cB_r(p)$ are stable for all $p\in B_{\frac{1}{2}}\cap \Sigma_i$.  It follows that the intrinsic balls $\cB_{s_i}(p)$ are stable for all $p\in B_{\frac{1}{2}}\cap \Sigma_i$ and that $\cB_{2s_i}(p_i)$ is unstable for some $p_i\in B_{\frac{1}{2}}\cap \Sigma_i$.  After translation we may assume that each $p_i$ is the origin in $\RR^3$ and after rescaling by $1/s_i$ we get a sequence of embedded minimal surfaces as in Proposition \ref{p:convergence}.  Claim \eqr{e:e2mt} in the statement of the theorem now follows from Proposition \ref{p:convergence}.   From this \eqr{e:e1mt} follows from Proposition \ref{p:appendixDref} applied to the sequence before translation and rescaling combined with Lemmas \ref{l:sublinear} and \ref{l:connected}.  (Where Lemmas \ref{l:sublinear} and \ref{l:connected} are used to show that Proposition 
\ref{p:appendixDref} applies.)  This is the desired contradiction.  
\end{proof}

\subsection{Index one embedded surfaces in cusps}

Recall that for any finite volume hyperbolic $3$-manifold,  $N^3$,  there exists a compact set $K\subset N$ such that $N\setminus K$ is the union of finitely many ends each of which is of the form $E_k=\Gamma_k\times_{\e^{-r}} (0, \infty)$; that is, a warped product of a half-line with a flat torus $\Gamma_k$, where the wrapping function is $\e^{-r}$.  
(That is, the metric on $E_k$ is $dr^2+\e^{-2r}\,g$, where $g$ is the flat metric on $\Gamma_k$.)  The function $r_k=r$ is the Busemann function to the end.  (Note that with this sign convention of the Busemann function horoballs are super-level sets.)   

For $p\in N$ let $\text{inj}_p$ denote the injectivity radius at $p$ of $N$.    We will use below that it follows, that for each end of $M$, there is a $R_k$ (depending only on the geometry of the flat torus 
$\Gamma_k$), so that the function
\begin{align} \label{e:inj}
p\to \e^{r_k(p)}\, \text{inj}_p
\end{align}
is constant on the horoball $r_k\geq R_k$.  Similarly, on the end $E_k$, there exists a constant $C_k$ so that the entire horosphere $r_k=r_k(p)$ is contained in the ball $B_{C_k\, \text{inj}_p}(p)$.  

In this subsection we will show that in a finite volume hyperbolic $3$-manifold closed embedded index one minimal surfaces does not penetrate deeply into the cusps.    This is the following:

\begin{Thm}
Let $N^3$ be a finite volume hyperbolic $3$-manifold and $x\in N$ a fixed point, then there exists an $R>0$ so that any closed embedded index one minimal surface is contained in the ball $B_R(x)$. \footnote{The same result should hold for a finite volume $3$-manifold with pinched negative sectional curvature.}
\end{Thm}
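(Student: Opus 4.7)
The plan is a rescaling/compactness argument driven by the density of the conjugated cusp lattice. Suppose for contradiction that there is a sequence $\{\Sigma_i\}$ of closed embedded index-one minimal surfaces together with points $p_i \in \Sigma_i$ with $d_N(x,p_i)\to\infty$. Since $N$ is a compact set together with finitely many cusp ends, after a subsequence the $p_i$ lie in a single cusp end $E_k$ with Busemann function $r_k$ satisfying $r_k(p_i)\to\infty$.

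Next I would lift to the universal cover $\BH^3$, realized as the upper half-space $\{(x,y,z):z>0\}$, in which $E_k$ corresponds to a horoball and the cusp subgroup $\Gamma_k \cong \BZ^2$ acts by Euclidean translations from a lattice $\Lambda_k \subset \RR^2$. Pick lifts $\tilde p_i \in \BH^3$ and hyperbolic isometries $\psi_i$ sending $\tilde p_i$ to the basepoint $q_0 = (0,0,1)$. The images $\Sigma_i' := \psi_i(\tilde\Sigma_i)$ are embedded minimal surfaces of index $\le 1$ in $\BH^3$ passing through $q_0$. A direct conjugation computation shows $\Gamma_k$ conjugates into the horospherical lattice $\Lambda_i' := e^{-r_k(p_i)}\Lambda_k$ of $(x,y)$-translations preserving $\Sigma_i'$, and the spacing of $\Lambda_i'$ shrinks to zero. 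By the local version of Corollary \ref{c:lamination} (its proof, via Theorem \ref{t:mainindex} and Schoen's stable curvature estimate, is purely local), a subsequence converges on compact subsets of $\BH^3$ to a smooth minimal lamination $\cL_\infty$ whose support contains $q_0$. By continuity and the density of $\bigcup_i \Lambda_i'$ in $\RR^2$, the lamination $\cL_\infty$ is invariant under every horospherical translation $(x,y,z)\mapsto(x+a,y+b,z)$. The leaf of $\cL_\infty$ through $q_0$ is therefore a horizontal plane $\{z=1\}$, i.e., a horosphere in $\BH^3$; but horospheres in $\BH^3$ have constant mean curvature $\pm 1$ and hence are not minimal, contradicting minimality of leaves of $\cL_\infty$.

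The main obstacle is ensuring that $\cL_\infty$ is genuinely nonempty at $q_0$. If $p_i$ is uniformly away in $\Sigma_i$ from the unstable catenoid-neck region, Schoen's estimate provides uniform curvature bounds near $q_0$ and smooth local convergence through $q_0$ is immediate. If instead the neck accumulates at $p_i$, one replaces $p_i$ by a nearby stable point on $\Sigma_i$ of essentially the same altitude $r_k$ --- possible because Theorem \ref{t:mainindex} bounds the intrinsic diameter of the unstable region --- and reruns the argument at this new point, recovering the density and the horosphere contradiction identically.
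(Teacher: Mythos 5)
Your blow-up-and-limit strategy is a genuinely different route from the paper's. The paper works directly in $N$: it picks $p$ to be the point where the Busemann function $r_k$ attains its maximum on $\Sigma$, splits into three cases according to the size of the stability radius $s_p$ relative to $\text{inj}_p$, rules one case out by a rescaling estimate, and in the remaining two cases shows by following curves that project to parallel lines in a fundamental parallelogram of the flat cusp torus that the entire surface must close up inside the cusp, contradicting the maximum principle. You instead move the deepest point to a fixed basepoint $q_0$ by a hyperbolic isometry, invoke the lamination-limit machinery, use the densifying conjugated cusp lattice to force translation invariance, and contradict minimality of horospheres. Both endgames ultimately appeal to the fact that horoballs are strictly convex.

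There is, however, a genuine gap in the step where you invoke Corollary \ref{c:lamination} (even in its local form) for the normalized lifts $\Sigma_i'$. Corollary \ref{c:lamination} and its proof via Theorem \ref{t:mainindex} hinge on the \emph{index-one} hypothesis, which is what guarantees that away from a single controlled unstable neck the surface is stable and therefore has uniform curvature estimates. The lift $\tilde\Sigma_i\subset\BH^3$ (or its $\psi_i$-image $\Sigma_i'$) is $\pi_1(N)$-invariant and therefore has an entire $\pi_1(N)$-orbit of preimages of the unstable neck of $\Sigma_i$. After normalizing so that $\tilde p_i\mapsto q_0=(0,0,1)$, the conjugated cusp lattice $\Lambda_i'=e^{-r_k(p_i)}\Lambda_k$ has spacing going to zero, and the lattice orbit of those necks can become dense near $q_0$ exactly when $\Sigma_i$'s neck sits near the top of the cusp. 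Your ``main obstacle'' paragraph only addresses whether $p_i$ itself lies in the neck region of $\Sigma_i$; replacing $p_i$ by a nearby stable point of the same altitude does nothing about arbitrarily many lattice translates of the neck accumulating near $q_0$ extrinsically. So you cannot simply quote smooth lamination convergence through $q_0$. (This is precisely the difficulty the paper's case-by-case analysis at a single scale is engineered to avoid.)

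Two smaller points. First, you should choose $p_i$ to \emph{maximize} $r_k$ on $\Sigma_i$ so that $\Sigma_i\subset\{r_k\le r_k(p_i)\}$; after normalization this places the putative limit lamination in $\{z\le 1\}$ with $q_0$ on the top. That constraint is needed: without it there is no reason the leaf through $q_0$ should be flat, and in fact the translation-invariance step alone is not quite conclusive --- a lamination invariant under all horizontal translations has support of the form $\RR^2\times K$, but identifying the leaf through $q_0$ with the plane $\{z=1\}$ (rather than just the support containing it) requires an additional argument. It is cleaner to skip the translation invariance entirely: a minimal leaf contained in $\{z\le 1\}$ and passing through $q_0$ gives an interior maximum for the convex Busemann function $\log z$ along a minimal surface, which the strong maximum principle forbids. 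This is the same endgame the paper uses. Second, your approach would need the lamination-limit statement for noncompact ambient manifolds with surfaces of unbounded area in compact sets (the lifts have infinite area); this can be repaired but should be said.
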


\begin{proof}
We will show the theorem by showing that if a closed embedded index one minimal surface penetrated deeply into a cusp, then the entire minimal surface would have to be contained in the cusp contradicting the maximum principle.

Let $K$, $E_k$, and $r_k$ be as above and suppose that $\Sigma$ is a closed embedded index one minimal surface in $M$ that intersect $E_k$ and let $p\in E_k\cap \Sigma$ be a point where the maximum of $r_k$ restricted to $\Sigma$ achieved.  (Note that since the horoballs $r_k\geq r$ are strictly convex it follows that any closed minimal surface cannot be entirely contained in $E_k$.)  For $q\in \Sigma$ let  $s_q$ be the radius of the largest intrinsic ball $\cB_s(q)$ that is stable.  Fix $\epsilon>0$ small to be determined (depending only on $C_k$ and the constant in \eqr{e:inj}).  We divide the argument into three cases:
\begin{enumerate}
\item $s_p\geq \epsilon^{-1}\, \text{inj}_p$.\label{e:case1}
\item $\epsilon^{-1}\, \text{inj}_p\geq s_p$ and $s_q\geq \epsilon\,  \text{inj}_p$ for all $q\in \cB_{\epsilon\,  \text{inj}_p}(p)$.\label{e:case2}
\item There exists a $q\in \cB_{\epsilon\,  \text{inj}_p}(p)$ such that $\epsilon\, \text{inj}_p\geq s_q$.\label{e:case3}
\end{enumerate}

In all three cases we will use that it follows from the curvature estimate for stable minimal surfaces \cite{S2}, \cite{CM2}, that there exists some constant $C$ so that 
\begin{align}
\sup_{\cB_{s_q/2}(q)}|A|^2\leq C\,s_q^{-2}\, .
\end{align}

We will first see that \eqr{e:case2} cannot happen when $\text{inj}_p$ is sufficiently small (i.e., $r_k(p)$ sufficiently large).  To see this observe first that the two assumptions together implies that there exists some constant $C>1$ so that
\begin{align}  \label{e:aprioriineq}
C^{-1}\,  \left(1+\frac{\epsilon}{4}\right)^{-2}\, \text{inj}_p^{-2}\leq \sup_{\cB_{(1+\epsilon/4)\text{inj}_p}(p)}|A|^2\leq \sup_{\cB_{(1+\epsilon/2)\text{inj}_p}(p)}|A|^2\leq C\,\epsilon^{-2} \,\text{inj}_p^{-2}\, .
\end{align}
Here the upper bound follows from the curvature estimate for stable minimal surfaces and the lower bound follows from the definition of $s_p$.  We will now rescale distances in $E_k$ by  $\text{inj}_p^{-1}$ and use that the minimal surface $\Sigma$ lies in the complement of the (open) horosphere $r_k> r_k(p)$, yet intersect it at $p$.  The rescaling of the horosphere can be made as close to a plane in flat $\RR^3$ as we want provided that $\text{inj}_p$ is sufficiently small.  Finally, the rescaling of the minimal surface has a priori curvature estimates by \eqr{e:aprioriineq} and by the same set of inequalities a point in the set where we have a priori curvature estimates has curvatures bounded away from zero.  This however easily contradict that $\Sigma$ is minimal.  Note that the above argument gives a contradiction for all $\epsilon> 0$ fixed provided that $\text{inj}_p$ is sufficiently small depending on the value of $\epsilon$.

This leaves us with possibilities \eqr{e:case1} and \eqr{e:case3}.  If either of these cases happen for some $\epsilon>0$ sufficiently small, then we will show that the minimal surface would be entirely contained in the cusp, thus giving a contradiction.  

Below $D_k$ will be a fundamental domain for the flat torus $\Gamma_k$ which we may, in the usual way, take to be a parallelogram in the plane with one side parallel to the $x$-axis.

In case \eqr{e:case1} we have that the intrinsic ball $\cB_{s_p/2}(p)$ is locally nearly parallel with the horosphere $r_k=r_k(p)$, lies in the complement of the (open) horoball $r_k> r_k(p)$, yet intersect it at $p$.  Moreover, the radius $s_p$ is large compared to $\text{inj}_p^{-1}$ or rather large compared with the diameter of the horosphere.  
Starting at $p$ follow the surface along a curve on the surface that project to a line that is parallel to one of the two sides of the parallelogram in the fundamental domain.  As the corresponding geodesic in the flat torus come back to itself the curve on the surface $\Sigma$ must either come back above, below, or to itself.   It cannot come back above itself as the maximal value for $r_k$ is achieved at $p$; if it came back below itself, then in the other direction along the closed geodesic in the flat torus the curve on the surface would come back above itself which is also a contradiction.   This implies that any curve that project to a curve that is parallel to one of the two sides of the parallelogram in the plane come back to itself on the minimal surface and thus the entire minimal surface is a torus that is entirely contained in a cusp of the $3$-manifold violating the maximum principle.

In case \eqr{e:case3} we argue similarly, but use Theorem \ref{t:mainindex} to show that near $p$ on scale which is large compared with $\text{inj}_p$ (or equivalently large compared with the diameter of the horosphere through $p$) the minimal surface looks (modulo the deck group of the end) like a scaled catenoid that is almost parallel to the horosphere\footnote{In fact, if one use that the minimal surface lies in the complement of the (open) horoball and intersect it at $p$, then one can get away without using Theorem \ref{t:mainindex}.  This is because under this strong assumption one relatively easily get a weak version of Theorem \ref{t:mainindex} that sufficies.}.  Similarly to 
\eqr{e:case1} above once we have this then it follows that the surface is entirely contained in the cusp which is the desired contradiction.  To make this precise we need to show that modulo the deck group of the end $E_k$ the surface in fact look like a scaled catenoid on a large scaled compared with $\text{inj}_p$ and near $p$.   Theorem \ref{t:mainindex} gives that it looks like a scaled catenoid on a scale that is some fixed fraction of $\text{inj}_p$ to show that it is in fact large compare with $\text{inj}_p$ we use that the sheets away from the neck must be very close together.   We can now use that the minimal surface have a priori curvature bounds away from the neck by stability together with that the extension of the sheets lies in the complement of the horoball to get that modulo the deck group of the end it really does look like a scaled catenoid near $p$ on a scale that is large compared with $\text{inj}_p$.  After that we argue similarly to case \eqr{e:case1} to show that as we follow curves that project to lines parallel to one of the two sides in the parallelogram the curve must come back to itself as also in this case we would get a contradiction if it back back below or above itself.  (Here we avoid curves that go into the small
unstable area where curves does not go straight across).  As in the first case this then implies that the minimal surface is entirely contained in a cusp which violates the maximum principle and is the desired contradiction.
\end{proof}


\section{Branched Surfaces}

Given a complete finite volume hyperbolic 3-manifold $N$, let $N'_{[\epsilon,\infty)}$ denote the union of the $\epsilon$-thick part of $N$ together with the Margulis tubes.   In the last section we showed that exists a uniform constant $\rho$ such that any index-1 surface lies in within $N'_{[\rho,\infty)}$.  In what follows, we denote $N'_{[\rho,\infty)}$ by $N'$, so if $N$ is closed, then $N=N'$.  

By Manning \cite{Ma} if $N$ is hyperbolic there exists an algorithm to construct a totally geodesic triangulation and hence compute a lower bound for the injectivity radius.   In general by Breslin \cite{Br} there exists a triangulation $\Delta$ whose tetrahedra have area and angles uniformly bounded below, where the uniformity constant is a function of the injectivity radius of $N$.


\begin{theorem}  \label{eta branched}Let $N$ be a complete hyperbolic 3-manifold with finite volume and $0<\eta<1$.  Then there exists finitely many effectively constructible generic $(1-\eta)$-negatively curved branched surfaces $B_1, \cdots, B_n$ such that any embedded closed index-$\le 1$ surface $S $ is carried by one of these branched surfaces.  \end{theorem}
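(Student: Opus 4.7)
The plan is to cover $N'$ by a fine effective triangulation, enumerate finitely many \emph{local models} for how an index-$\le 1$ minimal surface can intersect each simplex, and then form the branched surfaces $B_i$ by gluing these local models together in all combinatorially allowed ways. The input geometry is controlled by Breslin's/Manning's effective triangulation of $N'$ with bounded tetrahedron geometry (the constants depending only on the thick-part radius $\rho$), and the local structure of the candidate surfaces is governed by Theorem~\ref{t:mainindex} together with Corollary~\ref{c:lamination}.

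First, given $\eta \in (0,1)$, I refine $\Delta$ so that each simplex $\sigma$ sits inside a geodesic ball $B(\sigma)$ of radius $\epsilon=\epsilon(\eta,\rho)$ small enough that two things hold simultaneously. On one hand, Theorem~\ref{t:mainindex} applies inside $B(\sigma)$ (with its effective constants $C,\mu,k,\delta$), since at scale $\epsilon$ the hyperbolic ball is $C^k$-close to a Euclidean ball. On the other hand, by the Gauss equation $K_{S}=-1-\tfrac{1}{2}|A|^2$ in the $-1$-curvature ambient together with the continuous dependence of curvature on $C^2$-data, any $C^k$-small perturbation of a minimal-surface piece in $B(\sigma)$ retains sectional curvature strictly below $-(1-\eta)$. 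Both requirements are achieved simultaneously by shrinking $\epsilon$ an effective amount.

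Second, I enumerate local models per simplex. By Theorem~\ref{t:mainindex} combined with the index-$\le 1$ hypothesis, in every $B(\sigma)$ except at most one designated ball the surface $S$ is a union of graphs with gradient $\le 1$ over planes through $B(\sigma)$, and in the designated ball $S$ contains a single catenoidal neck of effectively controlled scale with two graphical annuli attached. I fix a $\mu$-net on the Grassmannian of tangent planes in $B(\sigma)$, producing a finite set of \emph{model planes}; in the designated ball I additionally fix a $\mu$-net of catenoid parameters (neck length $\ell$, position in $B(\sigma)$, axis), bounded because of the effective scale range from Theorem~\ref{t:mainindex}. Every graphical sheet of $S$ through $B(\sigma)$ is within $\mu$ of a unique model plane, and every catenoidal neck is within $\mu$ of a unique catenoid model, so the set of subsets of model sheets (plus at most one catenoid model in one chosen designated simplex) yields a finite combinatorial collection.

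Third, I form $B_1,\ldots,B_n$: for each combinatorial choice of local models, one per simplex, I glue the chosen sheets across shared $2$-faces of $\Delta$ whenever the traces match up to the $\mu$-tolerance, then smooth the resulting transverse intersections into branch loci. Because each sheet so constructed is a $C^k$-small perturbation of a genuine minimal-surface piece in the hyperbolic manifold $N$, its sectional curvature is bounded above by $-(1-\eta)$ everywhere away from branch loci; a final generic perturbation of the branch locus preserves this bound (curvature is a $C^2$-open condition) and makes $B_i$ generic in the sense of \cite{O}. By construction every closed embedded index-$\le 1$ minimal surface $S$ realizes one of the enumerated combinatorial patterns and is therefore carried by the corresponding $B_i$. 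The main obstacle will be quantitative bookkeeping at the branch loci: when two model sheets are glued across a $2$-face, their tangent planes can disagree by up to $\mu$, and one must verify that the smoothing step introduces no region where sectional curvature exceeds $-(1-\eta)$. This is handled by choosing $\mu$ small with respect to the curvature excess $\eta$, which in turn forces $\epsilon$ to be chosen small with respect to $\mu$; all the constants are effective via Breslin/Manning and the effective form of Theorem~\ref{t:mainindex}.
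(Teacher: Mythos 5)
Your approach is essentially the same as the paper's: discretize the local geometry at an effective scale (yours via a Breslin/Manning triangulation with nets on the Grassmannian; the paper via a finite collection of nearly-minimal ``model discs'' $\{D_1,\dots,D_k\}$ with a gluing lemma, Lemma~\ref{gluing}), use Schoen's estimate to bound $|A|^2$ in the stable regime and Theorem~\ref{t:mainindex} to confine the index-one contribution to a single catenoidal neck, and then observe via the Gauss equation $K_\Sigma=-1-\tfrac12|A|^2$ that $C^2$-small perturbations of minimal pieces stay $(1-\eta)$-negatively curved. The one organizational difference worth noting is how the catenoid is inserted: the paper runs a two-pass construction, first building the Addendum's branched surfaces for bounded-curvature, small-mean-curvature surfaces (where the two ``cap'' discs $D_1,D_2$ get identified to the same sector $D$), and then for each such $B_i$ and each sector $F$ produces $B_i^F$ by splitting along a small disc $D_F$ and inserting a small annulus/catenoid in the resulting $D^2\times I$ complementary region; you instead enumerate catenoid parameters in a net alongside the planar models and build everything in one pass. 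Both are fine, but be aware that the paper's gluing lemma carries precise hypotheses (graph bound, chord-arc, geodesic-curvature bound on $\partial\Sigma_1$) that make the ``smooth the transverse intersections into branch loci'' step quantitative; in your version the corresponding bookkeeping---in particular that two sheets in adjacent simplices which overlay the same portion of $S$ can disagree by $2\mu$ (not $\mu$) at a shared $2$-face, and that a catenoidal neck straddling a simplex boundary is contained in a single ball $B(\sigma)$ of the cover---needs to be tracked explicitly, but this is a matter of presentation rather than a gap.
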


The proof follows in two steps.
\vskip 8pt
\noindent\emph{Step 1.}  Let $E>0 $ and $\eta>0$.  There are finitely many constructible $-1+\eta$ negatively curved branched surfaces that carry all minimal surfaces in $N'$ with $|A|^2<E$.  

\vskip8pt

\noindent\emph{Proof.}    This routinely follows from standard arguments after recognizing that the bounds on $|A|^2$ and injectivity radius (since $N'$ is compact) imply that any minimal surface $S\subset N'$ is a union of discs of uniform size and geometry.  Further, two such discs that are sufficiently close can be glued together to create a new smooth disc or branched disc (depending on context) such that any disc carried by resulting (branched) surface is nearly minimal and hence is $-1+\eta$ negatively curved, since the ambient manifold N' is hyperbolic.  Here are more details.

Given a lower bound $\delta_0>0$ on injectivity radius, $E>0$, and $\epsilon>0$ there exists computable $r(\delta_0, E)$ and a finite set $\mD=\{D_1, \cdots, D_k\}$ of embedded discs in $N$ of radius $2r$ respectively about points $\{y_1, \cdots, y_k\}$ of mean curvature $<\epsilon/10^{10}$ such that if $x\in  S\subset N'$ is a minimal surface with $|A|^2<E$, then the metric $2r$-ball $B_{2r}(x)$ about $x$ in $S$ is embedded and the subdisc $B_r(x)$ is $\epsilon$-graphical over some $D_i$ where $d(x, y_i)<\epsilon/1000$, i.e. $B_r(x)=\{\exp_z(u(z)\,\nn(z))\, |\, z\in D_i\}$ where $0< u< \epsilon$.   Here we can take $r(\delta_0,E)=\min(\delta_0/8, 1, 1/E)$.

The fundamental gluing lemma is as follows.

\begin{lemma} \label{gluing} Let $N$ be a fixed closed $3$-manifold and $E$ a fixed constant.  Suppose that $\Sigma_1$, $\Sigma_2$ are two embedded minimal surfaces with $|A|^2\leq E$ and that $\Sigma_2$ can be written as a normal graph over $\Sigma_1$; that is, $\Sigma_2=\{\exp_x(u(x)\,\nn(x))\, |\, x\in \Sigma_1\}$ where $0< u< \epsilon$.  Suppose also that the geodesic curvature of $k_g$ of $\partial \Sigma_1$ is bounded by $E$ and $\gamma_1\subset \partial \Sigma_1$ is a component of $\partial\Sigma_1$ parametrized by arclength with the following chord arc bound:

If $x$, $y\in \gamma_1$, then 
\begin{align}
\frac{d_{\gamma_1}(x,y)}{d_N(x,y)}\leq E\, .
\end{align}

Given $\delta>0$ there exists an $\epsilon>0$ depending only on $E$ and $\delta$ so that $\Sigma_1$ and $\Sigma_2$ can be glued together along $\gamma_1$ and so that the glued together surface $\Sigma$ agrees with $\Sigma_1$ and $\Sigma_2$ outside the tubular neighborhood $T_{\delta}(\gamma_1)$.   Moreover, 
\begin{enumerate}
\item The mean curvature $H_{\Sigma}$ of $\Sigma$ satisfies $|H_{\Sigma}|\leq \delta$.\label{e:eqr1}
\item $|A_{\Sigma}|^2\leq E+\delta^2$.\label{e:eqr2}
\end{enumerate}\end{lemma}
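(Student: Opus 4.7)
The plan is to build $\Sigma$ inside $T_\delta(\gamma_1)$ by bending the sheet $\Sigma_2$ down so its boundary coincides with $\gamma_1$, then to estimate the mean curvature and second fundamental form of the bent sheet by Schauder-type regularity for the graph function $u$. Once $\epsilon$ is chosen small relative to $E$ and $\delta$, the bending contributes only $O(\epsilon/\delta^2)$ to the principal curvatures, which is enough.

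I first set up Fermi coordinates $(t,s,z)$ in the ambient $N$ along $\gamma_1$: $t$ is arclength on $\gamma_1$; for each $t$, the curve $s\mapsto\exp_{\gamma_1(t)}(s\,e_1)$ is the unit-speed geodesic in $\Sigma_1$ perpendicular to $\gamma_1$; and $z$ is the signed ambient distance to $\Sigma_1$ along the unit normal $\nn$. The bounds $|A_{\Sigma_i}|^2\le E$ and $k_g(\gamma_1)\le E$, together with the chord-arc hypothesis, make these coordinates well defined and $C^2$-comparable to Euclidean on $\{0\le s\le\delta,\,|z|<2\epsilon\}$ once $\epsilon$ is small. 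In these coordinates $\Sigma_1$ is $\{z=0\}$ and $\Sigma_2$ is the graph $\{z=u(t,s)\}$ with $0<u<\epsilon$. Now pick a smooth cutoff $\chi:[0,\delta]\to[0,1]$ with $\chi\equiv 0$ on $[0,\delta/3]$, $\chi\equiv 1$ on $[2\delta/3,\delta]$, and $|\chi^{(k)}|\le C_k\delta^{-k}$, and define the bent sheet $\Sigma_2^{\mathrm{mod}}$ to be the graph $\{z=\chi(s)u(t,s):\,0\le s\le\delta\}$, glued to $\Sigma_2\setminus T_\delta(\gamma_1)$ at $s=\delta$. Because $\chi(0)=0$, this sheet meets $\Sigma_1$ along $\gamma_1$; outside $T_\delta(\gamma_1)$ it coincides with $\Sigma_2$. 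Set $\Sigma:=\Sigma_1\cup\Sigma_2^{\mathrm{mod}}$. Since $\Sigma_1$ is unchanged and minimal with $|A|^2\le E$, everything reduces to estimating $H$ and $|A|^2$ on the transition strip $\{\delta/3\le s\le 2\delta/3\}$ of $\Sigma_2^{\mathrm{mod}}$.

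The key step is a Schauder-type bound on $u$. Because $\Sigma_1$ is minimal, $u$ satisfies the quasi-linear uniformly elliptic minimal-surface equation $\mathcal M(u)=0$ over $\Sigma_1$, where uniform ellipticity uses the a priori bound on $|\nabla u|$ supplied by the $C^{1,1}$ regularity of $\Sigma_1,\Sigma_2$ coming from $|A|^2\le E$. Interior Schauder estimates, combined with a short boundary extension of $u$ across $\gamma_1$ which is valid by the chord-arc hypothesis, give $|\nabla u|+\delta|\nabla^2 u|\le C(E)\,\epsilon/\delta$ on the transition strip. Writing $\tilde u:=\chi u$ and using $|\chi^{(k)}|\le C_k\delta^{-k}$, the same estimate holds for $\tilde u$. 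The standard formulas for the mean curvature and second fundamental form of a graph, combined with $\mathcal M(u)=0$ to cancel the leading term, then produce
\begin{align*}
|H_{\Sigma_2^{\mathrm{mod}}}|\le C(E)\,\epsilon/\delta^2,\qquad |A_{\Sigma_2^{\mathrm{mod}}}|^2\le E+C(E)\,\epsilon^2/\delta^4,
\end{align*}
both of which fall below $\delta$ and $E+\delta^2$ respectively once $\epsilon$ is small enough depending on $E$ and $\delta$ (concretely $\epsilon\le c(E)\,\delta^3$).

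The main obstacle is the quantitative $C^2$ control of $u$ up to $\gamma_1$. Interior Schauder is standard, but to apply it on the full transition strip I must extend $u$ slightly past $\gamma_1$. This is exactly the place where the chord-arc hypothesis on $\gamma_1$ is used: it ensures that $\gamma_1$ is almost straight in Fermi coordinates on the Schauder scale, so that $u$ admits a quantitative extension with derivative bounds depending only on $E$. Once this regularity is in hand, the estimates above immediately give the conclusion of the lemma.
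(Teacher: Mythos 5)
Your construction is essentially the paper's: the paper takes a cutoff $\zeta$ on $N$ with $\zeta\equiv 1$ on $\gamma_1$ and $\zeta\equiv 0$ outside $T_\delta(\gamma_1)$, and sets $\Sigma_+=\{\exp_x((1-\zeta(x))u(x)\nn(x))\}$; your $\chi$ is exactly $1-\zeta$ written in Fermi coordinates, and your $\Sigma_2^{\mathrm{mod}}$ is $\Sigma_+$. The paper then simply asserts that the unit normal and its derivatives for the modified graph agree with those of $\Sigma_2$ up to a constant (depending on $\delta$) times $\max u\le\epsilon$; you make the underlying derivative estimate on $u$ explicit via elliptic Schauder theory, which is the content the paper leaves implicit. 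So the argument is the same in substance, with your version supplying more detail on the linear estimate for $u$.

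Two small corrections, neither of which affects the conclusion. First, the ``boundary extension of $u$ across $\gamma_1$'' is unnecessary: with your cutoff the transition strip is $\{\delta/3\le s\le 2\delta/3\}$, which lies at intrinsic distance at least $\delta/3$ from $\gamma_1$, so interior Schauder on balls of radius $\sim\delta$ already gives $|\nabla u|+\delta|\nabla^2 u|\le C(E)\epsilon/\delta$ there. The chord-arc and geodesic-curvature hypotheses are what make the Fermi collar $\{0\le s\le\delta\}$ well defined (and embedded), not what justify extending $u$ past the boundary. Second, your second fundamental form bound should read $|A_{\Sigma_2^{\mathrm{mod}}}|^2\le E+C(E)\,\epsilon/\delta^2$ rather than $E+C(E)\,\epsilon^2/\delta^4$: writing $A_{\Sigma_2^{\mathrm{mod}}}=A_{\Sigma_1}+\chi\nabla^2 u+(\text{error})$ with $|A_{\Sigma_1}+\chi\nabla^2 u|\le\sqrt E$ by convexity and $|\text{error}|\le C\epsilon/\delta^2$, the cross term $2\sqrt E\cdot C\epsilon/\delta^2$ dominates the square of the error. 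This only changes the required smallness of $\epsilon$ from $c(E)\delta^3$ to $c(E)\delta^4$, which is still ``depending only on $E$ and $\delta$.''
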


\begin{proof}  Let $\zeta$ be a function on $N$ with $1\geq \zeta\geq 0$ so that $\zeta$ is identically one on $\gamma_1$ and vanishes outside $T_{\delta} (\gamma_1)$.  Consider now the surface $\Sigma_+$ given by
\begin{align}
\Sigma_+&=\left\{\exp_x\left(\left(1-\zeta (x)\right)\, u(x)\,\nn(x)\right)\, |\, x\in \Sigma_1\right\}\,  .
\end{align}
It is clear that $\Sigma_+$ agrees with $\Sigma_2$ outside $T_{\delta} (\gamma_1)$ and that $\Sigma_1$ and $\Sigma_+$ intersect along the curve $\gamma_1$. 

We need to see that \eqr{e:eqr1} and \eqr{e:eqr2} holds for $\Sigma_+$.   When $N$ is Euclidean space equation \eqr{e:eqr1} follows from that the normal $\nn_{\Sigma_+}$ of $\Sigma_+$ and its derivative are equal to that of $\nn_{\Sigma_2}$ up to a term that is a bounded constant times $\max u \leq \epsilon$.  The general manifold case follows similarly as well as \eqr{e:eqr2}.\end{proof}

In a similar manner we have the following  

\vskip 10pt

\noindent\textbf{Addendum.}  Let $E>0 $ and $\eta>0$.  There are finitely many constructible $-1+\eta$ negatively curved branched surfaces that carry all closed embedded surfaces in $N'$ with $|A|^2<E$ and mean curvature $\le 1/10^{100}$.  Furthermore, there exists computable $r_0>0$ and $\epsilon_0>0$ so that if $S\subset N'$ is a closed embedded surface with $|A|^2<E$ that contains discs $D_1$ and $D_2$ with $\diam(D_i)<r_0$ and $D_1$ being $\epsilon_0$-graphical over $D_2$, then $S$ is carried by one of the branched surfaces $B$ such that $D_1$ and $D_2$ are identified with the same smooth embedded disc $D\subset B$.  \qed

\vskip 10pt

\noindent\emph{Step 2.}  General Case.

\vskip 8pt

\noindent\emph{Proof.}  If $S$ is index-$0$, then by Schoen's curvature estimae \cite{S2}, \cite{CM2},  there exists a computable uniform upper bound $E_0$ on $|A|^2$ and hence the result follows by Step 1.  By the previous section\footnote{Although the argument in the previous section were by compactness it is well-known how to make it effective; see, for instance, \cite{CM7}.}  given $\epsilon >0$ and $r>0$ there exists a computable $E_1$ such that if $S$ is index-$1$ with $\sup |A|^2>E_1$, then there exists an annulus $An\subset S$, a surface $S_1$  and discs $D_1$, $D_2\subset S_1$ such that $S\setminus \inte(An)=S_1\setminus (\inte(D_1)\cup\inte(D_2))$ and $S_1$ has mean curvature $<1/10^{100}$ and $|A|^2<E_1+1$.  Finally, $\diam (D_i)<r_0$ and $D_1$ is $\epsilon_0$-graphical over $D_2$ where $r_0$ and $\epsilon_0$ are as in the Addendum.

The desired branched surfaces are constructed as follows.  Let $\mB=\{B_1, \cdots, B_n\}$ be the branched surfaces arising from the Addendum.  For each sector $F$ of $B_i$, construct a branched surface $B_i^F$, by picking an embedded disc $D_F\subset F$ about a point $f\in \inte(F)$, splitting $B_i$ along $D_F$ to create a new branched surface $\hat B_i^F$ with a small $D^2\times I$ closed complementary region $J$ and inserting a small annulus within $J$.  See Figure 1 at the end of the pdf version.  Note that Lemma \ref{gluing} implied to $D_f$ and a graphical copy implies that $\hat B_i^F$ can be constructed to have mean curvature and $|A|^2$ bounded above by that of $B_i^F$ up to any predetermined $\delta>0$.  Similarly, we can pass from $ \hat B_i^F$ to $B_i^F$ by gluing a rescaled catenoid into $J$ so that the resulting branched surface is $(1-\eta)$-negatively curved.  The previous paragraph implies that any index-1 surface is carried by one of these branched surfaces.\qed

\section{Effective polynomial growth}

\begin{definition}  Let $S$ be a Riemann surface and $f:\BR\to \BR$.  We say that $S$ has growth-$f$ if for each $x\in S$ and $r>0, \area(N_S(x,r))\le f(r)$.  If $f$ is a polynomial, then we say that $S$ has \emph{polynomial growth}.\end{definition}   

In 1975 Joe Plante  proved (Theorem 6.3 \cite {Pl}) that any leaf of a transversely oriented codimension-1 foliation without holonomy has polynomial growth.  In this section we apply Plante's argument to show that there is a single polynomial that works for all measured laminations carried by a fixed branched surface.  More precisely we have

\begin{theorem}   Let 	$B$ be a branched surface embedded in the Riemannian 3-manifold $M$.  There exists an effectively constructible polynomial $p(B)$, such that if $S$ is a leaf of a measured lamination carried by $B$, then the growth of $S$ is bounded by $p$.\end{theorem}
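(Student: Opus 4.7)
My plan is to carry Plante's 1975 argument \cite{Pl} over to the branched-surface setting, keeping explicit track of constants. First I would subdivide $B$ into finitely many sectors $\sigma_1,\dots,\sigma_k$ with uniformly bounded area $A_0$ and diameter $\Delta_0$, both computable from the cell structure of $B$ and the Riemannian metric on $M$. Any leaf $S$ of a measured lamination $\lambda$ carried by $B$ is then tiled by plaques (lifts of sectors under the projection $\pi\colon S\to B$), and $\area(N_S(x,r))\le A_0\cdot N(x,r)$, where $N(x,r)$ counts the plaques meeting the intrinsic $r$-ball about $x$. It therefore suffices to bound $N(x,r)$ by a polynomial in $r$ depending only on $B$.

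To bound $N(x,r)$ I would use the transverse measure together with Plante's key estimate. The lamination $\lambda$ assigns weights $w_1,\dots,w_k\ge 0$ to the sectors satisfying the switch conditions, and these define a holonomy-invariant $1$-cocycle on $B$. Plante's observation is that the existence of such an invariant transverse measure forces subexponential growth of leaves via a Cheeger-type isoperimetric argument: if the growth of $S$ were exponential, then for arbitrarily large $r$ the annular shell $N_S(x,r+1)\setminus N_S(x,r)$ would have area comparable to $\area(N_S(x,r))$, and integrating the transverse measure against this shell inside the interstitial bundle $\mI(\lambda)$ would contradict the finite total volume of $\mI(\lambda)\subset N(B)$.

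The upgrade from subexponential to polynomial follows Plante's refinement. Pass to the cover $\tilde B\to B$ killing the period homomorphism $\pi_1(B)\to \BR$ determined by the weights $w_i$; on $\tilde B$ the transverse measure integrates to a height function $h$ whose level sets contain the lifts of leaves of $\lambda$, and the deck group acts on $h(\tilde B)$ by translations. Any intrinsic $r$-ball $N_S(x,r)$ then projects into a region of $\tilde B$ whose plaque count is polynomial in $r$, with degree bounded by $b_1(B)$ and coefficients expressible in $A_0$, $\Delta_0$, the switch data of $B$, and $\Vol(N(B))$. Because the underlying leaves of $\lambda$ are unchanged by scaling $\{w_i\}$ by a positive constant, and because there are only finitely many possible supports among weight vectors satisfying the switch conditions, a single polynomial $p(B)$ can be chosen to work uniformly across all measured laminations carried by $B$.

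The principal obstacle, as I see it, is turning Plante's qualitative argument into a quantitative one. His original paper establishes polynomial growth but does not extract explicit constants; here the polynomial's degree and coefficients must be computable from the combinatorics and geometry of $B$ alone. The bookkeeping amounts to tracking, at each step of the Cheeger isoperimetric argument and the period-homomorphism construction, how the relevant constants depend on $k$, $A_0$, $\Delta_0$, $\Vol(N(B))$, and the ranks of the relevant (co)homology groups of $B$.
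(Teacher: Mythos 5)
Your plan follows Plante's general framework, and your reduction to counting plaques together with your invocation of the period homomorphism and the bound on the degree by the first Betti number are the right ideas, but there are two weak points. The Cheeger--F\o lner isoperimetric detour to subexponential growth is unnecessary: the period homomorphism gives the polynomial bound directly, and passing through subexponential growth first adds nothing. More seriously, your polynomial count lives in the cover $\tilde B$ whose deck group is the image of the period map determined by the weight vector $w=(w_1,\dots,w_k)$, so the cover itself \emph{depends on the lamination}. Your closing uniformity argument --- scaling invariance plus ``finitely many possible supports'' --- does not close this: two weight vectors with the same support can have period images of different ranks in $\BR$, and the implicit constants in the plaque count in $\tilde B$ depend on the translation structure of the deck action, which varies with $w$. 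You would need a genuine argument that these constants are bounded uniformly and computably over all admissible $w$, and that is exactly the kind of thing that is hard to extract from Plante's qualitative proof.

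The paper sidesteps this by never leaving $N(B)$. Its Step 1 shows, via Poincar\'e--Lefschetz duality against a fixed basis of $H_2(N(B),\partial N(B))$ together with a length-versus-intersection-number estimate, that the number of distinct non-torsion classes of $H_1(N(B))$ realized by closed curves of length $\le n$ is at most $h(n)=(2n/c)^b$, with $b$ and $c$ depending only on $N(B)$. Step 2 reduces the growth bound to bounding $|N_L(x,r)\cap I_i|$ for each sector, which matches your plaque-count reduction. Step 3 then builds, from the $m+1$ points of $N_L(x,r)\cap I_i$, closed curves $\gamma_1,\dots,\gamma_m$ of length $\le 2r+1$ (leaf path from $x_j$ back to the base point concatenated with the short fiber arc), and uses the transverse measure only as a class $[\mu]\in H^1(N(B),\BR)$ to check that the $[\mu](\gamma_j)$ are pairwise distinct, hence the $\gamma_j$ lie in distinct homology classes; therefore $|N_L(x,r)\cap I_i|\le h(2r+1)+1$. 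Here the dependence on the lamination is purely qualitative (distinctness of real numbers), so uniformity and computability of the polynomial are automatic --- the payoff of staying in $N(B)$ rather than passing to a $\lambda$-dependent cover.
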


\begin{proof}  Let $N(B)$ be a fibered neighborhood of $B$.  Since the growth rate of a surface is non decreasing upon passing to covering spaces and $B$ has an effectively constructible transversely orientable 2-fold cover by Proposition \ref{2-fold}, it suffices to consider the case that $B$ is transversely orientable and hence the vertical  fibering $\mV$ of $N(B)$ is orientable.  Assume each element of $\mV$ has length $\le 1$.  It suffices to show the following three steps.  \vskip8pt

\noindent\emph{Step 1}.  There exists an effectively constructible polynomial $h$ such that the number of distinct non torsion elements in $H_1(N(B))$ represented by closed curves of length at most $n$ is bounded by $h(n)$.\vskip 8pt

\noindent\emph{Proof of Step 1.} Let $\{S_1, \cdots, S_b\}$ be a set of properly embedded oriented surfaces in $N(B)$ representing a basis for  $H_2(N(B),\partial N(B))$.  By Poincare - Lefshetz duality non torsion elements of $H_1(N(B))$ are determined by their algebraic intersection number with these surfaces.  Let $c>0$ be such that if $\alpha\subset N(B)$ is a closed essential oriented curve, then for each $i$, $\length(\alpha)\ge c |<\alpha, S_i>|$.  Then  $h(n)=(2n/c)^b$ is a bounding function.\qed\vskip8pt

\noindent\emph{Step 2.}  Let $s_1, \cdots, s_k$ be the sectors of $B$ and $I_1, \cdots, I_k \in \mV$ be $I$-fibers respectively passing through these  sectors.   To prove the theorem it suffices to show that for each $ i$ there exists a polynomial $h_i$ such that if $\mu$ is a measured lamination carried by $B$, $L$ is a leaf of $\mu$ and $x\in L$, then $|N_L(x,r))\cap I_i|\le h_i(r)$.\vskip8pt

\noindent\emph{Proof of Step 2.}  This follows from the fact that there are a finite number of sectors of uniformly bounded area and diameter.  \qed\vskip 8pt

\noindent\emph{Step 3.}  Let $L$ be a leaf of a measured lamination $\mu$ carried by $N(B)$ and let $x\in L$.  If $N_L(x,r)\cap I_i=\{x_0, x_1\cdots, x_m\}$, then there exist closed curves  $\gamma_1, \cdots, \gamma_m$ in $N(B)$ such that for all $j$, $\length(\gamma_j)\le 2r+1$ and the $\gamma_j$'s represent distinct elements of $H_1(N(B))$.\vskip 8pt

\noindent\emph{Proof of Step 3.}  Since $B$ is transversely orientable,  $\mu$ determines an element $[\mu]\in H^1(N(B), \BR)$  by \emph{integrating}, with sign, closed curves over $\mu$.   For $j\in \{0,1,  \cdots, m\} $ let $\gamma_j$ be an oriented curve of length $\le 2r+1$ obtained by concatenating $a_j, b_j, c_j$, where $a_j$ is a path in $L$ of $\length \le r$ from $x$ to $x_0$; $b_j$ is a path in $\mI_i $ of length $\le 1$ from $x_0$ to $x_i$ and $c_i$ is a path of length $\le r$ in $L$ from $x_j$ to $x$.  Since the values $[\mu](\gamma_i)$ are distinct the result follows.  \end{proof}



\section{Horizontally large branched surfaces}

\begin{definition}  A branched surface $B$ is \emph{horizontally large} if no component of $\partial_hN(B)$ is a disc or an annulus. \end{definition} 


\begin{proposition}  \label{r splitting} Let $B$ be an $\eta$-negatively curved branched surface in the Riemannian 3-manifold M that fully carries a surface and let $r>0$.  Then there exists effectively constructible branched surfaces $B_1, \cdots, B_n$ obtained by regularly splitting $B$ such that every surface carried by $B$ is carried by some $B_i$ and each $B_i$ fully carries a surface.  Furthermore, if $E$ is  a subbranched surface of a regular splitting of some $B_i$,  then for each component $H$ of $\partial_h N(E)$ there exists $x\in H$ such that $N_H(x,r)\subset \inte(H)$.\end{proposition}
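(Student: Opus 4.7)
The plan is to split $B$ inductively so as to destroy horizontal boundary components without an interior $r$-ball, with termination forced by an effective area bound on any such thin component, derived from Plante's growth bound (Theorem~\ref{plante}) together with the $\eta$-negative curvature.

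The first ingredient is a uniform injectivity radius bound for leaves. Theorem~\ref{plante} gives an effectively constructible polynomial $p$ bounding the growth of every leaf of every measured lamination carried by $B$, and the same bound passes to every branched surface obtained from $B$ by splitting or by taking subbranched surfaces. Each leaf inherits sectional curvature $<\eta<0$ from $B$, so Bishop--G\"unther comparison makes the area of any embedded metric disc in a leaf grow at least exponentially in its radius; matching this against $p$ yields an effective upper bound $R_0=R_0(p,\eta)$ on the injectivity radius at every point of every leaf. The same area comparison applies to the horizontal boundary components themselves, which inherit curvature $<\eta$ as smooth surfaces in $M$.

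Next I would bound the complexity of a thin horizontal component. Suppose $B'$ is any regular splitting of $B$ or a subbranched surface of one, and $H\subset\partial_h N(B')$ has no interior $r$-ball, so $H$ sits in the $r$-neighborhood of $\partial H$ in its own metric. The length of $\partial H$ is controlled effectively by the branch locus of $B$ and the combinatorial count of splittings performed so far; Bishop--G\"unther applied to the curvature bound on $H$ then yields an effective area bound $\area(H)\le A(r,R_0)$ and an effective upper bound on the number of sectors appearing in $H$. The splitting procedure now runs inductively: at each stage, whenever a branched surface on the current list has a thin horizontal component, I regularly split it along one of the finitely many discs in $N(B)$ of bounded combinatorial complexity that either excises the thin component or absorbs it into a fatter neighbor (passing to the transverse orientation double cover provided by Proposition~\ref{2-fold} when required). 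The area bound controls the total number of splittings and yields the finite list $B_1,\dots,B_n$.

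The main obstacle I expect is the robustness demand: the interior $r$-ball property must hold not only on each $B_i$ but on every subbranched surface of every regular splitting of each $B_i$. A subbranched surface $E$ can introduce new horizontal boundary where it detaches from ambient sectors, so one must split finely enough that every candidate union of sectors of $B_i$ that could arise as such an $E$ already has no thin horizontal component. The uniform area bound above is precisely what makes this tractable: only finitely many combinatorial types of thin horizontal components can occur across all relevant subbranched surfaces, so a single finite refinement of the $B_i$ handles them all simultaneously.
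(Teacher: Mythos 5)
Your proposal diverges from the paper's argument in a way that leaves a genuine gap.  The paper's proof of Proposition~\ref{r splitting} is purely combinatorial and does not invoke Theorem~\ref{plante} or the negative curvature hypothesis at all (those enter only in Corollary~\ref{no annuli}, where a specific value of $r$ is chosen).  The paper splits $B$ so that, under the inclusion $\partial_h N(B)\hookrightarrow \partial_h N(B_i(r))$, the metric $2r$-ball about the image of $\partial_h N(B)$ lies in $\inte(\partial_h N(B_i(r)))$.  The crucial observation is that this \emph{distance} property is automatically preserved by any further regular splitting, because regular splitting only enlarges the $\Phi$-images.  Then, given a subbranched surface $E$ of any regular splitting $B'$ of $B_i(r)$ and a putative thin component $H$ of $\partial_h N(E)$, they construct an embedding $f\colon H\times I\to N(B)$ of $I$-fibers whose image swallows an entire component $F$ of $\partial_h N(B')$; such an $F$ cannot be a $\Phi$-image of a component of $\partial_h N(B)$, contradicting the definition of regular splitting.

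The gap in your argument is the treatment of the quantifier ``for every subbranched surface of every regular splitting of some $B_i$.''  You acknowledge this is the main obstacle, but the proposed fix — a single finite refinement of the $B_i$ so that ``every candidate union of sectors'' is already fat — does not address it, because the statement ranges over \emph{all} regular splittings, an infinite family with unbounded combinatorial complexity, and each new regular splitting of $B_i$ produces new sectors and hence new candidate subbranched surfaces $E$ not present before refinement.  Without a mechanism showing that the desired property propagates forward under regular splitting (as the paper's distance property does), finitely many refinements cannot close this loop.  Separately, your termination argument is fragile: the Bishop--G\"unther area bound for a thin $H$ requires control of $\length(\partial H)$, but $\partial H$ lies in $\partial_v N(E)$ and its length grows with the number of sectors, which grows as you split — you would need to show that the length growth is dominated, and that is not automatic.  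Finally, the claim that $\partial_h N(B)$ inherits sectional curvature $<\eta$ is a statement about a push-off of $B$, not about $B$ itself; it can be arranged by making $N(B)$ thin, but it is a point that needs justification, and the paper avoids needing it entirely.
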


\begin{corollary}  \label{no annuli} If $ B$ is an $\eta$-negatively curved branched surface in the 3-manifold $M$ that fully carries a surface, then there exists effectively constructible horizontally large branched surfaces $B_1, \cdots, B_n$ obtained by regularly splitting $B$ such that every surface carried by $B$ is carried by some $B_i$ and each $B_j$ fully carries a surface.  Furthermore, any subbranched surface $B'$ of any branched surface obtained by regularly splitting a $B_i$ either is horizontally large or does not fully carry a surface. \end{corollary}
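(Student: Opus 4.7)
The plan is to apply Proposition~\ref{r splitting} with $r$ chosen large in terms of $\eta$ and the polynomial $p = p(B)$ from Theorem~\ref{plante}, then rule out disc or annulus components of the horizontal boundary using the tension between the exponential area lower bound forced by $\eta$-negative curvature and the polynomial area upper bound supplied by Theorem~\ref{plante}. Specifically, let $L$ be a uniform upper bound on the length of $I$-fibers of $N(B)$. I would fix $r$ effectively so that the G\"{u}nther volume-comparison lower bound $A_\eta(r) := (2\pi/|\eta|)(\cosh(r\sqrt{|\eta|}) - 1)$ strictly exceeds $p(r + L)$. Applying Proposition~\ref{r splitting} at this $r$ yields branched surfaces $B_1, \ldots, B_n$ obtained by regularly splitting $B$, which collectively carry every surface carried by $B$, each of which fully carries a surface, with the key geometric feature that every component $H$ of each $\partial_h N(B_i)$, as well as of $\partial_h N(B')$ for any subbranched surface $B'$ of any regular splitting of any $B_i$, contains some $x$ with $N_H(x, r) \subset \inte(H)$.

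Now suppose some component $H$ of $\partial_h N(B_i)$ is a disc or an annulus, and pick $x$ as above. When $H$ is a disc, or an annulus with $\injrad_x(H) \ge r$, G\"{u}nther's inequality applied in the universal cover of $H$ yields $\area(N_H(x, r)) \ge A_\eta(r)$. On the other hand, $B_i$ fully carries a measured lamination $\mu$; transporting $N_H(x, r)$ along the $I$-fibers of $N(B_i)$ (each of length $\le L$) into a leaf $L_0$ of $\mu$ produces a region of $L_0$ of area comparable to $\area(N_H(x, r))$ and contained in an intrinsic ball $N_{L_0}(x', r + L)$. Theorem~\ref{plante} then forces $A_\eta(r) \le p(r + L)$, contradicting the choice of $r$. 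The furthermore clause follows by running the identical argument on any subbranched surface $B' \subset B_i$ that fully carries a surface, invoking the last sentence of Proposition~\ref{r splitting} to locate the requisite interior $r$-ball in every component of $\partial_h N(B')$.

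The main obstacle I anticipate is the annulus case when $\injrad_x(H) < r$: the projection from the universal cover of $H$ is then not injective on the lifted $r$-ball, so the G\"{u}nther bound does not transfer directly. I expect to resolve this using a uniform lower bound $\ell_0$ on the systole of embedded annular components of $\partial_h N$ coming from the injectivity radius lower bound on the $\epsilon$-thick part of $N$, which via a sheet-counting argument over deck translates still yields $\area(N_H(x, r)) \ge A_\eta(r)\, \ell_0 / (2r + \ell_0)$, an exponential lower bound that again beats $p(r+L)$ for $r$ sufficiently large. Alternatively, a Gauss--Bonnet computation on the subannulus of $H$ bounded by a short core geodesic and a horizontal boundary component, using the controlled geodesic curvature of $\partial_h N(B)$ inherited from the fixed geometry of $B$, should deliver the required contradiction directly.
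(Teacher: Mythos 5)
Your route is different from the paper's and it helps to see the contrast.  You and the paper both feed Theorem~\ref{plante} against the exponential growth forced by $\eta$-negative curvature, and both invoke Proposition~\ref{r splitting} at a computably large $r$, but what you extract is an area contradiction, whereas the paper extracts an \emph{upper} bound $s_0$ on the injectivity radius at every point of every carried surface.  The paper then combines $s_0$ with the uniform \emph{lower} injectivity bound (which comes from the sector geometry of $B$, not from the thick part of the ambient manifold --- note $M$ here is only a Riemannian 3-manifold, not assumed hyperbolic, so your appeal to ``the $\epsilon$-thick part of $N$'' is the wrong source) to produce computable $s_1, s_2$ with the property that $N_S(x,s_2)$ contains a $\pi_1$-injective subsurface with geodesic boundary.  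By Gauss--Bonnet that subsurface has $\chi<0$, and once the Proposition puts $N_H(x,s_2)\subset\inte(H)$ and $H$ is realized (after doubling) as a subsurface of a carried $S$, that subsurface sits $\pi_1$-injectively inside $H$, which kills the disc and annulus possibilities in a single stroke.  What the paper's version buys is precisely that uniformity: you get one topological obstruction that covers both cases.

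Your version handles discs cleanly via Cartan--Hadamard plus G\"{u}nther, but the annulus case is a real gap as written, and of your two proposed repairs only one of them can close it.  The Gauss--Bonnet fallback goes the wrong way: with $\chi(H)=0$ and $K\le\eta<0$ one gets $\area(H)\le |\eta|^{-1}\bigl(\int_{\partial H}k_g + \text{corner contributions}\bigr)$, an \emph{upper} bound on area rather than the lower bound you need, and in any case the geodesic curvature of $\partial H\subset\partial_v N(B_i)$ is not controlled (it degenerates at the cusps created by the splitting).  The sheet-counting repair does work: the systole of $H$ is bounded below by a uniform $\ell_0$ depending only on $B$ (lift an essential core of $H$ to $S$; it cannot bound in $S$ since a geodesic bounding a negatively curved disc violates Gauss--Bonnet, so the lower injectivity bound on carried surfaces applies), the exponential map in the universal cover of $H$ is injective on the lifted $r$-ball since $N_H(x,r)\subset\inte(H)$, and the covering multiplicity over that ball is at most $2r/\ell_0+1$, giving $\area(N_H(x,r))\ge A_\eta(r)\,\ell_0/(2r+\ell_0)$, still exponential in $r$.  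But you must commit to this and actually run it; you should also quantify the area distortion of the $I$-fiber transport (controlled by the fiber length $L$ and the transversality angle, both bounded by $B$), and then choose $r$ so that the weakened exponential bound beats a \emph{constant multiple} of $p(r+L)$ absorbing both the $\ell_0/(2r+\ell_0)$ factor and the transport constant.  As it stands, the proposal identifies the correct obstacle but does not resolve it.
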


\noindent\emph{Proof of Corollary.}  Applying Theorem \ref{plante} to the $\eta$-negatively curved branched surface $B$ we conclude that there exists an effectively computable  $s_0>0$ such that any surface $S$ carried by $B$ has injectivity radius at most $s_0$ at all points of $S$.  This follows by comparing the exponential growth rate of discs in negatively curved surfaces and the uniform polynomial growth rate of $S$.  On the other hand any surface carried by $B$ has a uniform lower bound of injectivity radius.  Thus, there exist  uniform $s_1, s_2$, depending only on $s_0$, $\eta$ and $B$ such that if $x\in S$, then $N_S(x, s_1)$ contains a closed geodesic and $N_S(x,s_2)$ contains a non empty $\pi_1$-injective subsurface $T$ with geodesic boundary.  In particular $T$ is not an annulus or disc.  

Now apply the Proposition with $r=s_2$ to obtain the branched surfaces $B_1, \cdots, B_n$.     Thus if $E$ is a subbranched surface of some branched surface $E'$ obtained by regularly splitting $B_i$ and $H$ is a component of $\partial_h N(E)$, then for some $x\in H$, $N_H(x, s_2)\subset \inte(H)$.  If $E$ fully carries a surface $S$, then after possibly doubling $S$ we can assume that $H\subset S$ and hence by the first paragraph  $H$ is neither a disc nor an annulus.  \qed

\vskip 10 pt

\noindent\emph{Proof of Proposition.}  Fix $r>0$. It is routine to regularly split  $B$ into $B_1(r), \cdots, B_{n_r}(r)$ such that every surface carried by $B$ is carried by some $B_i(r)$, each $B_j(r)$ fully carries a surface and the following additional property holds. For each $i$, under the natural inclusion of $\partial_h N(B)$   into $\partial_h N(B_i(r))$,  the metric $2r$-ball about $\partial_h N(B)$ in $\partial_h N(B_i(r))$ is contained in the interior of $\partial_h N(B_i(r))$.  Note that any regular splitting of $B_i(r)$ continues to retain this property.

Let $E$ be a subranched surface of a regular splitting $B'$ of some $B_i(r)$.  We show that each component $H$ of $\partial_h N(E)$ contains a point $x$ such that $N_H(x,r)\subset \inte(H)$.  If it fails for $H$, then let $F$ be a component of $\partial_h N(B')$ that corresponds to a point $y\in H$.  We obtain a contradiction by showing that $F$ does not arise from a component of  $\partial_h N(B)$ contradicting the fact that $B'$ is the result of a regular splitting of $B$.  View $N(E)\subset N(B')\subset N(B)$ so that the various $I$-fibers of one lie in the interior of the next.    We will assume that $ B$ is transversely orientable for the proof in the general case is similar.    It suffices to show that there exists an embedding $f:H\times I\to N(B)$ such that $f|H\times 0$ is a homeomorphism onto $H$, $f|\partial H\times I$ is a homeomorphism onto some components of $\partial_v N(E)$, $f|F\times 1$ is a homeomorphism onto a component $H_1$ of $\partial_h N(E)$ and each $f(x\times I)$ embeds into an $I$-fiber of $N(B)$.   Therefore $F\subset f(H\times I)$ and hence does not correspond to any component of $\partial_h N(B)$ under a regular splitting.  Let $I_x\subset N(B)$ denote the $I$-fiber that contains $x$.  It suffices to show that if $x\in H$, then there exists a subinterval $J_x\subset I_x$ such that $x\in J_x$ and $J_x\cap N(E)=\partial J_x$.  

 Fix $x\in \inte(H)$.  We show that   $J_x$ exists.   Give $\partial_h N(E)$ and $\partial_h N(B')$ the Riemannian metrics induced by their projections into $B$ by contracting each fiber of $N(B)$ to a point.   By construction of $B'$, if $z\in \partial_h N(B')$ and $d_{\partial_h N(B')}(z, \partial_v N(B' )\le 2r$, then either $z\in \partial_v N(B')$ or there exists a subinterval $K_z\subset I_z$ such that $K_z\cap N(B')=\partial K_z$.  
 For $x\in \inte(H)$ let $L_x$ denote the maximal subinterval of $I_x$ such that $L_x\cap N(E)\subset \partial L_x$ and $\partial L_x\subset N(E)\cup \partial_h N(B)$.  Since for $x\in \inte(H)$ and $t\in L_x\cap \partial_h N(B')$, $d_{\partial_h N(B')}(t, \partial_v N(B')\le d_{\partial_h N(E)}(x,\partial_v N(E))$ it follows that $J_x$ exists and equals $L_x$.  Indeed, if $\alpha\subset H$ is a path of length $\le 2r$ from $x$ to $\partial H$, then this assertion evidently holds for points of $\alpha$ near $\partial H$ and then readily extends to all of $\alpha$.
\qed\vskip 10pt

\section{proof of Theorem \ref{main}}


\begin{definition}  A 1-sided Heegaard surface $S\subset M$ is a closed 1-sided surface whose complement is a handlebody.   $S$ gives rise to a Heegaard surface $T$ by passing to a 2-fold cover and attaching a neck between locally parallel sheets.  We call $S$ and $T$ \emph{associated} surfaces.\end{definition}

We record the following well known result.  Recall that a surface $S$ is \emph{compressible} if there exists an \emph{embedded} disc $D$ such that $D\cap S=\partial D$ and $\partial D$ is an essential curve in $S$. Also recall Remark \ref{compressing}.

\begin{lemma}  \label{1-sided}  If $S$ is a compressible 1-sided Heegaard surface, then the associated Heegaard surface is weakly reducible.  If $\chi(S)<0$ and there exists an immersed compressing disc $D$ for $S$ that restricts to an essential compressing disc for $\partial N(S)$ such that $\partial D$ has a single point of self intersection, then the associated Heegaard splitting $T$ is weakly reducible.\qed\end{lemma}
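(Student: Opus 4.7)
The plan is to produce disjoint compressing discs on the two sides of $T$, witnessing weak reducibility. Write $V = \overline{M \setminus N(S)}$ for the handlebody complementary to a regular neighborhood of $S$, and let $H$ be the $1$-handle (with core arc $\alpha \subset V$) used in the construction of $T$, so that $M = U \cup V'$ where $U = N(S) \cup H$ and $V' = \overline{V \setminus H}$ are handlebodies with common boundary $T$. The co-core $\delta$ of $H$ is a compressing disc for $T$ on the $U$-side, and $\partial\delta$ is essential in $T$ because $H$ is a genuine $1$-handle of the handlebody $U$. Since $\alpha$ may be isotoped freely in $V$ subject to keeping $U$ a handlebody, we may place it away from any prescribed compact subset of $V$.

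For the first statement, let $D$ be the embedded compressing disc for $S$. Isotope $\alpha$ to be disjoint from $D$, and push $D$ slightly off $S$ into $V$ to obtain an embedded disc $D' \subset V'$ with $\partial D' \subset \partial N(S) \subset T$ and disjoint from $H$. Since $\partial D$ is essential in $S$ and $\partial N(S) \to S$ is a covering map, the lift $\partial D'$ is essential in $\partial N(S)$: a disc in $\partial N(S)$ bounded by $\partial D'$ would project (by the deck-quotient argument) to a disc or half-disc in $S$ bounded by $\partial D$, contradicting its essentiality. An innermost-disc argument on the intersection of a hypothetical disc-in-$T$ with boundary $\partial D'$ and the tube annulus of $H$ then promotes this to essentiality in $T$. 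With this in hand, $D'$ and $\delta$ are disjoint compressing discs on opposite sides of $T$, establishing weak reducibility.

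For the second statement, the hypothesis that $D$ restricts to an essential compressing disc for $\partial N(S)$ furnishes directly an embedded essential compressing disc $\tilde D \subset V$ for $\partial N(S)$; the figure-8 condition on $\partial D$ with its single self-intersection is exactly what allows the pushoff to resolve into an embedded curve in $\partial N(S)$ (the two branches of $D$ at the self-intersection lie on opposite local sides of $S$, so they separate when pushed into $V$). Arrange $\alpha$ disjoint from $\tilde D$; then $\tilde D \subset V'$ is disjoint from $\delta$. The assumption $\chi(S) < 0$ is used to ensure that $\partial \tilde D$ is essential in $T$ and not merely in $\partial N(S)$, ruling out small-genus degeneracies (e.g.\ projective-plane-like cases) where the tube addition could kill the curve. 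The disjoint pair $(\tilde D, \delta)$ then gives weak reducibility exactly as in the first statement.

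The main obstacle in both parts is the verification that the lifted or restricted compressing disc has boundary essential in the tubed surface $T$, not merely in $\partial N(S)$. This requires the standard innermost-disc analysis of how a hypothetical disc region in $T$ interacts with the attaching annulus of the $1$-handle $H$, reducing after finitely many surgeries to the already-established essentiality in $\partial N(S)$.
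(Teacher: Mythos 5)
Your decomposition has the tube on the wrong side of $\partial N(S)$, and this is not a cosmetic issue: it changes which pieces are handlebodies and hence which side of $T$ each disc lies on. In the construction ``pass to the double cover and attach a neck between locally parallel sheets,'' the neck is a regular neighborhood $H$ of a vertical $I$-fiber of $N(S)$; that is, $H\subset N(S)$, not $H\subset V$. The two handlebodies bounded by $T$ are $W=\overline{N(S)\setminus H}$ (a twisted $I$-bundle over $S$ minus a disc, hence a handlebody) and $U'=V\cup H$. Your choice $U=N(S)\cup H$ with $H$ attached through $V$ is not a handlebody: $\pi_1(N(S)\cup H)\cong\pi_1(S)\ast\BZ$, which is not free whenever $\chi(S)\le 0$ (e.g.\ for the Klein bottle it contains $\BZ\times\BZ$), so the pair $(U,V')$ you wrote down is not the Heegaard splitting along $T$.

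With the correct splitting the argument breaks: the cocore disc $\delta$ of $H$ lies in $U'=V\cup H$, and the pushed-off compressing disc $D'$ also lies in $V\subset U'$. So the two discs you exhibit are on the \emph{same} side of $T$ and do not witness weak reducibility. What is needed is a compressing disc in the other handlebody $W$ disjoint from $\partial D'$. The standard way to produce it is to note that $W$ is the (twisted) $I$-bundle over $S'=S\setminus\inte(D_0)$ (where $H$ lies over $D_0$); choose $D_0$ away from $\partial D$, pick a properly embedded essential arc $\alpha\subset S'$ with $\partial\alpha\subset\partial S'$ and $\alpha\cap\partial D=\emptyset$, and take $Q_\alpha\subset W$ to be the vertical square over $\alpha$. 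Then $\partial Q_\alpha\subset T$ meets $\partial N(S)$ in the two lifts of $\alpha$ and meets the tube annulus in two vertical arcs, so $\partial Q_\alpha\cap\partial D'=\emptyset$, and $Q_\alpha$, $D'$ are disjoint compressing discs on opposite sides of $T$. The hypotheses that $\partial D$ is essential (first part) and that $\chi(S)<0$ (second part) are precisely what guarantee that $S'$ cut along $\partial D$ is not a disc or annulus, so that such an essential arc $\alpha$ avoiding $\partial D$ exists. Your remarks about lifting $\partial D$ to an essential curve in $\partial N(S)$ and the innermost-disc promotion to essentiality in $T$ are fine as far as they go, but the argument must be rebuilt on the correct splitting and must produce the second disc on the $N(S)$-side rather than reusing the cocore of $H$.
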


\begin{theorem} \label{technical}  Let $f:B\to M$ be an embedding of an $\eta$-negatively curved branched surface into the 3-manifold $M$.  Either $B$ carries an incompressible surface or there exists a computable $C(f,\eta)\in \BN$ such that if $S$ is a Heegaard surface of genus $\ge C$ and either $S$ or an associated  1-sided surface  is carried by $B$  then $S$ is weakly reducible. \end{theorem}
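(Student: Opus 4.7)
The plan is to prove the contrapositive: assuming $B$ carries no incompressible closed surface, I will produce a computable $C(f,\eta)$ so that any Heegaard surface of genus at least $C$ carried by $B$ (or the 2-sided Heegaard surface associated to a 1-sided surface of genus at least $C$ carried by $B$) is weakly reducible. The argument proceeds in three stages, matching the outline in the introduction.

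First, I would reduce to the case that $B$ is horizontally large and that this property is inherited by all subbranched surfaces which fully carry a surface. This is exactly what Corollary~\ref{no annuli} provides: after an effective regular splitting of $B$ into horizontally large branched surfaces $B_1,\ldots,B_m$, every surface carried by $B$ is carried by some $B_i$, so it suffices to prove the theorem for each $B_i$ separately. Replace $B$ by one such $B_i$ and let $F_1,\ldots,F_q$ be its fundamental solutions. Every carried closed surface writes uniquely as $S=\sum_j n_j F_j$ with $n_j\in\BN$, and $\genus(S)$ grows linearly in $\sum n_j$. Thus, once a threshold $p=p(B)$ is chosen in the next step, setting $C$ to be the a priori genus forced by $n_j\le p$ for all $j$ (plus one) guarantees that $\genus(S)\ge C$ forces $n_{j_0}>p$ for some index $j_0$.

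Second, let $B'\subset B$ be the subbranched surface supported on those $F_j$ with $n_j>p$. Then $B'$ fully carries a surface and, by the inheritance property from the first step, is itself horizontally large. By our standing hypothesis $B'$ cannot carry an incompressible surface, so $\partial_h N(B')$ is compressible; pick an essential compressing disc $\Delta$ for it. Third, and this is the geometric crux of the proof, I would use $\Delta$, together with the effective injectivity-radius bound from Corollary~\ref{injectivity} and the uniform polynomial growth from Theorem~\ref{plante}, to build disjoint compressing discs for $S$ on opposite sides, witnessing weak reducibility. In the cleanest situation $\pi(\Delta)\subset B'$ decomposes, after a small perturbation, into two monogons meeting along an arc of the branch locus; these lift to disjoint monogons in $\partial_h N(B)$, and each monogon extends to a compressing disc for $S$ by taking two parallel copies of the monogon and joining them by a strip that runs through the interstitial bundle $\mI(S)$. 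The critical quantitative input is that the strip need only penetrate $\mI(S)$ by a depth that is effectively computable from $B$ and $\eta$, because Corollary~\ref{injectivity} gives an effective upper bound on the injectivity radius of $S$ at every point and hence a lower bound on the local sheet density of $S$ in $N(B)$. Choosing $p$ larger than this depth bound ensures that $S$ has enough locally parallel sheets in $\mI(S)$ near $\pi(\Delta)$ for the two resulting compressing discs to sit on opposite sides of $S$ and to be disjoint. In the 1-sided case the same construction produces an immersed compressing disc for $S$ whose boundary has exactly one self-intersection, so Lemma~\ref{1-sided} then delivers weak reducibility of the associated 2-sided Heegaard surface.

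The main obstacle I expect is the case analysis in the third step: a general compressing disc $\Delta$ for $\partial_h N(B')$ need not project to just two monogons, and in each combinatorial type that can occur one must verify that disjoint compressing discs for $S$ can indeed be produced on opposite sides. Equally delicate, and essential to the conclusion, is making the $\mI(S)$-penetration estimate genuinely computable from $B$ and $\eta$ rather than merely finite, so that $p(B)$ and hence $C(f,\eta)$ are effective; this is precisely where the effective form of Plante's growth theorem and the effective injectivity-radius corollary earn their keep.
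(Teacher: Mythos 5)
Your high-level strategy tracks the paper's: reduce to horizontally large branched surfaces via Corollary \ref{no annuli}, decompose $S$ by coefficient size, and convert a compressing disc for the subbranched surface carrying the high-coefficient part into a weak reduction of $S$ using monogons extended by interstitial strips. However, there is a genuine gap in the decomposition step. You propose a single numerical threshold $p$ and take $B'$ supported on those $F_j$ with $n_j > p$. The argument actually requires a \emph{multiplicative gap} between the small and large coefficients, not merely a cut-off: the holonomy construction (the paper's Lemma \ref{holonomy}) produces essential non-homotopic curves $\tau_i$ only for $i$ in a range roughly $[c_0 K_p,\,K_{p+1}-c_0 K_p]$, where $K_p$ controls the small weights and $K_{p+1}$ lower-bounds the large weights. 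For that range to be nonempty --- and large enough to find a tail whose end lands in it --- one needs $K_{p+1} \gg K_p$. A single threshold allows coefficients $n_j=p$ and $n_{j+1}=p+1$ to straddle the cut with ratio essentially $1$, so the holonomy argument has nothing to work with. The paper's tower $K_i = K^{10^i}$, cutting at the largest $p$ with $n_p < K_p$, guarantees a ratio $K_{p+1}/K_p = K^{9\cdot 10^p}$, and the effective constant is ultimately $K^*=K_k$.

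A second gap: you flag the possibility that the compressing disc does not split into two monogons but do not say how this is handled, and you omit the preprocessing that makes the monogon analysis tractable. The paper's Claim produces, via effective splitting, pairs $(E_j,L_j)$ with no discs of contact, consistent germ attachment along the pair of pants $P'_j$, and --- critically --- compressing discs $A^j_1\subset H_1$, $A^j_2\subset H_2$ in a single sector, chosen independently of $S$. These rescue the case where all monogons of $D$ lie in one handlebody: an outermost-arc argument with $A_2$ (Lemma \ref{no circles} and the concluding case analysis) either manufactures a second monogon or yields a weak reduction directly. Without the discs $A^j_i$, your construction only produces compressing discs on one side. Scharlemann's nesting criterion \cite{Sc} is also needed at two points (Lemma \ref{cusps} and Lemma \ref{no circles}) and is absent from your sketch, as is the preliminary Lemma \ref{p>1} showing $n_1=1$. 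Finally, the depth to which the tail penetrates $\mI(S)$ is controlled by the combinatorial weight bounds of the Claim (conditions 3 and 4), not directly by Corollary \ref{injectivity}; the injectivity-radius estimate enters only through Corollary \ref{no annuli}. Getting the roles of these inputs right is what makes $C(f,\eta)$ genuinely computable.
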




\noindent\emph{Proof of Theorem \ref{main}.} By Theorem \ref{negative branched} for any $-1<\eta<0$ we can effectively construct finitely many $\eta$-negatively curved branched surfaces,  such that all index-$\le 1$ minimal surfaces are carried by these branched surfaces. Fix $\eta=-1/2$.  As announced in \cite{PR} any strongly irreducible Heegaard surface is isotopic to one that is either index-$\le 1$ or has an associated 1-sided surface of index-0;  thus it or an associate is carried by one of these branched surfaces.  Now apply Theorem \ref{technical} to each of these branched surfaces.  If $M$ is non Haken, then the first conclusion never arises. Let $G(M)$ be the maximum of $C(f,\eta)$ over all these branched surfaces.\qed\vskip10pt

\noindent\emph{Proof of Theorem \ref{technical}.}  By Haken \cite{Ha} (stated in the context of normal surfaces) there exists a finite set  $F_1, \cdots, F_k$ of \emph{fundamental surfaces} carried by $B$ such that any surface $S$ carried by $B$ is of the form $n_1F_1+\cdots +n_kF_k$ where $n_i\in \BZ_{\ge 0}$.  (Here addition is by cut and paste in the manner compatible with being carried by $B$.)    Thus the subbranched surfaces that fully carry a surface are finite and enumerable.  By \cite{JO} one can algorithmically decide if a branched surface is essential.    Our argument will require us to consider finitely many subbranched surfaces of branched surfaces obtained by splitting $B$.  Since we can apply \cite{JO} at each stage, we will always assume that all branched surfaces under consideration are not essential.   By Corollary \ref{no annuli}  it suffices to consider the case that $B$ is horizontally large and fully carries a surface and that any subbranched surface of a regular splitting of $B$ that fully carries a surface is also horizontally large.    Being $ \eta$-negatively curved, $\chi(F_i)<0$ for all $i$.  Since $\chi(S)=\sum_{i=1}^k n_i\chi(F_i)$  it suffices to show that  there exists a computable number $K^*$ such that if $S=n_1F_1+\cdots +n_kF_k, S$ is a Heegaard surface and for some $i, n_i\ge K^*$, then $S$ is weakly reducible.

Let $S$ be any surface of the form $n_1F_1+\cdots  + n_kF_k$.  After reordering and deleting the terms with $n_i=0$ we can assume that $1\le n_1\le n_2\le\cdots\le n_m$ where $m\le k$.  By the first paragraph we can assume that the subbranched surface $B_S$ that fully carries $S$ is horizontally large.   Fix $K\ge 2$.  (Its actual value will be computed later.)  Define $K_i=K^{10^i}$.  Let $p$ be the largest value such that $n_p<K_p$ or take $p=0$ if no such value exists.  Define  $S_1=n_1F_1+\cdots + n_pF_p$ and $S_2=n_{p+1}F_{p+1}+\cdots +n_m F_m$.  We shall see in Lemma \ref{p>1} that if $S$ is a strongly irreducible Heegaard surface,  then $n_1=1$ and hence $p\ge 1$.   Ultimately we will take $K^*=K_k$, thus our surfaces  of interest will satisfy $S_1,S_2\neq\emptyset$.  Let $B_2$ be the subbranched surface carried by $S_2$.  Note that there are only finitely many possibilities for $B_S$ and $B_2$.

\begin{definition} \label{germ} Let $A=\alpha\times I$ be a germ of an immersed smooth annulus in $M$ 
with $\alpha\times I\cap B_2=\alpha\times 0$ and $\alpha\times (0,1]$ is embedded and transverse to 
$B_S$.   Denote $\alpha\times 0$ by $\alpha$. Let $s_1, s_2, \cdots, s_p$ denote the germs of arcs of 
$B_S\cap A$ with one endpoint on $\partial A$.  (There may be no such germs.)  These germs 
correspond to cusps $c_1, \cdots, c_p$ at $\alpha \times 0$.  Fix an orientation on $\alpha$.     If all the 
germs attach in the same manner with respect to the orientation (i.e. the cusps point in the same 
direction), then we say that $B_S$ attaches to $B_2$ \emph{consistently} along $\alpha$  on the $A$-
side otherwise $B_S$ attaches \emph{inconsistently}.  Since consistent attachment is independent of the 
choice of germ, it makes sense to talk about consistent/inconsistent attachment once the side of $\alpha$ 
is chosen. \end{definition}

The Claim  below roughly says that after replacing $(B_S, B_2)$ by another pair obtained by splitting $B$ and passing to certain subbranched surfaces the resulting pair and its associated data satisfy many nice properties.   Furthermore, there are only finitely (pairs, data) that need to be considered.

\vskip 10pt


\noindent\emph{Claim.}  \label{clean up} There exists an effectively enumerable finite set of pairs of horizontally large branched surfaces $(E_1, L_1), \cdots, (E_u, L_u)$ with each $L_i$ a subbranched surface of $E_i$  such that given a surface $S$ carried by $B$ with $S=S_1+S_2$ as above  and $S_1, S_2\neq \emptyset$, there exists a $j$ such that \vskip 8pt

1) $E_j$ fully carries $S$ and $L_j$ fully carries $S_2$.\smallskip

2) Neither $E_j$  nor $L_j$ have discs of contact.\smallskip

3) For every sector $\sigma$ of $L_j$, $w_\sigma(S_2)\ge K_{p+1}$, where $w_\sigma(S_2)$ is the weight of $S_2$ in $\sigma$.  \smallskip

4) For every sector $\tau$ of $E_j\setminus L_j, w_\tau(S_1)\le C K_p$ for some $C$ that depends only on $B$.\smallskip

5) If $L_j$  is compressible, there corresponds a compressing disc $D_j$  for $L_j$  transverse to $E_j$ and   $P'_j \subset \partial_h N(L_j)$ a compact essential  pair of pants disjoint from $\partial_v N(L_j)$  such that $\partial D_j \subset \partial P'_j$ and for each component $\beta'$ of $\partial P'_j$,\   $E_j$ attaches to $L_j$ consistently along  the $D_j$-side of $\pi(\beta')$. \smallskip 

6)  If $S$ is a strongly irreducible Heegaard surface with complementary handlebodies $H_1, H_2$, then there exist compressing discs $A^j_1, A^j_2$ for $E_j$ independent of $S$, such that $A^j_1\subset H_1, A^j_2\subset H_2$ with $\partial A^j_1\cup \partial A^j_2 \subset \inte(\sigma)$ where $\sigma$ is a sector of $E_j\setminus L_j$.  \smallskip

\begin{remark}  We do not need an analogue of 6) when $S$ is a 1-sided Heegaard surface.\end{remark}

\noindent\emph{Proof of Claim.}  Define $B_S$ and $B_2$ as above.  Since $B_2$ fully carries $S_2$ condition 3) holds for $B_2$.  By the linear isoperimetric inequality for $\eta$-negatively curved surfaces, $B_2$ has only finitely many discs of contact and their areas are uniformly bounded, thus after a uniformly bounded amount of splitting we obtain a branched surface $B^1_2$ with out such discs.  Observe that  condition 3) continues to hold.  As there are only finitely many possibilities for $B_2$, there are only finitely many $B^1_2$'s that can arise.  Our $L_j$'s will be among these branched surfaces.

A preliminary uniformly bounded amount of $S$-splitting transforms $B_S$ to a branched surface $B^1_S$ containing $B^1_2$ as a subbranched surface.   Thus while $B_S$ depends on $S$, the number of possible resulting $B^1_S$'s as $S$ varies is uniformly bounded above independent of $K$.  Since a fundamental surface goes over a sector of $B$ a uniformly bounded number $C_0$ of times, it follows that for each sector $\tau$ of $B_S\setminus B_2$, $w_\tau(S)< k C_0 K_p$.  Since $B^1_S$ is obtained by splitting, a similar fact holds for it. Since each of $B^1_S$ and $B^1_2$ carry surfaces, each is horizontally large.    Let $B^2_S$ be obtained from $B^1_S$ by splitting along all its discs of contact.  Note that $B^1_2$ continues to be a subbranched surface of $B^2_S$ and the number of possibilities for $B^2_S$ is uniformly bounded.

If $B^1_2$ is not essential, then either $B^1_2$ compresses or $B^1_2$ has a monogon.  Since $B^1_2$ is horizontally large the latter implies the former.    Fix a compressing disc $D$ for $B^1_2$.  By \cite{JO} compressing discs are algorithmically findable.   Since $B^1_2$ is horizontally large there exists an immersed pants $P\subset B^1_2$ that lifts to an embedded essential pants $P'\subset\partial_h N(B^1_2)$ with $\partial D$ a component of $\partial P'$. 

If the germs of arcs of $D\cap B^2_S$ near $\partial D$ are not consistent, then we can find adjacent germs whose cusps point towards each other.  After an $S$-splitting of $B^2_S$ supported  near the segment of $\partial D$ connecting these cusps we obtain a branched surface $B^3_S$ with fewer germs.  Thus after a uniform amount of splitting we obtain a branched surface $B^4_S$ with consistent germs.  Again after  further splitting to eliminate contact discs we can assume that $B^4_S$ and $B^1_2$ continue to satisfy conditions 1-5. A similar argument deals with the other components of $\partial P'$.   \vskip 8pt

\begin{lemma}  \label{p>1}  If $S$ is a strongly irreducible Heegaard surface or a 1-sided associate to a strongly irreducible Heegaard surface, then $n_1=1$. \end{lemma}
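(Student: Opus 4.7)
The plan is to suppose $n_1\ge 2$ and derive a pair of disjoint compressing discs on opposite sides of $S$, contradicting strong irreducibility directly in the two-sided case, or via Lemma \ref{1-sided} in the one-sided case.

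First I realize the carried structure concretely: place $n_1$ disjoint parallel copies $F_1^{(1)},\dots,F_1^{(n_1)}$ of $F_1$ together with $n_i$ parallel copies of each $F_i$ ($i\ge 2$) in $N(B)$, all transverse to the $I$-fibering $\mathcal V$, and perform the cut-and-paste along all intersection curves to obtain $S$. Since $n_1\ge 2$, two distinct copies $F_1^{(j)},F_1^{(j+1)}$ exist. Any two consecutive sheets of $S$ in the vertical ordering cobound a trivial product in $N(B)$ with interior disjoint from $S$; by choosing a consecutive pair whose upper and lower sheets contain matched portions of $F_1^{(j)}$ and $F_1^{(j+1)}$, I obtain a product region $R\cong F\times I$ (with $F\subset F_1$ a large subsurface) lying entirely in one handlebody, say $H_1$.

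Because $B$ is $\eta$-negatively curved, $\chi(F_1)<0$ and $F_1$ has positive genus. Under the standing assumption in the proof of Theorem \ref{technical} that every subbranched surface under consideration is inessential (otherwise we produce an incompressible carried surface and finish), $F_1$ is compressible in $M$. Innermost-disc surgery with respect to the other carried sheets converts a compressing disc for $F_1$ inside $R$ into a compressing disc $D_1\subset H_1$ for $S$ whose boundary lies near $F_1^{(j)}$. The parallel copy $F_1^{(j+1)}$ permits the symmetric construction on the side of $R$ opposite $H_1$, yielding a compressing disc $D_2\subset H_2$ with boundary near $F_1^{(j+1)}$. Since $F_1^{(j)}$ and $F_1^{(j+1)}$ are disjoint in $N(B)$, $D_1$ and $D_2$ can be arranged disjoint, and weak reducibility of $S$ (or of the associated two-sided Heegaard surface) follows.

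The main obstacle is controlling the interference of the sheets $F_i^{(\cdot)}$ with $R$ and with the compressing discs. One must verify that after accommodating these intersecting sheets via innermost-disc surgery, honest compressing discs on opposite sides of $S$ persist and remain disjoint. The cleanup carried out in the Claim (no discs of contact, horizontal largeness, consistent attachment of $E_j$ along $L_j$), together with the sector weight bounds from items (3)--(4) of that Claim, is exactly what makes this bookkeeping finite and uniform; it plays the same role here as the analogous analysis used elsewhere in the proof of Theorem \ref{technical} to build compressing discs from monogons in the interstitial bundle.
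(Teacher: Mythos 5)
Your plan---suppose $n_1 \ge 2$ and produce disjoint compressing discs on opposite sides of $S$---is the same as the paper's, but your route to the discs has genuine gaps. The paper never compresses $F_1$. Since $B^4_S$ is horizontally large, has no discs of contact, and (by the standing assumption in the proof of Theorem \ref{technical}) is not essential, there exist essential compressing discs $D_1 \subset H_1$ and $D_2 \subset H_2$ for $\partial_h N(B^4_S)$, and these give essential compressions of $S$; if $n_1 \ge 2$ then every $n_i \ge 2$, so the weight of $S$ over each sector is at least two, and therefore $\partial D_1$ and $\partial D_2$ lie in disjoint (outermost) sheets of $S$. In the one-sided case the same weight bound shows a compressing disc for the branched surface is an honest compressing disc for the smooth surface $S$ (cf.\ Remark \ref{compressing}), and Lemma \ref{1-sided} applies.

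Two steps in your version do not go through. First, ``the subbranched surfaces under consideration are inessential, therefore $F_1$ is compressible in $M$'' is not a valid inference: inessentiality of a branched surface $B$ means $\partial_h N(B)$ compresses in $C(N(B))$ (once monogons are ruled out by horizontal largeness), not that a carried fundamental surface such as $F_1$ compresses in $M$; and even if $F_1$ were compressible in $M$, nothing guarantees a compressing disc can be found inside your thin interstitial region $R$. Second, the region $R \cong F \times I$ between $F_1^{(j)}$ and $F_1^{(j+1)}$ is not well-defined after the cut-and-paste that produces $S$: the sheets of $S$ over a sector are glued from pieces of different $F_i^{(l)}$'s along the double curves, so the interior of the naive $R$ meets $S$. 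The ``symmetric construction'' of $D_2$ on the other side is likewise asserted without argument. The structural input your version is missing is exactly the horizontal largeness of $B^4_S$, which is what supplies compressions on both sides in the paper's proof and makes the disjointness a one-line consequence of the weight bound.
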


\begin{proof}  If $S$ is 1-sided and $n_1>1$, then a compressing disc for $ \partial_h N(B^4_S)$ gives rise to a compressing disc for $S$ and hence the associated Heegaard surface $T$ to $S$ is weakly reducible.  Now assume that $S$ is a Heegaard surface.  Since $B^4_S$ is horizontally large, $B^4_S$ essentially compresses to both the $H_1$ and $H_2$ sides respectively along discs $D_1$ and $D_2$.  Since $B^4_S$ has no discs of contact, these $D_i$'s correspond to essential compressions of $S$.  If $n_1\ge 2$, then the weight of $S$ on each sector of $B^4_2$ is at least two which implies that $\partial D_1\cap \partial D_2=\emptyset$ and hence $S$ is weakly reducible. \end{proof}


Now suppose that $S$ is a strongly irreducible Heegaard surface.  As $B^4_S$ is horizontally large without discs of contact there exist essential compressing discs $A_1,A_2$ for $B^4_S$ lying respectively in $H_1, H_2$.     We can always choose a fixed $ A_1$ independent of $S$ and then choose $A_2$ to be one of a set of at most $|C(N(B^4_s))|-1$ elements.  Since $S$ is strongly irreducible,  $A_1\cap A_2\neq\emptyset$ and hence both intersect the same sector $\sigma' $ with $w_{\sigma'}(S)=1$.  It follows that after a uniform amount of splitting of $B^4_S$ we obtain $B^5_S$ with $\partial A_1\cup \partial A_2\subset\inte(\sigma) $ and $w_\sigma(S)=1$.  Here $\sigma$ is the sector descended from $\sigma'$.  After further splitting we obtain $B^6_S$ satisfying conditions 1-6).  Note that if $B^6_S$ carries another strongly irreducible surface $S'$, then as in the proof of Lemma \ref{p>1}, $w_\sigma(S')=1$, hence $A_1, A_2$ suffice for $S'$.  Thus a given $B^4_S$ produces finitely many $B^6_S$'s satisfying the conclusion of 6).  To clarify, the branched surface $B^4$ gives rise to finitely many pairs $A_1, A_2$ and each pair gives rise to a $B^6$, thus the $A^j_1, A^j_2$ in 6) are independent of $S$.  This completes the proof of the claim.\qed\vskip10pt

\begin{remark}  The sector $\sigma$ is the analogue in our setting of the almost normal disc that appears in Li's work. \end{remark}

Again suppose that $S =S_1+S_2$ either is a strongly irreducible Heegaard surface or is 1-sided with a strongly irreducible associate.  Assume that $S$ corresponds  to the branched surfaces $E_j, L_j$, the disc $D_j$ and  the pants $P'_j$ and the whole package satisfies the conclusions 1)-5) of the Claim.  If $S$ is 2-sided assume in addition that  6) also holds.   In what follows we will assume that the projection $\pi|P'_j$ is an embedding, i.e. the image $P_j\subset $ is an embedded surface.  The proof in the general case is very similar by noting that in the argument that follows all the \emph{action} will lie to the $D_j$-side of $P_j$.   To simplify and abuse the notation we  now denote $E_j, L_j, P_j, D_j, A^j_1, A^j_2$ respectively by $B_S, B_2, P, D, A_1, A_2$.

\begin{lemma} \label{cusps} If $S$ either is strongly irreducible or is the 1-sided associate to a strongly irreducible surface, and $D'$ is a compressing disc for $B_2$, then there exist non trivial germs of arcs of $D'\cap B_S$ with endpoints in $\partial D'$.\end{lemma}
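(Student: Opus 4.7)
The plan is to argue by contradiction: assuming no such germ exists, I will produce a weak reduction of $S$ (in the $2$-sided case) or of its associated Heegaard splitting (in the $1$-sided case), contradicting the strong irreducibility hypothesis.

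First I would convert $D'$ into a compressing disc for $S$ on one side. Since $\partial D'\subset\partial_h N(B_2)$ is essential and, by assumption, $D'$ meets no sector of $B_S\setminus B_2$ in an arc abutting $\partial D'$, a collar neighborhood of $\partial D'$ in $D'$ lies outside $N(B_S)$, i.e.\ in a single handlebody component of the Heegaard splitting (or of its $2$-fold cover, in the $1$-sided case); call this $H_1$. Pushing $\partial D'$ slightly through $N(B_2)$ onto an outermost sheet of $S_2\subset S$ gives an embedded disc $D''$ with $\partial D''\subset S$ and a collar of $\partial D''$ in $H_1$. The interior of $D''$ can meet $S$ only in closed curves, which I would remove by innermost disc surgery to obtain an embedded disc $E\subset\cl(H_1)$ with $\inte E\subset H_1$ and $\partial E\subset S$. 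The key point to verify is that $\partial E$ is essential in $S$; this should follow because $\partial D'$ was essential in $\partial_h N(B_2)$ and, since $B_2$ is horizontally large without discs of contact (Claim (2)), the outermost sheet of $S_2$ embeds in $S$ preserving essentiality of curves after the cut-and-paste sum with $S_1$.

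For the $1$-sided case, $E$ witnesses compressibility of $S$, so Lemma \ref{1-sided} yields weak reducibility of the associated Heegaard splitting, contradicting the assumed strong irreducibility of that associate. For the $2$-sided case, I would invoke conclusion (6) of the Claim: there are fixed compressing discs $A_1\subset H_1$ and $A_2\subset H_2$ for $B_S$ with $\partial A_1\cup\partial A_2$ lying in a single sector $\sigma\subset B_S\setminus B_2$. Taking $A_2\subset H_2$ (the side opposite $E$) and using that $\pi(\partial E)$ lies in $B_2$ while $\pi(\partial A_2)\subset\sigma\subset B_S\setminus B_2$, a small isotopy separates $\partial E$ and $\partial A_2$ on $S$. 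Then $E\subset H_1$ and $A_2\subset H_2$ are disjoint essential compressing discs on opposite sides of $S$, giving the desired contradiction.

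The hard part is the essentiality argument in the first step: ensuring that the innermost disc surgeries do not destroy essentiality of the boundary curve in $S$. This requires careful bookkeeping of how $\partial_h N(B_2)$ sits inside $S$ after the cut-and-paste sum $S=S_1+S_2$, and genuinely uses that $B_2$ is horizontally large without discs of contact together with the weight bound $w_\sigma(S_2)\geq K_{p+1}$ of Claim (3), which guarantees many parallel sheets of $S_2$ in $N(B_2)$ — enough room to push $\partial D'$ onto an outermost copy separated from $S_1$ by a deep buffer of $S_2$ sheets.
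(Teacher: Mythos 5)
Your overall strategy — contradict strong irreducibility by producing a weak reduction — matches the paper, and your disjointness argument via conclusion (6) of the Claim is a perfectly workable alternative to what the paper does. But the construction of the compressing disc $E$ has a genuine gap: you cannot simply \emph{``remove the closed curves by innermost disc surgery.''} Take an innermost circle $\gamma$ of $\inte(D'')\cap S$ in $D''$, bounding a subdisc $F\subset D''$ on one side of $S$. If $\gamma$ is essential in $S$, there is no disc in $S$ to surger along, and the induction stops. You have then produced a compressing disc $F$, but on a side of $S$ that you do not control and for a curve $\gamma$ whose position in $S$ you do not control, so you have no disjointness with either $A_1$ or $A_2$ and no weak reduction. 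This is precisely the difficulty that the paper sidesteps by invoking Scharlemann's nesting property \cite{Sc}: given that the relevant circles are essential in $S$ and $S$ is strongly irreducible, one of them bounds a disc $D_1$ in $H_1$ or $H_2$ outright, with no innermost-disc cleanup required. Your attempted innermost-disc argument is, in effect, an incomplete re-derivation of that lemma.

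Beyond that, your diagnosis of where the difficulty lies is slightly off: innermost disc surgery never changes the boundary curve, so ``destroying essentiality of $\partial E$'' is not the risk; the risk is that the surgeries cannot be carried out at all. Once Scharlemann is in hand, note also one structural difference from the paper that is worth recording: the paper applies Scharlemann to a circle of $S\cap \pi^{-1}(\partial D')$ that is \emph{interior} to $N(B_S)$ (in the sense of the definition in \S1), and then pairs $D_1$ with an arbitrary essential compressing disc $D_2$ on the other side whose boundary lies in $\partial_h N(B_S)$ — disjointness is then automatic because interior points of $N(B_S)$ avoid $\partial_h N(B_S)$. You instead push $\partial D'$ to the outermost sheet (so $\partial E\subset \partial_h N(B_S)$) and use the specific discs $A_1, A_2$ from Claim (6), getting disjointness from $\pi(\partial E)\subset B_2$ versus $\pi(\partial A_i)\subset \sigma\subset B_S\setminus B_2$. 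Both disjointness arguments are sound; yours relies on the more elaborate setup of Claim (6), whereas the paper's only needs that $B_S$ is horizontally large without discs of contact. Either way, be careful with labeling: Claim (6) fixes which of $H_1, H_2$ contains $A_1$ and which contains $A_2$, while you re-declare $H_1$ to be the side of $S$ containing the collar of $\partial D'$; reconcile these so you pick whichever $A_i$ sits on the side opposite the disc supplied by Scharlemann.
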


\begin{proof}  If not, then $S\cap \piinv(D')$ contains at least $K_{p+1}/2 -1\ge 1$ parallel disjoint circles lying   \emph{interior} to $N(B_S)$ (see Definition \ref{basic}).  The 1-sided case now follows by Lemma \ref{1-sided}.   Since $B_S$ is horizontally large without discs of contact, all these curves are essential in $S$.    By Scharleman's \cite{Sc} nesting property if $S$ is strongly irreducible, then a given  interior curve bounds a disc $D_1$ in one of $H_1$ or $H_2$.  Suppose that $D_1\subset H_1$.  Since $B_S$ is horizontally large and without discs of contact, there exists an essential compressing disc $D_2\subset H_2$  with $\partial D_2\subset \partial_h N(B_S)$ and hence disjoint from $\partial D_1$.   \end{proof}

Denote the components of $\partial P$ by $\partial D$, $\alpha$ and $\beta$.   Orient $\partial D$ in the 
direction that the cusps on the $D$-side of $\partial D$ point.  Orient $\alpha, \beta$ and $P$ so that $
\partial P=\alpha+\beta+\partial D$.      Let $d_0\in \partial D$ (resp.  $a_0\in\alpha$, $b_0\in \beta$) be 
disjoint from $b(B_S)$ and  $I_D$ (resp. $I_\alpha$, $I_\beta$) be the $I$-fiber of $N(B_S)$  through 
$d_0$ (resp. $a_0, b_0$).   
Let $\gamma\subset P$( resp. $\delta\subset P$) be an embedded arc from $d_0$ to $a_0$ (resp. $d_0$ to 
$b_0$).   Denote the points of $S\cap I_{\partial D}$  (resp. $S\cap I_\alpha$, $S\cap I_\beta$) by $d_1, \cdots, 
d_n$ (resp. $a_1, \cdots a_p$; $b_1, \cdots b_q$) where  $d_1$ is the first point on the $D$-side of $I_D$ 
and the others appear in order.  Similarly order $S\cap I_\alpha$ and $S\cap I_\beta$.  There exists $d,a,b\in 
\BZ$ such that the holonomy of $S$ around $\piinv(\partial D)$ (resp. $\piinv( \alpha)$, $\piinv(\beta)$) is given 
by $d_i$ goes to $d_{i+d}$ (resp. $a_i$ goes to $a_{i+a}$, $b_i$ goes to $b_{i+b})$ provided that it is defined.  
By the choice of orientation on $\partial D$, $d>0$.  Since $a+b+d=0$ we can assume, after possibly switching $\alpha$ with $\beta$, that 
$a<0$.  

If $b=0$, then by Claim 5), there is no branching of $B_S$ on the $D$-side of $ \beta$.  Thus there exists an embedded annulus $\beta\times [0,1]  \subset N(B_S)$ such that $\pi(\beta\times I)=\beta, \beta\times 0\subset \partial_h N(B_S)$ and $S\cap \beta\times I$ is a union  of at least $K_p/2-1$ parallel circles $\beta_0=\beta\times 0, \beta_1, \cdots, \beta_n$ such that, with the exception of  $\beta\times 0$, each is interior to $N(B_S)$.

\begin{lemma} \label{circles essential} No  $\beta_i$ bounds a disc in $S$ and no distinct pair of $\beta_i$'s bound an annulus.\end{lemma}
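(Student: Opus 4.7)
The plan is to argue by contradiction, assuming either some $\beta_i$ bounds a disc $\Delta \subset S$ or some distinct pair $\beta_i, \beta_j$ cobounds an embedded annulus $A' \subset S$. The key observation is that the circles $\beta_0, \beta_1, \ldots, \beta_n$ cut the annulus $\beta \times [0,1]$ into sub-annuli $A_{k,k+1}$, each of which lies in $N(B_S) \setminus S$; in the two-sided case, consecutive sub-annuli lie on opposite sides of $S$, so they alternate between the handlebodies $H_1$ and $H_2$.

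For the disc-bounding case in the two-sided setting, given $\beta_i$ bounding $\Delta \subset S$, the surfaces $D_- = \Delta \cup A_{i-1,i}$ and $D_+ = \Delta \cup A_{i,i+1}$ are embedded discs in $N(B_S)$ lying in opposite handlebodies, with boundaries $\beta_{i-1}$ and $\beta_{i+1}$ respectively (here I use that $n$ is large, so $i\pm 1$ is available). If both these boundaries are essential in $S$, then $D_-$ and $D_+$ are disjoint compressing discs on opposite sides of $S$, giving a weak reduction and contradicting strong irreducibility. If one of $\beta_{i\pm 1}$ is inessential in $S$, it bounds a disc $\Delta' \subset S$; depending on whether $\Delta'$ is nested with $\Delta$ or disjoint from $\Delta$ in $S$, either (a) we obtain an embedded torus in $N$ (from $\Delta \setminus \inte \Delta'$ glued to a sub-annulus of $\beta \times [0,1]$), which, since $N$ is non-Haken hyperbolic, must compress, yielding a compressing disc for $S$ to combine with a disc on the opposite side from the alternating structure, or (b) we obtain an embedded $2$-sphere in $N$ (from $\Delta \cup \Delta'$ glued to a sub-annulus), which bounds a ball by irreducibility of $N$, allowing us to isotope $S$ across the ball to reduce the number of inessential $\beta_k$'s and iterate using the abundance $n \geq K_p/2-1$.

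For the one-sided case, the analogous construction yields an embedded disc (or an immersed disc with a single boundary self-intersection) that restricts to an essential compression of $\partial N(S)$; Lemma \ref{1-sided} then forces weak reducibility of the associated Heegaard surface, a contradiction. For the annulus-bounding case, if $\beta_i, \beta_j$ cobound an annulus $A' \subset S$, then $A'$ together with the sub-annulus of $\beta \times [0,1]$ between $\beta_i$ and $\beta_j$ is an embedded torus $T \subset N$, which must compress by the non-Haken hyperbolic hypothesis. The resulting compressing disc for $T$ gives a compressing disc for $S$ in one handlebody, and the alternating sub-annulus structure together with the many remaining $\beta_k$'s produces a disjoint compressing disc on the opposite side (or applies via Lemma \ref{1-sided} in the one-sided case), contradicting strong irreducibility.

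The main obstacle is step (b): carefully iterating the ball-bounding reductions, while keeping track of the configuration inside $N(B_S)$ and ensuring that the surface $S$ after isotopy is still fully carried by $B_S$, is the most delicate part. The abundance of parallel circles $n\geq K_p/2-1$ (which follows from the lower bound on $n_{p+1}\geq K_{p+1}$ combined with the argument preceding the lemma) is what provides enough room to complete the iteration and reach a configuration where the relevant boundaries are essential in $S$.
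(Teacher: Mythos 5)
Your approach is genuinely different from the paper's and contains gaps that I don't think can be papered over within the stated framework.

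The paper's proof is short and combinatorial: if $\beta_i$ ($i>0$) bounds a disc in $S$, then using that $B'_S$ is horizontally large with no discs of contact it argues the same for $\beta_{i-1}$, and by descending induction the boundary curve $\beta_0\subset\partial_h N(B'_S)$ bounds a disc there, hence a compressing disc for $B_2$ lying in $C(N(B_2))$ and meeting $B'_S$ in no germs along its boundary --- contradicting Lemma \ref{cusps}. The second claim is handled similarly. Your plan instead builds weak reductions directly by gluing the hypothetical disc $\Delta$ (or annulus) in $S$ to the vertical sub-annuli of $\beta\times[0,1]$ and invoking strong irreducibility, irreducibility of $M$, and --- crucially --- compressibility of tori.

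Two of your steps do not go through as written. First, the appeal to ``$N$ non-Haken hyperbolic'' to compress the torus $T$ is not available: Lemma \ref{circles essential} sits inside the proof of Theorem \ref{technical}, which is stated for a general 3-manifold $M$ and a branched surface $B$; the only assumption in force is that the branched surfaces under consideration do not carry incompressible surfaces. Your torus $T$ is assembled from a piece of $S$ and a vertical sub-annulus of the interstitial bundle, so it is not transverse to the $I$-fibering and is not (without further argument) carried by $B_S$ or any of its subbranched surfaces; there is therefore nothing preventing $T$ from being incompressible, and the step breaks down. Second, the iteration in case (b) isotopes $S$ across the ball bounded by $\Delta\cup A_{i,i+1}\cup\Delta'$; but the entire surrounding machinery (the decomposition $S=n_1F_1+\cdots+n_kF_k$, the branched surfaces $B_S$, $B_2$, $B'_S$, the constants $K_p$, and Lemma \ref{holonomy}) is built for the given position of $S$ in $N(B_S)$, and after such an isotopy $S$ may no longer be fully carried by $B_S$, its normal coordinates change, and none of that data survives. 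You acknowledge this is ``delicate,'' but as written it is a genuine gap, not a deferred technicality. A smaller issue: $D_-$ and $D_+$ both contain $\Delta$, so as stated they are not disjoint compressing discs on opposite sides; one needs to push $\Delta$ slightly to each side of $S$ before the weak-reduction conclusion applies.

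The underlying geometric picture you are using --- cutting $\beta\times I$ into alternating sub-annuli and gluing them to pieces of $S$ --- is essentially the same mechanism as in the paper's induction step, but the paper avoids ever producing a torus or a sphere and never moves $S$: it stays entirely inside the branched-surface combinatorics, pushes the defect down to $\beta_0\subset\partial_h N(B'_S)$, and then kills it with the already-established Lemma \ref{cusps}. Recasting your argument in that shape --- i.e.\ replacing the ``compress a torus / isotope across a ball'' steps with the observation that the chain of inessential $\beta_i$'s must reach $\beta_0$ --- would close the gaps and would recover the paper's proof.
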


\begin{proof} If $\beta_i, i>0$ bounds a disc in $S$, then since $S$ has no discs of contact and is horizontally large it follows that $\beta_{i-1}$ does too and hence by induction so does $\beta_0$.  Since $B_S$ has no discs of contact,  this implies that $\beta_0$ bounds a disc in $\partial_h N(B_S)$ and hence in $C(N(B_2))$, contradicting Lemma \ref{cusps}.  A similar type of argument establishes the second claim.\end{proof}

\begin{lemma} \label{holonomy}  There exists  uniform $c_0, c_1\in\BN$ such that if $K>c_1$ and  $c_0 K_p\le i \le K_{p+1}-c_0 K_p$, then the curve $\tau_i\subset S$ is an essential simple closed curve in $S$.  Furthermore, no pair $\tau_i, \tau_{i+1}$ are homotopic in $S$.  Here $\tau_i$ is the curve that starts at $d_i$ and follows the path in $S$ that projects to  $\partial D^{-a} *\gamma*\alpha^d*\bar\gamma$.  \end{lemma}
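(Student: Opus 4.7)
The plan is to establish, in order, closure of $\tau_i$, its simplicity, its essentiality in $S$, and pairwise non-homotopy of consecutive $\tau_i$.

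\textbf{Closure and choice of constants.} Lifting the based loop $\partial D^{-a}*\gamma*\alpha^d*\bar\gamma$ starting at $d_i$, the successive segments end at $d_{i-ad}$, then $a_{\phi(i-ad)}$, then $a_{\phi(i-ad)+ad}$, and finally $d_{\phi^{-1}(\phi(i-ad)+ad)}=d_i$, where $\phi$ is the index bijection on transversals induced by lifting the embedded arc $\gamma$ through the smooth part of $P$; since $\gamma$ is disjoint from $b(B_S)$, $\phi$ is order-preserving and hence a translation, so the final index equals $i$ and $\tau_i$ closes up. The holonomies $|a|,|b|,d$ are bounded by a constant $C_1=C_1(B)$, because the boundary loops $\partial D,\alpha,\beta$ of the cleaned-up pants $P$ have uniformly bounded length in $B_S$, while Theorem~\ref{plante}, combined with the $\eta$-negative curvature of $B_S$, produces a uniform lower bound on inter-sheet spacing along any transversal. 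Taking $c_0\ge 2C_1^2$ and $c_1$ large enough that $c_0 K_p\le K_{p+1}/2$ for $K\ge c_1$, and using $n\ge K_{p+1}$ from Claim~3, ensures that every intermediate index visited by the lift lies in $\{1,\dots,n\}$.

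\textbf{Simplicity.} Along the $\partial D^{-a}$-portion, the transversal indices on $I_D$ form the strictly increasing sequence $i,i+d,\dots,i-ad$ (since $d>0$), so this portion is an embedded arc. Symmetrically, the $\alpha^d$-portion is embedded along $I_\alpha$ with strictly decreasing indices (since $a<0$). The short lifts of $\gamma$ and $\bar\gamma$ occupy disjoint parallel copies inside a thin fibered neighborhood of $\gamma$, with distinct endpoints, so they introduce no self-intersections.

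\textbf{Essentiality and non-homotopy.} The projected loop $\pi(\tau_i)$ represents the word $x^{-a}y^d$ in $\pi_1(P)$, a free group of rank two generated by $x=\partial D$ and $y=\gamma*\alpha*\bar\gamma$; this element is nontrivial since $a,d\neq 0$. Consistent attachment along the $D$-side of $\pi(\partial D),\pi(\alpha),\pi(\beta)$ (Claim~5) provides a fiberwise retraction of a one-sided neighborhood of $B_2$ in $N(B_S)$ onto $N(B_2)$. If $\tau_i$ bounded a disc $\Delta\subset S$, this retraction pushes $\Delta$ to a singular disc in $N(B_2)$ whose boundary realizes the nontrivial loop above in $\pi_1(P)\subset \pi_1(\partial_h N(B_2))$; gluing in $-a$ parallel copies of the compressing disc $D$ along the $\partial D^{-a}$-portion and performing a standard boundary surgery produces either a disc of contact for $B_2$ (contradicting Claim~2) or an essential compressing disc for $B_2$ disjoint from $D$ that, in combination with the $D$-side structure, violates the horizontally large hypothesis on $B_2$. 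Pairwise non-homotopy of $\tau_i,\tau_{i+1}$ is handled by the same template applied to a would-be annular cobordism: the retraction yields a singular annulus in $N(B_2)$ whose two boundary components project to the same loop but sit at adjacent sheet levels, again forcing a disc of contact. The main obstacle is organizing this surgery carefully; it is the analogue of Li's helix-turn-helix argument.
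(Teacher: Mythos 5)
Your argument for closure and for a bound on the quantities $a,d$ and the branch crossings of $\gamma$ matches the paper in outline, though the bound the paper actually uses comes directly from Claim~4 (each sector of $E_j\setminus L_j$ has $S$-weight at most $CK_p$), not from Theorem~\ref{plante}; Plante's theorem was already spent earlier in Corollary~\ref{no annuli} and does not reappear here. Your simplicity argument is essentially the paper's ``by construction.''

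The essentiality and non-homotopy part, however, has a genuine gap. First, nontriviality of $\pi(\tau_i)$ in $\pi_1(P)$ does not imply $\tau_i$ is essential in $S$: if $\tau_i$ bounded a disc $\Delta\subset S$, the immersed disc $\pi(\Delta)$ lives in $B_S$, not in $P$, so the free-group computation in $\pi_1(P)$ proves nothing by itself. Second, the ``fiberwise retraction of a one-sided neighborhood of $B_2$'' promised by Claim~5 would at best be defined near $B_2$; a disc $\Delta$ (or annulus between $\tau_i$ and $\tau_{i+1}$) in $S$ has no reason to stay in such a neighborhood, so you cannot push it into $N(B_2)$ and the subsequent surgery-to-disc-of-contact step never gets off the ground. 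Third, even if it did, ``an essential compressing disc for $B_2$ disjoint from $D$'' is not a contradiction — $B_2$ was taken to be compressible, and an extra compressing disc violates neither horizontal largeness nor anything in Claims~1--5.

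The paper avoids these pitfalls by a concrete intersection-number argument, with a case split you have omitted. When $b=0$ the fiber $\pi^{-1}(\beta)\cap S$ consists of actual circles in $S$; Lemma~\ref{circles essential} guarantees these are essential, and one builds a properly embedded arc $\sigma\subset S$ (projecting to a pants-arc from $\beta$ to itself crossing $\pi(\gamma)$ once) with $|\sigma\cap\tau_{i+1}|=1$ and $\sigma\cap\tau_i=\emptyset$; the standard ``crosses once, endpoints on essential circles'' argument then gives both essentiality and non-homotopy. When $b\neq 0$ there are no such circles, and the paper instead exhibits a closed curve $\kappa_{i+1}\subset S$ (with a different projected word, depending on whether $d<-a$ or $d>-a$, and a separate direct argument when $a=-1$) meeting $\tau_{i+1}$ transversely in a single point and missing $\tau_i$, so $\tau_{i+1}$ is non-separating and the pair are non-homotopic. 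You need some such explicit dual curve realizing an algebraic intersection number one with $\tau_{i+1}$; the retraction-and-surgery route as written does not supply it.
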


\begin{remark}  Uniform means independent of $(E_j, L_j, D_j, P_j, A_1, A_2)$.\end{remark}

\begin{proof}  By Claim 4) if the arc $\gamma$ crosses the branch locus $r$ times, then if one starts at the point $d_j$ and follows a path in $S$ that projects to $\gamma$, then one ends at $a_m$ where $|j-m|\le r CK_p$.  Similarly $d< s_0 C  K_p$ (resp. $a< s_1 C K_p$) where $s_0$ (resp. $s_1$) is the number of times $\partial D$ (resp. $\alpha$) crosses the branch locus on the $D$-side. By Claim 3) $\min{|I_D\cap S|, |I_\alpha\cap S|}\ge K_{p+1}$.  Since $K_{p+1}/(a d+r C K_p)\to \infty$ as $K\to \infty$ it follows  that for computable  and sufficiently large  $c_0$ and $K$ the curve $\tau_i$ is well defined.  By construction it is simple.

We first consider the case $b=0$.    Let  $\sigma'\subset P$ be a properly embedded path in $P$ with $\partial \sigma'\subset \beta
$ crossing $\pi(\gamma)$ transversely once and intersecting $b(B'_S)$ a minimal number of times.  If $\sigma_{i+1} \subset S $ is 
such that $\pi(\sigma_{i+1})=\sigma'$ and passes near $d_{i+1}$, then $|\sigma\cap \tau_{i+1}|=1$  and $\sigma\cap \tau_i=
\emptyset$.  Since $\partial \sigma$ lies in  circle components of $\piinv(\beta)\cap S$, it follows using Lemma  \ref{circles essential}  that 
$\tau_{i+1}$ is essential in $S$ and is not homotopic in $S$ to $\tau_{i}$.  Provided that $c_0, c_1$ and $K$ are sufficiently 
large, then for $i$ in the range of the Lemma both $\sigma_{i+1}$ and $\sigma_i$ exist with the desired properties.

We now show that if $b\neq 0$ and $c_0, c_1$ and $K$ are sufficiently large and $i$ is in the desired range, then  $\tau_i $ is non 
separating in $S$ and not homotopic to $\tau_{i+1}$.  If $a=-1$, then $\tau_i$ can be isotoped to intersect $\tau_{i+1}$ in one point 
and hence they are homologically independent. By construction $d\ge 2$ and  $d\neq -a$ since $b\neq 0$.  If $d<-a$ (resp. $d>-a$), then there is 
curve $\kappa_{i+1}\subset S$ starting near $d_{i+1}$ such that $\pi(\kappa_{i+1})$ is homotopic to $\partial D^{-1} *\gamma*
\alpha^{-1}*\bar \gamma*\partial D *\gamma*\alpha*\bar \gamma$ (resp. $\gamma * \alpha* \bar\gamma*\partial D*\gamma *
\alpha^{-1}*\bar \gamma* \partial D^{-1})$ such that $\kappa_{i+1}\cap \tau_i=\emptyset$ and $\kappa_{i+1}$ intersects $\tau_{i
+1}$ once transversely.  \end{proof}




Let $B^a_S$ denote the (possibly nongeneric) branched surface obtained by maximally regularly $S$-splitting $E_j$ without effecting $B_2$, i.e. regularly $S$-split as much as possible away from $B_2$ so that  $b(B^a_S)\subset B_2$ is contained in $B_2$.  Next split $B^a_S$ to remove all discs of contact and call the resulting branched surface $B'_S$.  Note that $B'_S$ carries $B_2$.  Since each non simply connected component of $\mI(S)\cap \piinv(P)$ is untouched in the passage from $B_S$ to $B'_S$ it follows that Lemma \ref{holonomy} continues to hold with $B_S$ replaced by $B'_S$.  We abuse notation by letting $A_1, A_2$ denote the compressing discs for $B'_S$ descending from $B_S$.  

\begin{lemma} \label{no circles} After an  isotopy of $B'_S$ supported away from $B_2$ we can assume that $B'_S\cap \inte(D)$ contains no simple closed curves.\end{lemma}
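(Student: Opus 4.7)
The plan is an innermost-disc argument applied to the intersection $B'_S \cap \inte(D)$. The key preliminary observation is that, by the construction of $B'_S$ in the previous paragraph, one has $b(B'_S) \subset B_2$. Since $D$ is a compressing disc for $B_2$, its interior is disjoint from $B_2$, hence from $b(B'_S)$. Consequently $B'_S \cap \inte(D)$ is an honest embedded $1$-manifold (no branching), with components that are either simple closed curves in $\inte(D)$ or arcs with endpoints on $\partial D \subset B_2$.

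First I would choose a simple closed curve $c \subset B'_S \cap \inte(D)$ that is innermost in $D$, bounding a disc $D_0 \subset \inte(D)$. An arc component of $B'_S \cap \inte(D)$ cannot enter $\inte(D_0)$, since its endpoints on $\partial D$ lie outside $D_0$ and two components of a $1$-manifold cannot cross one another. Therefore $\inte(D_0) \cap B'_S = \emptyset$, and of course $D_0 \cap B_2 = \emptyset$.

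Next I would realize the standard innermost-disc ambient isotopy using $D_0$. After replacing $N(B'_S)$ by its transversely orientable double cover as in Proposition \ref{2-fold} and working equivariantly if necessary, I may assume $c$ is two-sided in $B'_S$, so that a tubular neighborhood $A$ of $c$ in $B'_S$ is an embedded annulus. Inside a sufficiently thin regular neighborhood $\nu(D_0) \subset M$ of $D_0$, the only piece of $B'_S$ is a sub-annulus of $A$; the disc $D_0$ then provides a canonical compactly-supported ambient isotopy of $M$ that sweeps this sub-annulus across $D_0$, eliminating the intersection circle $c$ without introducing new components, since $\inte(\nu(D_0))$ was disjoint from $B'_S$ outside of $A$. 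The support of the isotopy lies in $\nu(D_0)$, which we can take disjoint from the closed set $B_2$, so the isotopy is supported away from $B_2$; it also avoids $b(B'_S) \subset B_2$, so the branched surface structure of $B'_S$ is preserved. Iterating strictly decreases the number of simple closed curves in $B'_S \cap \inte(D)$, so the process terminates in finitely many steps.

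The main obstacle is justifying the innermost-disc move as a genuine ambient isotopy rather than a topology-changing surgery. The point is that near $c$, $B'_S$ is transverse to $D$ and $\inte(D_0)$ is disjoint from $B'_S$, so $\nu(D_0)$ has a product decomposition in which the sub-annulus of $A$ and a parallel copy of $D_0$ cobound an embedded product region across which $A$ may be swept. The one-sided case is reduced to this by passing to the double cover from Proposition \ref{2-fold}; all modifications stay inside $\nu(D_0) \subset \inte(D)\times(-\epsilon,\epsilon)$, which is automatically disjoint from $B_2$.
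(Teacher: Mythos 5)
Your argument has a fatal gap at the central step: the ``standard innermost-disc ambient isotopy'' you invoke does not exist in the situation at hand. When $c$ is an innermost circle of $B'_S\cap\inte(D)$ bounding $D_0\subset D$ with $\inte(D_0)\cap B'_S=\emptyset$, the nearby piece of $B'_S$ is an embedded annulus $A$ meeting $D$ transversely along $c$, so the two ends of $A$ lie on \emph{opposite} sides of $D$. There is no compactly supported ambient isotopy ``sweeping $A$ across $D_0$'': think of a vertical cylinder $\{x^2+y^2=1,\,|z|\le 1\}$ meeting the horizontal disc $\{z=0\}$ in its core circle --- you cannot push that circle off the disc while keeping the cylinder's ends fixed. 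The innermost-disc move you are thinking of removes a circle $c$ only when $c$ bounds a disc $D_1$ in the \emph{other} surface as well, so that $D_0\cup D_1$ is a sphere and irreducibility lets you isotope across the ball it bounds. If $c$ is essential in the carried surface $S$, then $D_0$ is a genuine compressing disc, and no isotopy can eliminate it.

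This is precisely why the paper's proof is substantially more involved. It first isolates and removes (by irreducibility) only those circles $\kappa$ that bound embedded discs in $B'_S$ disjoint from $b(B'_S)$. For the remaining circles, whose $\pi^{-1}$-preimages are essential in $S$, the 1-sided case is handled via Lemma~\ref{1-sided}, and in the 2-sided case one invokes Scharlemann's nesting property \cite{Sc} (strong irreducibility forces all these circles to bound discs on a single side, say $H_1$) and then runs an outermost-arc argument on the other-side compressing disc $A_2$ to either isotope the circles away or reach a contradiction with strong irreducibility. Your blind argument uses none of this machinery --- no appeal to strong irreducibility, to $A_2$, to Scharlemann, or to Lemma~\ref{1-sided} --- and these are not optional: some circles of $B'_S\cap\inte(D)$ really are essential and cannot be isotoped away, so the lemma is false without the Heegaard hypotheses being brought to bear. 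The passage to the orientation double cover and the check that $b(B'_S)\subset B_2$ are fine preliminary observations, but they do not rescue the argument.
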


\begin{proof}   Suppose that $\kappa$ is simple closed curve in $B'_S\cap \inte(D)$, then since $\kappa\cap b(S')=\emptyset$,  $\piinv(\kappa)\cap S$ is a union of parallel circles.   Since $B'_S$ is horizontally large without discs of contact it follows that either each such curve is essential in $S$ or $\kappa$ bounds an embedded disc $E\subset B'_S$ with $E\cap b(B'_S)=\emptyset$.  In the latter case using the irreducibility of $M$, $\kappa$ and perhaps other closed curves can be isotoped off of $D$ without introducing new curves of intersection. Thus we can assume that all the remaining closed curves correspond to families of essential curves in $S$.  
If $S$ is 1-sided and some closed curve of $B'_S\cap \inte(D)$ exists, then by considering an innermost such curve in $D$ we obtain a contradiction to Lemma \ref{1-sided}.

Now assume that $S$ is 2-sided and $\kappa_1,\cdots \kappa_n$ denote the simple closed curves of $B'_S\cap \inte(D)$.  By \cite{Sc}, strong irreducibility and possibly switching $H_1$ with $H_2$ each component of each $\piinv(\kappa_i)$ bounds a disc in $H_1$.  Note that for every  $i$, $|\piinv(\kappa_i)|=1$ else one of these discs is disjoint from $A_2$.  Using these discs in $H_1$ and the irreducibility of $M$ we can assume that each $\kappa_i$ bounds a disc in $D\cap H_1$, in particular is innermost in $D$.  

An outermost disc argument involving $A_2$ enables us to eliminate the $\kappa_i'$s as follows.   By irreducibility of $M$ we can assume that  $A_2\cap D$ consists of arcs.    An arc $\alpha$ of $A_2\cap D$ outermost in $A_2$ bounds a subdisc $F\subset A_2$.  Use $F$ to isotope $B'_S$ to $\hat B_S$ so that if $\hat A_2$ is the resulting disc, then $\hat A_2\cap D=A_2\setminus \alpha$.  Since by Claim \ref{clean up}  $A_2\cap B_2=\emptyset$, $B_2$ does not move in this isotopy.    Observe that either $\hat B_S \cap \inte D$ contains no simple closed curves or each such curve bounds a disc in $H_1$.  There are several cases to check depending on the value of $z=|\alpha\cap (\cup \kappa_i)|$ being $0,1$ or $2$.  Now isotope $\hat B$ as above so that each closed component of $\hat B\cap \inte D$ bounds a disc in $D\cap H_1$.  Thus by induction we either remove all closed curves of intersection or eventually isotope $A_2$ off of $D$  with all closed curves of intersection  bounding essential discs in $H_1$, contradicting strong irreducibility.\end{proof}

Lemma \ref{cusps} now implies that there exists an embedded monogon $m_1\subset D$ with $m_1\cap N(B'_S)=\partial m_1$.  For $c_0 K_p+1\le i \le K_{p+1}-c_0 K_p-1$, let $\tau_i$ be the curve constructed in Lemma \ref{holonomy} and $Z^1_i, Z^2_i$ the vertical annuli in $\mI(S)$ with one boundary component $\tau_i$.   If $S$ is 2-sided assume that $Z^1_i\subset H_1$.  Since $d\ge 2$ each $Z^i_j$ will be embedded unless $a=-1$, in which case $S$ does not separate and hence is 1-sided and after a small isotopy $Z^i_j$ has a single point of self intersection.  In a manner similar to  \cite{Li2} use $m_1$ and a $Z^i_j$  to create an  compressing disc for $S$.   To start with attach a \emph{tail} to $m_1$ to extend it to a disc $m'_1$ with $\partial m'_1\subset S\cup I_D$.    The tail is an embedded $I\times I \subset \piinv(\partial D)$, a union of $\mI(S)$ fibers, with $I\times I\cap m_1=0\times I$ and  $ I\times \partial I\subset S$.    Let $\delta(m'_1)$ be the closure of $m'_1\setminus  S$.   Using a minimal length tail we can assume that for some $i,j$, $m'_1\cap Z^i_j$ is a single $\mI(S)$ fiber.  Construct an immersed compressing disc $D_1$ for $S$ by taking two parallel copies of $m'_1$ and connecting them by a band that wraps once around $Z^i_j$.  See \cite{Li2} Figure 5.3a.  Since by Lemma \ref{holonomy}  $\tau_i, \tau_{i+1}$ are not homotopic it follows that  $\partial D_1$ is essential in $S$.    If $S$ is 1-sided, then $D_1$ is an immersed essential compressing disc with embedded interior whose boundary has at most one  point of self intersection, hence by Lemma \ref{1-sided} we obtain a contradiction.     From now on we will assume that $S$ is 2-sided, $D_1$ is embedded and $D_1\subset H_1$. \vskip 8pt 


If $D$ contains two monogons $m_1, m_2$ with $m_i\in H_i$, then construct a weak reduction as follows.  Transversely orient $D$ 
so that $P$ lies to the +-side.  Let $D_1$ be as above and let $m'_2$ consist of  $m_2$ together a  tail that spirals around $\piinv
(\partial D)$ enough times to have its end  lie well above $D_1$, i.e. if $\delta m'_2=\{d_u, d_{u+1}\}$, then $u>\max \{r|d_r\cap 
D_1\neq\emptyset\}+2|ad|$.  Create  the disc $D_2\subset H_2$ as follows.  Start with  two parallel copies of $m'_2$ that lie to 
negative side of $D$  (and hence disjoint from $\piinv(P)$), then build a compressing disc using them and the annulus $Z^2_u$.
\vskip 8pt

If all the monogons of $D $ lie in $H_1$, proceed as follows.  Strong irreducibility implies that $A_2\cap D_1\neq\emptyset$ and hence $A_2\cap D\neq\emptyset$.    Consider an arc $\eta$ of $A_2\cap D$ that is outermost in $A_2$ bounding the half-disc $F_\eta\subset A_2$.  If $\eta\cap S$ lies in the same component of  $S\cap \inte(D)$, then  $F_\eta$ together with a disc lying in $D$, slightly isotoped, is a (possibly trivial) compression disc $D'$ for $H_2$ disjoint from $D_1$.  If $D'$  is inessential, then $A_2$ can be isotoped to eliminate the arc $\eta$ of intersection.    Next suppose that $D$ contains exactly two monogons and that $\eta$ has endpoints in each.  In that case there exists a disc $D_\eta\subset D$ such that $\inte(D_\eta)\cap B'_S=\emptyset, \partial D_\eta\subset B'_S\cup \eta$ and $ D_\eta$ has a single cusp.  Now consider the monogon $m_2=F_\eta\cup D_\eta$.  If $F_\eta$ lies to the negative side of $D$, then use $m_2$ to construct a second compression disc as in the previous paragraph.  If $F_\eta$ lies to the +-side of $D$, then swap the roles of $m_2$ and $m_1$ to construct a weak reduction.  I.e. first use $m_2$ and a tail in $\piinv(\partial D$) to construct the first compressing disc $D_2$ and then construct the second compressing disc $D_1$ using two copies of $m_1$ pushed slightly to the negative side of $D$. If $\eta$ does not intersect all the monogons of $D$, then via an isotopy supported in a small neighborhood of $F_\eta$ isotope $B'_S$  so that the resulting $A_2$ intersects $D$ in one fewer component.  Since our new $B'_S$ bounds at least one monogon in $H_1$ and satisfies the conclusion of Lemma \ref{no circles}, Theorem \ref{technical} follows by induction.\qed\vskip 10pt

\newpage

\section{Towards a completely combinatorial proof}

This paper assumes that we to start out with a special triangulation on the hyperbolic 3-manifold N.  A proof of the following conjecture \ref{conjecture} is what is  needed to give a purely combinatorial proof starting with any triangulation.  

\begin{definition}  We say that the branched surface $B\subset M$ is \emph{quasi-hyperbolic} if it does not carry any sphere or torus, but fully carries a surface.  \end{definition}


\begin{example}  As demonstrated in the first paragraph of the proof of Corollary \ref{no annuli} a fully carrying $\eta$-negatively curved branched surface is quasi-hyperbolic.   \end{example}

\begin{conjecture} \label{conjecture} Let $B$ be a branched surface in the compact atoroidal 3-manifold N.  Then there exist finitely many effectively constructible  quasi-hyperbolic branched surfaces $B_1, \cdots, B_n$ such that each $B_i$ is the result of passing to a subbranched surface of some regular splitting of $B$ and every strongly irreducible or incompressible surface carried by $B$ is carried by some $B_i$.\end{conjecture}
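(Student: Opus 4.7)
The plan is to proceed by an inductive regular splitting procedure, using normal surface theory to make every step algorithmic, and using the atoroidal (and implicit irreducible) hypothesis to place spheres and tori in tractable positions.

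First, I would invoke Haken's finiteness theorem to produce the finite set of fundamental surfaces $F_1,\dots,F_k$ carried by $B$, and apply the Jaco--Oertel algorithm \cite{JO} to each $F_i$ to decide whether it is a sphere, a torus, incompressible, or strongly irreducible. All four of these questions are decidable and produce an effective decomposition of the carrying cone into the "good" surfaces (strongly irreducible or incompressible, which must have $\chi\le -2$) and the "bad" surfaces (spheres and tori). If no subbranched surface of $B$ fully carries a sphere or torus and $B$ fully carries a surface, then $B$ itself is already quasi-hyperbolic and we are done.

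Second, suppose a subbranched surface $B'\subseteq B$ fully carries a sphere $F$ or torus $F$. Using irreducibility, any carried sphere bounds a ball $V\subset N$; using atoroidality, any carried torus is compressible or boundary-parallel, so bounds a solid torus or ball $V\subset N$. In either case I would locate a sector $\sigma$ of $B'$ through which $F$ passes with positive weight, together with an embedded disc $D\subset\sigma\cap\inte V$ realizing the boundary of $V$-slicing, and use $D$ to perform a regular splitting of $B$. The splitting is chosen so that every sector of $B$ reappears in one of the split pieces (regularity of the splitting) but the fundamental surfaces of the pieces have strictly fewer representatives of the sphere/torus class $[F]$ than did $B$. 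This requires a careful combinatorial analysis, analogous to the disc-of-contact and annulus-elimination arguments in Section 5, but now driven by the compressing disc (or product structure) inside $V$ rather than by negative curvature.

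Third, to make the iteration terminate I would introduce a complexity $c(B)$ equal to the number of lattice points, up to a bounded height determined by the fundamental solutions, that correspond to sphere or torus classes carried by $B$. The key lemma would state that the splitting in the previous step strictly decreases $c$ on every resulting piece while leaving intact every strongly irreducible or incompressible surface originally carried, because such surfaces are disjoint (up to isotopy) from the splitting discs: they are neither spheres nor tori, and by Scharlemann's no-nesting \cite{Sc} and standard compressing-disc surgery they can be isotoped off the ball or solid torus $V$. Throughout the iteration I would discard any subbranched surface that fails to fully carry a surface (decidable by Jaco--Oertel), since such a piece contributes nothing to the statement and cannot be one of the output $B_i$.

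The main obstacle is the last step. Regular splittings routinely produce subbranched surfaces that fail to fully carry a surface, and keeping track of \emph{which} strongly irreducible or incompressible surface passes into \emph{which} split piece, while simultaneously driving down the sphere/torus complexity, is delicate. In the hyperbolic setting of the paper the negatively curved branched surface hypothesis together with Theorem \ref{plante} and Corollary \ref{no annuli} provides a uniform geometric control that replaces this bookkeeping; without that crutch one needs a purely combinatorial substitute, essentially a combinatorial version of the horizontally-large splitting of Proposition \ref{r splitting} valid in any atoroidal manifold. Producing that substitute, and checking that it cooperates with the sphere/torus elimination scheme above, is presumably where the real work lies, and is why the authors state this only as a conjecture.
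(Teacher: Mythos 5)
This statement is an open conjecture; the paper offers no proof, remarking only (in the Remarks following Conjecture \ref{conjecture}) that a non-effective version follows from the compactness argument in the proof of Theorem 1.3 of \cite{Li1}, and that making it effective is precisely what is missing. Your sketch has the right general shape --- split along discs supplied by carried spheres and by compressing discs of carried tori, and try to make the iteration terminate --- but two of the load-bearing steps are not established, and you are right to flag the termination step as the crux.

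First, the claim that strongly irreducible or incompressible surfaces carried by $B$ ``can be isotoped off the ball or solid torus $V$'' and thereafter remain carried by a splitting of $B$ is not justified. An ambient isotopy of an embedded surface is of course possible, but the isotoped surface generally stops being carried by $B$; to stay inside the branched surface framework you would need the isotopy to be a normal isotopy with respect to $B$, or else to produce a new splitting of $B$ that carries both the isotoped surface and everything else you have not yet processed. Scharlemann's lemma \cite{Sc} constrains how compressing discs of a strongly irreducible surface meet a ball, but it does not let you push a carried Heegaard surface off $V$ while keeping it carried. Second, the proposed complexity $c(B)$ --- a count of lattice points below some fundamental-solution height that represent sphere or torus classes --- is not shown to be well-defined, effectively computable, or monotone: a regular splitting can increase the number of fundamental solutions and hence the number of low-height sphere/torus representatives, and there is no a priori bound on how far out in the carrying cone a sphere or torus class first appears, which is precisely the uniformity the paper obtains in the hyperbolic case from Theorem \ref{plante} and Corollary \ref{no annuli}. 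A purely combinatorial replacement for that uniformity is what the conjecture asks for, and the proposal does not supply it; your closing paragraph correctly locates the gap without closing it.
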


\begin{remarks}  i)  A proof of a non effective version of this conjecture is given in the proof of Theorem 1.3 \cite{Li1}.  There the passage from $B$ to $B_1, \cdots, B_n$ is obtained via
a compactness argument.  

ii) The case when $N$ is non Haken and hyperbolic and the surface is a strongly irreducible Heegaard surface, is what is needed for this paper.
\end{remarks}


\begin{definition}  Let $X$ be a Riemannian manifold and $x\in X$.  Define the \emph{hyperbolic injectivity radius} $\hyp_x(X)$ to be  
$$\inf\{r|\textrm{ in}_*(\pi_1(B(x,r))) \textrm{ is infinite, non abelian and Gromov-hyperbolic}\}$$ 
where in$:B(x,r)\to X$ is the inclusion map or $\infty$ if the infimum does not exist.  
Define the  \emph{hyperbolic injectivity radius} of $X$ to be  $\sup\{{x\in X}|\hyp_x(X)\}.$\end{definition}


\begin{example}  As demonstrated in the first paragraph of the proof of Corollary \ref{no annuli} a fully carrying $\eta$-negatively curved branched surface is quasi-hyperbolic.   \end{example}

\begin{theorem}  \label{injectivity bound}Let $B\subset M$ be a quasi-hyperbolic branched surface, then there exists a computable $h(B)>0$ such that if $S$ is a closed surface carried by $B$, and $x\in S$, then $\hyp_x(S)\le h(B)$.  Furthermore any disc or annulus $A$ carried by $B$ with $\partial A\subset b(B)$ (the branch locus) satisfies a computable linear isoperimetric inequality.  \end{theorem}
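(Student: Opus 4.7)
The theorem splits into two assertions: a linear isoperimetric inequality for discs and annuli carried by $B$ with boundary in $b(B)$, and a bound on $\hyp_x(S)$ for any closed carried surface $S$. I would establish the isoperimetric inequality first and then use it, in combination with Theorem \ref{plante}, to derive the injectivity radius bound.

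\emph{Isoperimetric inequality.} By Haken's normal surface theory applied to $B$, I would enumerate the finite effectively computable set $F_1,\ldots,F_m$ of fundamental discs and annuli carried by $B$ with boundary in $b(B)$; every such disc or annulus $A$ with $\partial A\subset b(B)$ decomposes as a Haken sum $A = \sum n_i F_i$ with both $\area$ and $\length(\partial\,\cdot\,)$ additive. Quasi-hyperbolicity of $B$ rules out any $F_i$ being closed, since that would produce a sphere or torus carried by $B$, and consequently forces $\length(\partial F_i)>0$ for each $i$. Hence $C(B) := \max_i \area(F_i)/\length(\partial F_i)$ is finite and effectively computable, and additivity yields $\area(A)\le C(B)\,\length(\partial A)$.

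\emph{Injectivity radius bound.} Since $B$ carries no sphere or torus, any closed carried surface $S$ has genus at least $2$, so $\pi_1(S)$ is a hyperbolic surface group in which every subgroup is Gromov-hyperbolic and every infinite abelian subgroup is cyclic; the condition $\hyp_x(S)>r$ therefore reduces to $\pi_1(B(x,r))\to\pi_1(S)$ having cyclic image. Under this hypothesis $B(x,r)$ lifts isometrically to a ball in a cyclic cover $\hat S$ of $S$ (an annulus or a plane), which, viewed as a leaf of the pulled-back lamination in the corresponding cover of $M$, has area bounded above by $p(B)(r)$ via Theorem \ref{plante}. On the other hand, the isoperimetric inequality of the previous step, applied after decomposing this ball along $b(B)$ into pieces to which $C(B)$ applies, yields an exponential lower bound on its area in $r$ once $r$ exceeds a sector-dependent threshold. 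Comparing the polynomial upper bound with the exponential lower bound produces an effectively computable $h(B)$ above which the cyclic hypothesis fails, completing the bound.

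\emph{Main obstacle.} The most delicate point is that the Haken decomposition of a disc or annulus with boundary in $b(B)$ must be guaranteed to have no closed summand, and each fundamental $F_i$ must have boundary length bounded away from zero; both rely essentially on the quasi-hyperbolicity hypothesis and require effective tests for whether candidate pieces could yield spheres or tori. A secondary subtlety, in the injectivity radius step, is transferring the isoperimetric inequality from pieces cut along $b(B)$ to a general disc in the lifted surface $\hat S$, which requires a uniform bound on the total branch-curve length introduced by the cutting in terms of the disc's area; this in turn rests on the finite, effectively bounded complexity of the sectors of $B$.
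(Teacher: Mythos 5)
Your reduction of the injectivity-radius bound to the isoperimetric inequality is sound and closely follows the paper: quasi-hyperbolicity forces genus at least $2$, $\hyp_x(S)>r$ forces the image of $\pi_1(B(x,r))$ to be trivial or cyclic, the ball lifts to a disc or annulus carried by a lift of $B$, and Plante's polynomial growth bound contradicts the exponential growth coming from a linear isoperimetric inequality. The paper phrases this as ``it suffices to prove the second statement'' and invokes Gromov exactly as you do.

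The gap is in your proof of the isoperimetric inequality. You assert that every disc or annulus $A$ carried by $B$ with $\partial A\subset b(B)$ ``decomposes as a Haken sum $A=\sum n_i F_i$'' where the $F_i$ are fundamental discs and annuli, with area and boundary length additive. This fails for two separate reasons. First, Haken sum does not preserve topology: a sum of discs and annuli is generically of higher genus, and conversely a disc or annulus that happens to be a Haken sum $\sum n_i F_i$ of fundamental solutions will typically have summands $F_i$ of arbitrary topological type, not discs or annuli. There is no finitely generated cone of ``discs and annuli with boundary in $b(B)$'' closed under Haken addition, so $C(B)=\max_i \area(F_i)/\length(\partial F_i)$ over fundamental \emph{discs and annuli} does not control a general $A$. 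Second, a compact surface with boundary lying in the branch locus does not satisfy the matching (switch) equations at $b(B)$, so its weight vector does not live in the polyhedral solution cone in the first place; it is only an approximate solution, with the defect at each edge controlled by the number of times $\partial A$ crosses that edge. Your argument has no mechanism to exploit that defect.

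The paper's route is genuinely different and avoids both problems. It normalizes the cellulation of $B$, defines a linear functional $X$ on $\mathbb{R}^n$ (indexed by sectors) that computes $\chi(S)$ for closed carried surfaces, and uses quasi-hyperbolicity plus linearity to get $X\le -C_0<0$ on the projective solution polytope $P$. It then thickens $P$ to $P_{\epsilon_1}$ by replacing each matching equation $x_i+x_j-x_k=0$ with $|x_i+x_j-x_k|\le\epsilon_1$ and shows $X<-C_1<0$ persists there. For a disc or annulus $A$ with weight vector $tx$ ($t=\area(A)$, $x$ in the simplex), either $x\notin P_{\epsilon_1}$, in which case some matching equation is violated by at least $\epsilon_1$, so $\partial A$ crosses that edge at least $t\epsilon_1$ times and $\length(\partial A)\ge \epsilon_1\area(A)$; or $x\in P_{\epsilon_1}$, in which case the combinatorial Gauss--Bonnet identity $\chi(A)=X(tx)+(\text{boundary contribution})$ together with $\chi(A)\ge 0$ and $X(tx)\le -C_1 t$ forces the boundary contribution, and hence $\length(\partial A)$, to be at least $C_1\area(A)$. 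This dichotomy is the step you would need and do not have. If you want to salvage a decomposition-style argument, you would have to work with the polytope $P_{\epsilon_1}$ of approximate solutions and a functional like $X$ rather than with fundamental surfaces of a prescribed topological type.
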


\begin{remark} The condition that $B\subset M$ is equivalent to saying that $B$ comes equipped with an $I$-bundle structure.\end{remark}

\begin{proof}  We will assume that the branch locus $b(B)$ cuts $B$ into discs, for in general one can subdivide the sectors into discs and readily modify the argument below to deal with this.  Second, we will assume without loss that the length of each edge of $b(B)$ is one. the diameter of each sector $\sigma$ is bounded above by $\length(\sigma)$ and the area of each $\sigma$ is one.  Finally we will assume that if $v$ is a vertex of $\sigma$, then the interior angle of $v$ is either $\pi/2$ or $\pi$.  Note that at a vertex $w$ of $B$, six sectors locally have $w$ as a vertex.  In the natural way, the angle at $w$ of four of these sectors is equal to $\pi/2$ and the angle at the other two is equal to $\pi$. 
    
 It suffices to prove the second statement.  Indeed by Theorem \ref{plante} there is an effectively constructible polynomial that bounds the growth of any surface with base point carried by $B$.  On the other hand it follows from ideas of Gromov \cite{Gr} that the growth rate of discs and annuli with a base point that satisfies a uniform linear isoperimetric inequality is computably exponential, at least up to the distance from the basepoint to the boundary.   
 
Here is an outline of the well known non effective proof that discs satisfy a linear isoperimetric inequality, e.g. see \cite{MO}.    If not by Gromov \cite{Gr} there exist compact discs $D_1, D_2, \cdots$ carried by $B$ with $\partial D_i\subset b(B)$ such that $\ \lim_{i\to \infty} \length(\partial D_i)/\area(D_i)\to 0$.  After passing to subsequence and suitably rescaling, $\{D_i\}\to \mL$ a measured lamination with $\chi(\mL)=0$.  Next approximate $\mL$ by one with compact leaves having the same property.  A leaf of that lamination is either a sphere or a torus.

We now prove the second conclusion of the theorem.  Let $\sigma_1, \cdots, \sigma_n$ denote the sectors of $B$.  Recall that the projective solution space $\mP(B)$ of $B$ (first defined by Haken \cite{Ha} in a slightly different context) is the compact convex subset $\mP$ of the first octant of $\BR^n$ which is  the zero set of finitely many linear equations intersected with the simplex $J$ defined by $t_1x_1+\cdots t+ t_n x_n where \sum t_i=1$.  Here $x_i$ is associated to $\sigma_i$ and to each edge of $b(B)$ is associated  an equation of the form $x_i+x_j-x_k=0$. Note that there is a 1-1 correspondence between normal isotopy classes of closed surfaces carried by $B$ and non negative integral solutions and there is a 1-1 correspondence between normal isotopy classes of weight 1 measured laminations carried by $B$ and points $x\in P$.  $P$ is  a finite sided polygon and each vertex $v$ of $P$ has rational coordinates.  Minimally clearing denominators gives an integral solution corresponding to a closed surface $S_v$ carried by $B$, called a \emph{vertex surface}.  See \cite {O} for a more detailed discussion.  By replacing all these equations by inequalities $ |x_i+x_j-x_k|\le \epsilon$  and restricting to $J$ we obtain $P_\epsilon$ where $P_\epsilon\to P$ as $\epsilon \to 0$.

		
Each $\sigma_i$ has a cell structure induced from $B$.  Let $e^i$ be the set of edges of $\sigma_i$ and  $v^i_{\pi/2}$ (resp. $v^i_{\pi}$) be the set of vertices in $\partial \sigma_i$ with angle $\pi/2$ (resp. $\pi$). Define $X(\sigma_i)=1-|e^i|/2+|v^i_{\pi/2}|/4 +|v^i_{\pi}|/2$ and extend  linearly to $\BR^n$.   Note that if $S$ is a closed surface carried by $B$ with normal coordinates $(a_1, \cdots, a_n)$, then $\chi(S)=X(x)$ where  $x=a_1x_1 + \cdots| + a_nx_n$.  Since $B$ is quasihyperbolic and $X$ is linear it takes a minimum $-C_0<0$ at a vertex.  Since all equations are linear there are computable values $\epsilon_1, C_1>0$ such that for each $x\in P_\epsilon, X(x)<-C_1$.

To complete the proof, first observe that if $A$ is a disc or annulus carried by $B$ with $\partial A\subset b(B)$, then $A=tx$ for some $x\in J$ where 
$t=area(A)>1$.  If $x\notin P_{\epsilon_1}$, then there exists an edge $e$ such that $\partial A$ goes over $e$ at least $t\epsilon_1$ times and hence $
\area(A)<\epsilon_1 \length(\partial A)$. 

Now assume that $x\in P_{\epsilon_1}$.   Consider the induced cellulation on $A$.  Let $e$ be the edges of $\partial A$ and for $i\in \BN$ let $v_i$ be the vertices of $\partial A$ whose interior angle (i.e. the angle on the $A$-side) is equal  to $i(\pi/2)$.  Then 

\begin{align*} 
C_1\area(A)&=0-(-\area(A) C_1)\\
&\le \chi(A)-X(x)\\
&=-|e|/2+\sum_{i\in \BN} (1-i/4)|v_i|\le 3|v_1|/4 +|v_2|/2 + |v_3|/4\\
&\le \length(\partial A)
\end{align*}\end{proof}

		 
\appendix

\section{Mean convex MCF foliations} 


\begin{definition} 
A \emph{mean convex foliation} on a Riemannian $n$-manifold with boundary is a smooth codimension-1 foliation, possibly with singularities of standard type, such that each leaf is closed and mean convex.  
\end{definition}

In a $3$-manifold a foliation with singularities of "standard type" means that almost all leaves are completely smooth (i.e., without any singularities). In particular, any connected subset of the singular set is completely contained in a leaf.  Moreover, the entire singular set is contained in finitely many (compact) embedded Lipschitz curves with cylinder singularities together with a countable set of spherical singularities. The higher dimensions is a direct generalization of this; cf. \cite{CM8}.  

Let $M^{n+1}$ be an (orientable) $(n+1)$-manifold, $\Sigma^n\subset M$ a closed embedded minimal hypersurface with index at least one and suppose that $\Sigma$ bounds a domain $\Omega\subset M$.  Let $L$ be the second variation operator of $\Sigma$, see for instance (1.147) in \cite{CM6}, so that 
\begin{align}
L=\Delta_{\Sigma}+|A|^2+\Ric_M(\nn,\nn)\, .
\end{align}
Here $\nn$ is the unit (inward) normal to $\Sigma$.  Since $\Sigma$ has index at least one the lowest eigenvalue $\lambda$ of $L$ is negative and if $\phi$ is an eigenfunction for $L$ with eigenvalue $\lambda$, then $|\phi |>0$ (see, for instance, lemma 1.35 in \cite{CM6}) and so after possibly replacing $\phi$ by $-\phi$ we may assume $\phi>0$. By the second variation formula if $\Sigma_s=F(x,s)$ is a variation of $\Sigma=\Sigma_0$ with
\begin{align}
F_s&\perp \Sigma_s\, ,\\
F_s(\cdot,0)&=\phi\,\nn_{\Sigma}\, ,
\end{align}
then, see, for instance, theorem 3.2 in \cite{HP}, 
\begin{align}
\frac{d}{ds}_{s=0}H_{\Sigma_s}=-L\,\phi=\lambda\,\phi<0\, .
\end{align}
Here $H$ is the mean curvature scalar, that is, $\text{div}_{\Sigma_s}(\nn_{\Sigma_s})$.  It follows from this that for $s_0>0$ sufficiently small the hypersurface $\Sigma_{s_0}$ is contained in $\Omega$ and is mean convex with respect to the outward normal $-\nn_{\Sigma_{s_0}}$. 

We can now apply the results of \cite{CM8} and flow from $\Sigma_{s_0}$ to obtain the following result.


\begin{theorem} \label{mcf neighborhood} Let $N$ be a Riemannian $n$-manifold, $\Sigma$ a codimension-1 mean convex submanifold 
that is either non minimal or minimal of index $\ge 1$.  Then there exists a smooth codimension-0 submanifold $M$ bounded by $\Sigma$ 
and  a (possibly trivial) stable hypersurface $\Gamma\subset \Omega$ that has a mean convex foliation.    If $\Sigma$ is minimal, then 
such a $\Gamma$ can be chosen to either side of $\Sigma$. \end{theorem}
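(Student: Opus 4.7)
The plan is to first reduce to the strictly mean convex case using the Jacobi-eigenfunction perturbation carried out in the paragraph preceding the theorem, and then to invoke the mean curvature flow foliation construction of \cite{CM8} on the one-sided neighborhood so produced.

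First I would dispose of the non-minimal case: if $\Sigma$ is non-minimal then it is already strictly mean convex with respect to the outward normal of one of the two complementary regions, so I take $\Omega$ to be that region and set $\tilde\Sigma:=\Sigma$. In the minimal, index-$\ge 1$ case I pick a unit normal $\nn$ pointing into $\Omega$, let $\phi>0$ be a first eigenfunction of the Jacobi operator $L$ with eigenvalue $\lambda<0$, and deform $\Sigma$ normally with speed $\phi$. Since
\[
\tfrac{d}{ds}\big|_{s=0}H_{\Sigma_s}=\lambda\,\phi<0,
\]
for any sufficiently small $s_0>0$ the hypersurface $\tilde\Sigma:=\Sigma_{s_0}$ lies in $\Omega$, bounds a subregion $\tilde\Omega\subset\Omega$, and is strictly mean convex with respect to its outward normal $-\nn_{\Sigma_{s_0}}$.

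Next I would run mean curvature flow starting from $\tilde\Sigma$ inside $\tilde\Omega$. Strict mean convexity is preserved, so the arrival time function $u$ is well defined on $\tilde\Omega$, and its level sets foliate $\tilde\Omega$ by mean convex leaves. The main results of \cite{CM8} supply exactly the structural conclusions I need: the leaves are smooth off a singular set of \emph{standard type} (finitely many compact Lipschitz cylindrical arcs, each contained in a leaf, together with a countable set of isolated spherical singularities), and the flow terminates on a closed subset $\Gamma\subset\tilde\Omega$ that is either empty or a smooth stable minimal hypersurface. I take $M$ to be $\tilde\Omega$ in the non-minimal case; in the minimal case I enlarge $M$ by the collar $\{\Sigma_s\,:\,0\le s\le s_0\}$ from the first step, which is itself foliated by the $\Sigma_s$ with $\Sigma$ as a boundary leaf, and glue the two foliations along the shared smooth leaf $\tilde\Sigma$. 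The resulting foliation of $M$ is mean convex, has $\Sigma$ as a boundary component, and limits onto the stable $\Gamma$. For the final sentence of the theorem, if $\Sigma$ is minimal then both complementary regions are available, and applying the construction with $+\phi\,\nn$ on one side and $-\phi\,\nn$ on the other produces mean convex foliations and stable limits to either side.

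The hardest step is clearly Step~2, but it is done entirely by citation rather than by new argument: preservation of strict mean convexity under MCF, the regularity and structure of the arrival-time foliation, and convergence to a stable minimal limit are all established in \cite{CM8}, building on level-set flow, Brakke regularity and cylindrical singularity analysis. The eigenfunction perturbation in Step~1 is elementary and already worked out in the paragraphs preceding the theorem. The only remaining subtlety is the gluing of the eigenfunction collar $\{\Sigma_s\}_{s\le s_0}$ to the MCF foliation into a single foliation of standard type; since the two families share the smooth leaf $\tilde\Sigma$ with compatible transverse orientation and vanishing relative singular set near $\tilde\Sigma$, this gluing is automatic.
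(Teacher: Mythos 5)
Your proposal is correct and is essentially the paper's own proof, which is a one-line appeal to the same two ingredients: the eigenfunction collar $\{\Sigma_s\}_{0\le s\le s_0}$ from the paragraph preceding the theorem, followed by the mean curvature flow foliation of \cite{CM8} from $\Sigma_{s_0}$ down to the stable limit $\Gamma$. You simply spell out the non-minimal case and the ``either side'' clause, which the paper leaves implicit.
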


\begin{proof}  Between $\Sigma$ and $\Sigma_{s_0}$ the mean convex hypersurfaces $\Sigma_s$ foliates and between $\Sigma_{s_0}$ and $\Gamma$ MCF gives a mean convex foliation.  \end{proof}


Using results announced in \cite{PR} (see also \cite{Ru} and compare with \cite{Ke}) we have the following result.  

\begin{theorem} \label {mcf foliation} Any closed orientable bumpy\footnote{Recall that bumpy means that there are no closed immersed minimal surfaces with a nontrivial Jacobi field.  By a result of Brian White such metrics are generic.} Riemannian 3-manifold M with a strongly irreducible Heegaard splitting, supports a mean convex foliation.\end{theorem}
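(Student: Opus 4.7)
My plan is to realize the Heegaard surface as a minimal representative, apply Theorem \ref{mcf neighborhood} near it to get a local mean convex foliation, and then extend this foliation into each handlebody by mean curvature flow with only standard-type singularities, invoking the MCF machinery of \cite{CM8}.

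First, using the results announced in \cite{PR}, any strongly irreducible Heegaard surface $\Sigma$ is isotopic to either an embedded minimal surface of index $\le 1$, or to the surface obtained by tubing a one-sided index-$0$ minimal surface along its double cover. In the two-sided case, the bumpy hypothesis rules out a nontrivial Jacobi field on $\Sigma$, and the mountain-pass/min-max structure of a strongly irreducible splitting (cf.\ \cite{Ru}, \cite{Ke}) excludes a stable representative, so the minimal representative has index exactly one with trivial Jacobi kernel. Theorem \ref{mcf neighborhood} then applies on each side of $\Sigma$, yielding two regions $R_1 \subset H_1$ and $R_2 \subset H_2$ mean-convexly foliated from $\Sigma$ down to (possibly empty) stable minimal surfaces $\Gamma_1$ and $\Gamma_2$ in the respective handlebodies $H_1, H_2$.

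Second, I would extend the foliations from $\Gamma_i$ across the rest of each handlebody $H_i$ by continuing the mean curvature flow started in the proof of Theorem \ref{mcf neighborhood}. By the analysis of \cite{CM8}, mean-convex MCF develops only cylindrical and spherical singularities---precisely the standard-type singularities permitted in the definition of a mean convex foliation. The bumpy hypothesis ensures that any minimal surface of index $\ge 1$ encountered along the flow can be crossed by a second application of Theorem \ref{mcf neighborhood}, and that no one-parameter family of minimal surfaces obstructs the flow; the global topology of $H_i$ (a handlebody) then forces the flow to extinguish onto a spine $1$-complex of $H_i$, yielding a mean convex foliation whose singular set is of standard type.

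For the one-sided case, where $\Sigma$ is the tube-double of a one-sided index-$0$ minimal surface $S$, I would first foliate the twisted $I$-bundle neighborhood $N(S)$ by surfaces equidistant from $S$, made mean convex by a small bumpy perturbation of $S$ (bumpy again ruling out Jacobi fields on $S$), and then foliate the exterior $M \setminus N(S)$---a handlebody with mean convex boundary---as in the two-sided case. The hard part will be the second step: confirming rigorously that MCF extends the mean convex foliation through each full handlebody with only standard singularities. Here bumpiness is essential for preventing the flow from becoming trapped at an intermediate stable minimal surface, and the detailed argument will require the full strength of \cite{CM8}, suitably combined with the sweepout control coming from \cite{PR}, \cite{Ru}, and \cite{Ke}.
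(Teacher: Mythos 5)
Your proposal starts the same way the paper does (use \cite{PR} to replace the Heegaard surface by a minimal representative, then apply Theorem~\ref{mcf neighborhood} on both sides), but it goes wrong at the crucial step of continuing across the handlebodies.

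You write that after the MCF from $\Sigma$ reaches the stable surfaces $\Gamma_i$, you would ``extend the foliations from $\Gamma_i$ across the rest of each handlebody $H_i$ by continuing the mean curvature flow,'' and that ``the global topology of $H_i$ (a handlebody) then forces the flow to extinguish onto a spine.''  This is not how mean convex MCF behaves.  A stable minimal surface is a barrier for the flow: once the flow converges to $\Gamma_i$, there is no flow to continue.  Moreover, bumpiness does not prevent a stable minimal surface inside $H_i$ from trapping a subsequent flow --- a stable minimal surface with no Jacobi field is perfectly generic and can certainly sit strictly inside a handlebody.  So the step ``bumpiness is essential for preventing the flow from becoming trapped at an intermediate stable minimal surface'' has no justification, and the picture of the flow collapsing to a spine is simply false in general.

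The idea you are missing is that the argument must be iterative, with a separate existence input at each stage.  After the first flow produces a handlebody (or, later, a product $S\times I$) with \emph{stable} mean convex boundary, one does not flow again; one instead invokes the \emph{relative} min-max of \cite{PR}, \cite{Ru}: inside such a handlebody the standard Heegaard splitting is strongly irreducible, so the interior contains a minimal surface $\Sigma_1$ isotopic to the boundary.  If $\Sigma_1$ is unstable, apply Theorem~\ref{mcf neighborhood} to both its sides, getting a smaller mean convexly foliated piece $M_1$ whose complementary pieces are again handlebodies or products with stable boundary, and repeat.  Termination is not topological at all; it comes from the fact that, for each genus $g$, a bumpy $3$-manifold contains only finitely many minimal surfaces of genus $\le g$, so the sequence of distinct minimal surfaces produced by the iteration must be finite.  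Without this iterate-and-count structure, the proof does not close.

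Two smaller points.  First, you try to exclude a stable two-sided representative via the ``mountain-pass/min-max structure''; the paper doesn't need this, since a stable two-sided $\Sigma$ already reduces immediately to two handlebodies with stable boundary and the iteration starts there.  Second, in the one-sided case the equidistant surfaces in the twisted $I$-bundle over $S$ are not automatically mean convex, and ``a small bumpy perturbation of $S$'' is not a well-defined operation; in the paper one instead observes directly that the complement of the stable one-sided $\Sigma$ is a single handlebody with stable boundary and feeds that into the same iteration.
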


\begin{proof}  By \cite{PR} $M$ either has a minimal strongly irreducible Heegaard surface $\Sigma$ or a stable 1-sided surface also called $\Sigma$ such that the closed complement of $\Sigma$ is either two or one handlebodies, where one occurs exactly in the 1-sided case.  If $\Sigma$ is of index $\ge 1$, then applying the proof of Theorem \ref{mcf neighborhood} to both sides of $\Sigma$ give a compact submanifold $M_0$ whose boundary consists of $\le 2$ components each of which is a stable minimal surface that bounds a handlebody disjoint from $M_0$.  

We have thus reduced to the case that $M$ is a Riemannian handlebody (possibly a 3-ball) with stable boundary.  In what follows we also need to consider the case that $M$ is topologically a product $S\times I$, with stable boundary.  In the former case the standard Heegaard splitting is strongly irreducible, so again the relative version of \cite{PR} described in the sketch of Theorem 1.8 \cite{Ru} shows that $\inte(M)$ supports a minimal Heegaard surface $\Sigma_1$ isotopic to $\partial M$.  In the latter case the same argument shows that $\inte(M)$ supports a minimal surface $\Sigma_1$ isotopic to $S\times 1$.  In either case, if $\Sigma_1$ is index $\ge 1$, then $\Sigma_1$ lies interior to a manifold $M_1$ with a mean convex foliation such that each component of $\partial M_1$ is stable.  Furthermore, each component of the closed complement of $M_1$ in $M$ is either a handlebody or a product.  Since for any $g$, there are only finitely many minimal surfaces in $M$ of genus $\le g$, the result follows.\end{proof}


The proof of Theorem \ref{mcf foliation} shows the following.

\begin{corollary}  
Let $M$ be a compact bumpy Riemannian 3-manifold with mean convex boundary.  If $\{S_i\}$ is a maximal collection of pairwise disjoint minimal surfaces such that each component of the closed complement has a strongly irreducible Heegaard splitting, then $\{S_i\}$ extends to a mean convex foliation on $M$.
\end{corollary}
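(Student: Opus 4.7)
The plan is to run the argument of Theorem \ref{mcf foliation} separately inside each component of the complement $M \setminus \bigcup_i S_i$, using the maximality of $\{S_i\}$ to force the relevant intermediate minimal Heegaard surfaces to have index $\geq 1$, and then to apply Theorem \ref{mcf neighborhood} followed by iteration on the resulting handlebodies and products.

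Fix a component $C$ of $M \setminus \bigcup_i S_i$. Its closure $\bar C$ is a compact bumpy Riemannian 3-manifold, and its boundary is mean convex: the portion of $\partial \bar C$ lying in $\partial M$ is mean convex by hypothesis, while the portion lying in some $S_i$ is minimal and hence mean convex from the $\bar C$-side. By hypothesis $\bar C$ admits a strongly irreducible Heegaard splitting, so by the relative Pitts--Rubinstein statement used in Theorem \ref{mcf foliation} (see \cite{PR}, \cite{Ru}), there exists a minimal surface $\Sigma_C$ in $\inte(\bar C)$ representing this splitting, which is either a 2-sided Heegaard surface or a stable 1-sided surface whose associate is the Heegaard surface.

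The key use of maximality is to rule out the stable cases. If $\Sigma_C$ is stable (either 2-sided of index $0$ or 1-sided), then $\{S_i\} \cup \{\Sigma_C\}$ is a strictly larger collection of pairwise disjoint minimal surfaces. The new complementary components consist of the old components other than $C$, together with the two compression bodies (or the single twisted compression body, in the 1-sided case) obtained from $\bar C$ by cutting along $\Sigma_C$; each of the latter inherits a strongly irreducible Heegaard splitting from the original splitting of $\bar C$ realized by $\Sigma_C$. This contradicts maximality. Hence $\Sigma_C$ is 2-sided of index $\geq 1$. Theorem \ref{mcf neighborhood} applied to both sides of $\Sigma_C$ in $\bar C$ now produces a compact submanifold $M^C_1 \subset \bar C$ containing $\Sigma_C$, bounded by stable minimal hypersurfaces, and carrying a mean convex foliation. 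Each component of $\bar C \setminus M^C_1$ is a handlebody or a product $\Sigma \times I$ with stable minimal boundary; these carry strongly irreducible standard splittings, so the iteration in the proof of Theorem \ref{mcf foliation} applies, and the process terminates because there are only finitely many minimal surfaces in $M$ of genus bounded by the Heegaard genus of $\bar C$. Gluing the resulting mean convex foliations over the $S_i$ produces the desired mean convex foliation on $M$ having every $S_i$ as a leaf.

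The main obstacle is the application of maximality in Step 2: one must verify that if the minimal Heegaard surface $\Sigma_C$ happens to be stable, then adjoining it to $\{S_i\}$ preserves the hypothesis that every complementary component has a strongly irreducible Heegaard splitting. This reduces to the purely topological statement that cutting a 3-manifold along a strongly irreducible Heegaard surface produces compression bodies whose positive boundaries are Heegaard surfaces realizing strongly irreducible splittings in their own right, which should follow by standard compression body arguments. The remaining pieces of the argument (existence of $\Sigma_C$, the mean convex neighborhood flow, finiteness-based termination of the iteration) are already packaged in Theorems \ref{mcf neighborhood} and \ref{mcf foliation}.
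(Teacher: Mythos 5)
Your reading of the paper's one-line proof (``The proof of Theorem~\ref{mcf foliation} shows the following'') is the natural one, and the piece-by-piece iteration you describe is correct in outline. Your identification of the key topological fact --- that cutting along a strongly irreducible Heegaard surface yields compression bodies which themselves carry strongly irreducible (in fact vacuously so, since the collar side $\partial_+V\times I$ has no meridian discs) splittings --- is right, and matches how the proof of Theorem~\ref{mcf foliation} already treats handlebodies and products.

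The one step I would press you on is the maximality argument itself, because as written it proves more than you want. Your argument ``$\Sigma_C$ stable $\Rightarrow$ adjoin $\Sigma_C$ $\Rightarrow$ new pieces are compression bodies with strongly irreducible splittings $\Rightarrow$ contradiction'' uses nothing about stability: cutting $\bar C$ along \emph{any} minimal Heegaard surface in its interior produces two compression bodies carrying strongly irreducible splittings, so the identical reasoning would rule out an \emph{unstable} $\Sigma_C$ as well, and likewise would rule out the stable surfaces $\Gamma^C_j$ produced by the flow in Theorem~\ref{mcf neighborhood} (adjoining them leaves $M^C_1$, which inherits a strongly irreducible splitting from $\bar C$, plus handlebodies/products outside). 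Pushed to its conclusion this says there can be no new minimal surface at all in $\mathrm{int}(\bar C)$, which sits uneasily with the relative Pitts--Rubinstein existence statement you also invoke. The way out is that the min-max surface in a piece with stable minimal boundary need not be a genuinely new surface disjoint from $\{S_i\}$ --- it may degenerate to $\partial\bar C$, possibly with multiplicity --- and maximality is precisely what forces this degeneration, so that the mean convex flow issuing from the $S_i$ (and from $\partial M$) already exhausts each $\bar C$. This is closer to the role the hypothesis actually plays than ``rule out stable $\Sigma_C$ but allow unstable $\Sigma_C$.'' So the architecture of your proof (apply Theorem~\ref{mcf neighborhood}, iterate on handlebodies and products, terminate by finiteness of bounded-genus minimal surfaces, glue over the $S_i$) is sound and is what the paper intends, but the maximality step needs to be recast: rather than selecting the index of $\Sigma_C$, it controls what the flow's limiting stable surfaces can be, and the dichotomy you should be exploiting is ``new minimal surface in $\mathrm{int}(\bar C)$ versus degeneration to $\partial\bar C$,'' not ``stable versus unstable.''
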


\begin{remark}  
The hypothesis holds if the closed complement of $\{S_i\}$ in $M$ consists of handlebodies and products.
\end{remark}

\enddocument